\theoremstyle{plain}
\newtheorem{thm}{Theorem}[section]
\newtheorem{prop}[thm]{Proposition}
\newtheorem{claim}[thm]{Claim}
\newtheorem{cor}[thm]{Corollary}
\newtheorem{lem}[thm]{Lemma}
\newtheorem{obs}[thm]{Observation}
\newtheorem{quest}[thm]{Question}
\newtheorem{conj}[thm]{Conjecture}
\theoremstyle{definition}
\title{On the Packing/Covering Conjecture of Infinite Matroids}
\author{Attila Jo\'{o}}
\thanks{The author would like to thank the generous support of the Alexander 
von Humboldt Foundation and NKFIH 
OTKA-129211}
\address{Attila Jo\'{o},
University of Hamburg, Department of Mathematics, Bundesstra{\ss}e 55 (Geomatikum), 20146 Hamburg, Germany}
\email{attila.joo@uni-hamburg.de}
\address{Attila Jo\'{o},
Alfr\'{e}d R\'{e}nyi Institute of Mathematics, Set theory and general topology research division, 13-15 Re\'{a}ltanoda St., 
Budapest, Hungary}
\email{jooattila@renyi.hu}
\keywords{infinite matroid, packing, covering, matroid intersection}
\subjclass[2020]{Primary: 05B35, 05B40}
\begin{document}

\begin{abstract}
The Packing/Covering Conjecture was introduced by  Bowler and  Carmesin  motivated by the Matroid Partition Theorem 
by Edmonds and Fulkerson. A packing for a family $ (M_i: i\in\Theta) $ of matroids on the common 
edge set $ E $ is a system $ (S_i: i\in\Theta ) $ of pairwise disjoint subsets of $ E $ where $ S_i $ is panning in $ M_i $. Similarly, 
a covering is a system $ (I_i: i\in\Theta ) $ with  $\bigcup_{i\in\Theta} I_i=E $ where $ I_i $ is independent in $ M_i $. The 
conjecture states that for every matroid family on $ E $ there is a partition $E=E_p \sqcup E_c$ such that 
$ (M_i \upharpoonright E_p: i\in \Theta) $ admits a packing and $ (M_i.  E_c: i\in \Theta) $ admits a covering. We prove the 
special case where $ E $ is countable and each $ M_i $ is either finitary or cofinitary. The connection between packing/covering 
and  matroid intersection problems discovered by Bowler and Carmesin can 
be established for every well-behaved matroid class. This makes possible to approach the problem from the direction of matroid 
intersection. We show that the generalized version of   Nash-Williams' Matroid Intersection Conjecture  holds for  
countable matroids having only finitary and cofinitary 
components.  
\end{abstract}

\maketitle
\section{Introduction}
Rado asked in 1966 (see Problem P531 in \cite{rado1966abstract}) if it is possible to extend the concept of matroids to infinite 
without loosing  
duality and minors. Based on the works of Higgs (see \cite{higgs1969matroids}) and  
Oxley  (see \cite{oxley1978infinite} and \cite{oxley1992infinite}) Bruhn, Diestel, Kriesell, Pendavingh and Wollan  
settled Rado's problem affirmatively in \cite{bruhn2013axioms} by finding a
set of cryptomorphic axioms for infinite matroids, generalising the usual independent set-, bases-, circuit-, closure- and 
rank-axioms for finite mastoids. Higgs named originally these structures B-matroids to distinguish from the original concept. 
Later this terminology vanished and in the context of infinite combinatorics B-matroids are referred as matroids  and the term 
`finite matroid' is used to 
differentiate.

An $ M=(E, \mathcal{I}) $ is a matroid  if $ \mathcal{I}\subseteq \mathcal{P}(E) $ with
\begin{enumerate}
[label=(\arabic*)]
\item\label{item axiom1} $ \varnothing\in  \mathcal{I} $;
\item\label{item axiom2} $ \mathcal{I} $ is downward closed;
\item\label{item axiom3} For every $ I,J\in \mathcal{I} $ where  $J $ is $ \subseteq $-maximal in $ \mathcal{I} $ but $ I $ is 
not, there exists an $  e\in J\setminus I $ such that
$ I+e\in \mathcal{I} $;
\item\label{item axiom4} For every $ X\subseteq E $, any $ I\in \mathcal{I}\cap 
\mathcal{P}(X)  $ can be extended to a $ \subseteq $-maximal element of 
$ \mathcal{I}\cap \mathcal{P}(X) $.
\end{enumerate}

If $ E $ is finite, then \ref{item axiom4} is redundant and \ref{item axiom1}-\ref{item axiom3} is one of the usual 
axiomatizations 
of finite matroids. One can show that every dependent set in an infinite matroid contains a minimal dependent set which is called 
a circuit. Before Rado's program 
was settled, a more restrictive axiom was used as a replacement of \ref{item axiom4}:

\begin{enumerate}[label={(4')}]
\item\label{item axiom4'} If all the finite subsets of an $ X\subseteq E $ are in $ \mathcal{I} $, then $ X\in \mathcal{I} $.
\end{enumerate}

The implication \ref{item axiom4'}$ \Longrightarrow $\ref{item axiom4} follows directly from Zorn's lemma thus  axioms 
\ref{item axiom1}, \ref{item axiom2}, \ref{item axiom3} and \ref{item axiom4'}  describe a subclass  $ \mathfrak{F} $ of the 
the matroids. This $ \mathfrak{F} $ consists of the matroids having  only finite circuits and called the class of \emph{finitary} 
matroids. Class $ \mathfrak{F} $ is closed under several important operations like direct sums and taking minors but not under 
taking 
duals which was the main motivation of Rado's program for looking for a more general matroid concept. The class $ 
\mathfrak{F}^{*} $ of the duals of the matroids in $ \mathfrak{F} $ 
consists of the \emph{cofinitary} matroids, i.e. matroid whose all cocircuits are finite. In order to being closed under all  
matroid operations we need, we work with the class $ 
\mathfrak{F}\oplus \mathfrak{F}^{*} $  of matroids having only finitary and cofinitary components, equivalently that are the 
direct sum of a finitary and cofinitary matroid.

Matroid union is a fundamental concept in the theory of finite matroids. For a finite family $ (M_i: i\leq n) $ of matroids on a 
common finite edge set $ E $ one can define a matroid $ \bigvee_{i\leq n}M_i $ on $ E $ by letting $ I\subseteq E $ be 
independent in $ \bigvee_{i\leq n}M_i $ if  $ I=\bigcup_{i\leq n}I_i $ for suitable $ I_i\in \mathcal{I}_{M_i} $ (see 
\cite{edmonds1968matroid}). This 
phenomenon fails for  infinite families of finitary matroids. Indeed, let $ E $ be uncountable and let $ M_i $ be the $ 1 
$-uniform 
matroid on $ E $ for $ i\in \mathbb{N} $. Then exactly the countable subsets of $ E $ would be independent in 
$ \bigvee_{i\in \mathbb{N}}M_i $ and hence there would be no maximal independent set contradicting 
\ref{item axiom4}.\footnote{For a finite family of finitary matroids the union operation  results in a matroid (Proposition 4.1 
in \cite{aigner2018intersection}).} Even so, 
Bowler and Carmesin observed (see section 3 in \cite{bowler2015matroid}) that the rank formula in the Matroid Partition 
Theorem by Edmonds and Fulkerson (Theorem 13.3.1 in \cite{frank2011connections}), namely:
\[ r\left( \bigvee_{i\leq n}M_i \right) =\max_{I_i\in \mathcal{I}_{M_i}}\left|\bigcup_{i\leq n}I_i\right|=\min_{E=E_p\sqcup 
E_c}\left|E_c\right|+\sum_{i\leq n}r_{M_i}(E_p),\] 
can be interpreted in infinite setting via the complementary slackness conditions. In the minimax formula above there is equality 
for the family $ (I_i: i\leq n) $  and partition $ E=E_p\sqcup E_c $ iff  
\begin{itemize}
\item $ I_i $ is independent in $ M_i $,
\item $\bigcup_{i\leq n} I_i\supseteq E_c $,
\item $ I_i\cap E_p $ spans $ E_p $ in $ M_i $ for every $ i $,
\item  $ I_i\cap I_j\cap E_p=\varnothing $ for $ i\neq j$.
\end{itemize}
 Bowler and Carmesin conjectured that for 
every family $\mathcal{M}:= (M_i: i\in\Theta) $  of matroids on a common edge set $ E $ there is a family $ (I_i: i\in \Theta) $  
and partition $ 
E=E_p\sqcup E_c $ satisfying 
the conditions above. To explain the name ``Packing/Covering Conjecture'' let us provide an alternative formulation.  A 
\emph{packing} for $ \mathcal{M} $ is a system $ (S_i: i\in\Theta ) $ of pairwise disjoint subsets of $ E $ where $ S_i $ is 
spanning in $ M_i $. Similarly, a \emph{covering} for $ \mathcal{M} $  is a  system $ (I_i: i\in\Theta ) $ with  
$\bigcup_{i\in\Theta} I_i=E $ where $ I_i $ is independent in $ M_i $. 

\begin{conj}[Packing/Covering, Conjecture  1.3 in \cite{bowler2015matroid} ]\label{conj: Pack/Cov}
For every family $ (M_i: i\in \Theta) $ of matroids on a common edge set $ E $  there is a partition $E=E_p \sqcup E_c$ in such a 
way that $ (M_i \upharpoonright E_p: i\in \Theta) $ admits a packing and $ (M_i.  E_c: i\in \Theta) $ admits a covering. 
\end{conj}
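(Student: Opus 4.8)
The plan is to route the problem through infinite matroid intersection. First I would invoke the correspondence of Bowler and Carmesin \cite{bowler2015matroid} — valid, as the introduction indicates, for every well-behaved matroid class, the class of all matroids included — which reduces the Packing/Covering Conjecture for an arbitrary family $\mathcal M=(M_i:i\in\Theta)$ on $E$ to the generalized Matroid Intersection Conjecture for a suitable pair $(N_1,N_2)$ of matroids on a common ground set. Concretely, put $\hat E:=\bigsqcup_{i\in\Theta}E_i$ with one copy $E_i$ of $E$ per index and projection $\pi\colon\hat E\to E$, let $N_1:=\bigoplus_{i\in\Theta}M_i$ with $M_i$ carried by $E_i$, and let $N_2$ be a partition/transversal-type matroid on $\hat E$ encoding the disjointness constraint ``$I_i\cap I_j\cap E_p=\varnothing$''. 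A common independent set $\hat I$ of $N_1$ and $N_2$ together with a partition $\hat E=\hat X_1\sqcup\hat X_2$ such that $\hat I\cap\hat X_j$ spans $\hat X_j$ in $N_j$ for $j\in\{1,2\}$ — exactly what the Matroid Intersection Conjecture promises — then projects to a witness $(I_i)_{i\in\Theta}$, with $I_i:=\pi(\hat I\cap E_i)$, and a partition $E=E_p\sqcup E_c$ satisfying the complementary-slackness list of the introduction. Some care is needed to make $\hat X_1$ and $\hat X_2$ unions of whole $\pi$-fibres; arranging this is precisely part of what ``well-behavedness of the ambient class'' is meant to deliver. I would also record that the statement is self-dual — the conjecture for $\mathcal M$ with parts $(E_p,E_c)$ is equivalent to the conjecture for $(M_i^{*}:i\in\Theta)$ with $E_p$ and $E_c$ interchanged — which halves the casework and fits with working inside a class closed under duals.

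Second, I would prove the generalized Matroid Intersection Conjecture of Nash-Williams in full: for any two matroids $N_1,N_2$ on a ground set $X$ there are $\hat I\in\mathcal I_{N_1}\cap\mathcal I_{N_2}$ and a partition $X=X_1\sqcup X_2$ with $\hat I\cap X_j$ spanning $X_j$ in $N_j$ for $j\in\{1,2\}$. Feeding this into the first step yields the Packing/Covering Conjecture for every family on every ground set. The mechanism I would try to push is the one that succeeds in the countable case with finitary-or-cofinitary components: fix a well-order of $X$ and build $\hat I$ together with the partition by transfinite recursion, maintaining a ``wave''-type invariant — a coherent family of partial augmenting structures that can always be continued — so that the limit objects are simultaneously a common independent set and a certificate for the partition.

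The genuine obstacle, and the reason this is a plan rather than a theorem, is the second step: the infinite Matroid Intersection Conjecture is open in general — even over a countable ground set — and the results of this paper resolve it only for countable matroids all of whose components are finitary or cofinitary. The wave/augmenting-path machinery leans essentially on that hypothesis: it is what keeps the relevant augmentations finitely controllable and the objects under construction matroidal, and removing it appears to require a genuinely new idea. The tempting shortcut of dispensing with intersection and building $E=E_p\sqcup E_c$ directly by transfinitely exhausting $E$ collides head-on with the pathology recalled in the introduction — the union of infinitely many matroids need not be a matroid, witness the $1$-uniform example — so the naive greedy step of dropping a fresh edge into $E_c$ and into some $I_i$ cannot be stabilised at limit stages without exactly the global augmenting structure that matroid intersection supplies. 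I would therefore expect essentially all of the difficulty to lie in unrestricted matroid intersection, and I would not expect a short proof.
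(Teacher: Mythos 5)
Your proposal is not a proof, and you say so yourself: the entire weight of the argument rests on your ``second step,'' namely proving the Generalized Matroid Intersection Conjecture for arbitrary pairs of matroids, which is open. That is the genuine gap. Note that the statement you were asked about is stated in the paper as a \emph{conjecture} (Conjecture \ref{conj: Pack/Cov}); the paper does not prove it in general either, but only the special case where $E$ is countable and every $M_i$ lies in $\mathfrak{F}\oplus\mathfrak{F}^{*}$ (Theorem \ref{t: main result0}). Your first step --- passing to $\hat E=E\times\Theta$, taking $N_1=\bigoplus_i M_i'$ and letting $N_2$ be the partition matroid $\bigoplus_{e}U_{\{e\}\times\Theta,1}$ enforcing disjointness, then reading off $E_p$ and $E_c$ from the intersection partition --- is essentially the paper's own reduction in Lemma \ref{l: M finitary}, with two caveats: the paper first reduces to countable $\Theta$ so that $\hat E$ stays countable and $N_1$ stays in $\mathfrak{F}\oplus\mathfrak{F}^{*}$, and it does not (and need not) arrange $\hat X_1,\hat X_2$ to be unions of fibres; instead it sets $E_c:=\{e:(\exists i)\,(e,i)\in E_N\}$ and verifies the packing and covering properties directly. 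So your reduction is sound in spirit but the ``well-behavedness'' you invoke to fix the fibres is not actually how the bookkeeping is done.

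Beyond the admitted openness of unrestricted matroid intersection, you should be warier of the suggestion that a transfinite wave/augmenting-path recursion will ``prove the generalized Matroid Intersection Conjecture in full.'' The paper explicitly recalls (Theorem 1.5 of \cite{erde2019base}) that the failure of a closely related statement for arbitrary matroids is \emph{consistent with ZFC}, which it offers as grounds for scepticism about the provability of Conjecture \ref{conj: Pack/Cov} in general; at minimum this tells you that any such recursion must exploit structural hypotheses (countability, finitary/cofinitary components) in an essential way, exactly as the paper's mixed augmenting-path machinery does. Also, a small inaccuracy: the (non-generalized) Nash-Williams intersection conjecture for countable ground sets is not open --- it is proved in \cite{joo2020MIC}; what remains open over countable ground sets is the generalized version for arbitrary matroids.
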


We shall prove the following special case of the Pacing/Covering Conjecture \ref{conj: Pack/Cov}: 

\begin{restatable}{thm}{PC}\label{t: main result0}
  For every family $ (M_i: i\in \Theta) $ of matroids on a common countable edge set $ E $ where $ M_i \in 
  \mathfrak{F}\oplus \mathfrak{F}^{*} $,  there is a partition $E=E_p \sqcup E_c$ such 
  that $ (M_i \upharpoonright E_p: i\in \Theta) $ admits a packing and $ (M_i.  E_c: i\in \Theta) $ admits a covering.   
\end{restatable}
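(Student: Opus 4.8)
The plan is to reduce the Packing/Covering problem to a matroid intersection problem, exploit the connection discovered by Bowler and Carmesin, and then solve the resulting intersection instance using the fact that the Nash--Williams Matroid Intersection Conjecture holds in the class $\mathfrak{F}\oplus\mathfrak{F}^{*}$ over countable ground sets. First I would recall the Bowler--Carmesin translation: given the family $(M_i:i\in\Theta)$ on $E$, one forms a single ``sum-like'' matroid $N_1$ on a suitable ground set obtained by taking (a carefully chosen version of) the direct sum $\bigoplus_{i\in\Theta}M_i$ together with a second matroid $N_2$ encoding the constraint that in each fibre over $e\in E$ at most one copy of $e$ is used; a common independent set, respectively a common spanning set, of $N_1$ and $N_2$ then decodes into the covering family $(I_i)$, respectively the packing family $(S_i)$, on the two sides of the partition $E=E_p\sqcup E_c$. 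The key point, which the excerpt flags as working ``for every well-behaved matroid class,'' is that this construction keeps us inside $\mathfrak{F}\oplus\mathfrak{F}^{*}$ and inside the countable regime, so both auxiliary matroids are again countable matroids with only finitary and cofinitary components.

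The second main step is to establish the generalized Nash--Williams Matroid Intersection Conjecture for two countable matroids $N,N'$ in $\mathfrak{F}\oplus\mathfrak{F}^{*}$: there exist a common independent set $I$ and a partition of the ground set into $I$-good and $I'$-good parts witnessing complementary slackness (the ``structural'' form of intersection, not merely equality of a min and a max, which is what is needed to feed back into packing/covering). Here I would decompose each matroid into its finitary part and its cofinitary part, handle the purely finitary $\times$ finitary case (essentially known via the work of Aigner-Horev, Carmesin and Fröhlich on countable matroid intersection), dualize to treat the cofinitary $\times$ cofinitary case, and then deal with the genuinely mixed interactions by an augmenting-path argument. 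The countability of $E$ is used to run a transfinite (in fact $\omega$-length) construction: one enumerates the ground set and alternately corrects ``covering defects'' and ``packing defects,'' using the circuit-finiteness on the finitary side and cocircuit-finiteness on the cofinitary side to guarantee that each augmenting step is finite and that the limit object satisfies axiom \ref{item axiom4}, i.e. that maximal independent sets actually exist where the construction needs them.

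I expect the main obstacle to be precisely the mixed case in the intersection argument --- controlling augmenting paths that alternate between a finitary component (where circuits are finite but cocircuits may be infinite) and a cofinitary component (vice versa). In the purely finitary or purely cofinitary worlds the standard finite augmenting-path machinery localizes nicely, but when a path threads through both, neither circuits nor cocircuits are uniformly finite, so one must argue that the relevant ``exchange chains'' still terminate, or else set up a limit/compactness argument that respects both axiom \ref{item axiom3} and axiom \ref{item axiom4}. I would address this by working component-wise: since every $M_i$ splits as a direct sum of a finitary and a cofinitary matroid, the whole intersection instance splits into four independent sub-instances over the four combinations of component types, three of which are handled by known results or duality, leaving only finitary-against-cofinitary, where the two ground sets of the non-trivial interaction are disjoint and the problem collapses to a transversal-type statement that can be settled directly. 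Assembling the four pieces and pushing the structural intersection result back through the Bowler--Carmesin correspondence then yields the desired partition $E=E_p\sqcup E_c$ and completes the proof of Theorem~\ref{t: main result0}.
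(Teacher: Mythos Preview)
Your high-level plan --- reduce Packing/Covering to a two-matroid intersection instance via the Bowler--Carmesin translation, then prove intersection in $\mathfrak{F}\oplus\mathfrak{F}^{*}$ over a countable ground set --- matches the paper's architecture. However, the heart of your proposal, the decomposition of the intersection problem ``into four independent sub-instances over the four combinations of component types,'' does not work, and this is a genuine gap rather than a detail.

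Write $E=E_M^{\mathrm{fin}}\sqcup E_M^{\mathrm{cof}}$ for the partition induced by the components of $M$, and similarly $E=E_N^{\mathrm{fin}}\sqcup E_N^{\mathrm{cof}}$ for $N$. These two bipartitions need not be compatible: a single finitary component of $M$ can meet both $E_N^{\mathrm{fin}}$ and $E_N^{\mathrm{cof}}$, so $M$ is \emph{not} the direct sum of its restrictions to the four cells $E_M^{\bullet}\cap E_N^{\bullet}$. Consequently the intersection problem does not factor through this refinement, and solving the four restricted instances separately does not assemble into a solution on $E$. Your remark that in the remaining mixed case ``the two ground sets of the non-trivial interaction are disjoint'' is therefore unfounded; in fact that mixed interaction on a common ground set is exactly where all the difficulty lives.

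The paper avoids this trap by exploiting a feature of the Bowler--Carmesin construction that you overlooked: the second auxiliary matroid (the fibrewise $1$-uniform direct sum $\bigoplus_{e\in E}U_{\{e\}\times\Theta,1}$) is always \emph{finitary}, so one only needs intersection for a finitary $M$ against an $N\in\mathfrak{F}\oplus\mathfrak{F}^{*}$. (A separate preliminary step reduces $\Theta$ to a countable index set, without which $E\times\Theta$ need not be countable; you also skipped this.) Even with $M$ finitary, the problem does not decompose; instead the paper builds a single ``mixed'' augmenting-path scheme that on $E_0$ (the finitary part of $N$) follows the classical Edmonds exchange via $C_N(\,\cdot\,,I)$, while on $E_1$ (the cofinitary part) it routes through cocircuits $C_{N^{*}}(\,\cdot\,,\overset{\circ}{\mathsf{span}}_M(I)^{1})$ and tracks an auxiliary ``dually safe'' invariant. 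The termination and limit arguments (and in particular the proof that the set of vertices reachable in $D(I_n)$ from unresolved elements eventually forms a wave) genuinely mix the two regimes and cannot be obtained by solving finitary/finitary, cofinitary/cofinitary and finitary/cofinitary separately.
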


It is worth to mention that packings and coverings have a crucial role in other problems as well. For example if $ (M_i: i\in\Theta) 
$ is as 
in Theorem 
\ref{t: main result0} and admits 
both a packing and a covering, then there is a partition 
$ E=\bigsqcup_{i\in \Theta}B_i $ where $ B_i $ is a base of $ M_i $ (see \cite{erde2019base}). Maybe surprisingly, the failure 
of the analogous statement 
for arbitrary 
matroids  is consistent with set theory ZFC (Theorem 1.5 of \cite{erde2019base}) which might raise some scepticism about the 
provability of  Conjecture 
\ref{conj: Pack/Cov} for general matroids.

The Packing/Covering Conjecture \ref{conj: Pack/Cov} is closely related to the Matroid Intersection Conjecture which has 
been one of the central open problems in the theory of infinite matroids: 

\begin{conj}[Matroid Intersection Conjecture by Nash-Williams, \cite{aharoni1998intersection}]\label{MIC}
If $ M $ and $ N $ are finitary matroids on the same edge set $ E $, then they admit a common 
 independent set $ I $ for which there is a partition $ E=E_M\sqcup E_N $ such that $ I_M:=I\cap E_M $ spans $ E_M $ in $ 
 M $ and  $ I_N:=I\cap E_N $ spans $ E_N $ in $ N $.  
\end{conj}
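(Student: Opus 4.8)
The conjecture as stated is a well-known open problem, so what I would actually carry out is a proof of the case these tools reach: the generalized Matroid Intersection Conjecture for matroids in $\mathfrak{F}\oplus\mathfrak{F}^{*}$ on a countable ground set (in particular this covers Conjecture \ref{MIC} whenever $E$ is countable). Here ``generalized'' means that $M,N$ need not be finitary and that we seek a common independent set $I$ together with a partition $E=E_{M}\sqcup E_{N}$ such that $I\cap E_{M}$ spans $E_{M}$ in $M$ and $I\cap E_{N}$ spans $E_{N}$ in $N$ — an ``intersection-optimal'' common independent set. The approach has two ingredients: a duality reduction turning the spanning conditions for the cofinitary parts into independence conditions, and the theory of \emph{waves} and \emph{augmenting paths} adapted from the Aharoni--Nash-Williams circle of ideas for infinite matchings.

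First I would set up the auxiliary digraph. For a common independent set $I$ of $M$ and $N$, let $D(I)$ be the digraph on $E$ whose $M$-arcs record single-element exchanges keeping independence in $M$ and whose $N$-arcs do the same for $N$, together with the sets $S_{M}$ and $S_{N}$ of edges outside $I$ that can be added to $I$ in $M$ and in $N$ respectively. Because a finitary matroid has finite fundamental circuits and a cofinitary one finite fundamental cocircuits, $D(I)$ is locally finite in the directions that matter: an edge in a finitary component of $M$ (resp.\ $N$) carries only finitely many $M$-arcs (resp.\ $N$-arcs) at one of its ends, and an edge in a cofinitary component carries only finitely many at the other end — on a countable ground set this is exactly the amount of local finiteness one needs to run a König-type search for alternating $S_{M}$--$S_{N}$ paths, and such a path lets one augment $I$, enlarging it while keeping it common independent.

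The heart of the argument is the wave machinery. Call a pair $(W,I_{W})$ with $W\subseteq E$ and $I_{W}\subseteq W$ common independent a \emph{wave} if $I_{W}$ is intersection-optimal for the minors that $M$ and $N$ induce on $W$ and no alternating path in $D(I_{W})$ leaves $W$. One checks that $(\varnothing,\varnothing)$ is a wave and — the step where both countability and the $\mathfrak{F}\oplus\mathfrak{F}^{*}$ hypothesis are essential — that the natural ``sum'' of an increasing chain of waves is again a wave, so by Zorn's lemma there is a $\subseteq$-maximal wave $(W^{*},I_{W^{*}})$. If $W^{*}=E$, the desired partition and common independent set are read off directly. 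If $W^{*}\subsetneq E$, fix $e\in E\setminus W^{*}$: either there is an alternating path witnessing that $e$ is reachable, in which case augmenting along it and absorbing the vertices it uses yields a strictly larger wave, contradicting maximality; or no such path exists, and the portion of $E\setminus W^{*}$ that is unreachable from $S_{M}\cup S_{N}$ forms a \emph{hindrance}. Deleting/contracting the hindered part produces a strictly smaller instance still in the class (countable, $\mathfrak{F}\oplus\mathfrak{F}^{*}$), so a well-founded recursion — on $|E|$ or on a suitable rank parameter, with finite matroid intersection (Edmonds) as the base case — completes the proof.

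The step I expect to be the main obstacle is the limit behaviour of the cofinitary components. Independence in a cofinitary matroid is not a finitary property, so even an increasing union of common independent sets can acquire an infinite $N$-circuit present only in the limit; the whole reason for formulating the wave invariant carefully is to forbid precisely those augmentations whose ``shadow'' could later close up such a circuit. Calibrating the invariant so that it is preserved under chain unions yet still admits maximal waves and still forces a reduction at a hindrance — all while carrying along the four-way splitting of $E$ into the finitary and cofinitary parts of $M$ crossed with those of $N$ — is the technical core. A secondary difficulty is the duality bookkeeping that recasts ``$I\cap E_{N}$ spans $E_{N}$ in a cofinitary component of $N$'' as an independence condition for a minor of $N^{*}$, so that a single augmenting-path apparatus governs both the independence and the spanning requirements; this is exactly where the Bowler--Carmesin correspondence between matroid intersection and packing/covering enters.
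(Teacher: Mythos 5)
You are right that the statement as given is an open conjecture which the paper does not prove either; it only establishes the special case where $M,N\in\mathfrak{F}\oplus\mathfrak{F}^{*}$ and $E$ is countable, and your decision to aim at exactly that case is the correct reading of the situation. Your high-level ingredients (a duality translation of the spanning condition on cofinitary components into an independence condition, an auxiliary exchange digraph, waves, augmenting paths) are also the right ones. But as a proof the proposal has two genuine gaps, both at load-bearing points.

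First, the termination scheme does not exist. You propose a ``well-founded recursion on $|E|$ or on a suitable rank parameter, with finite matroid intersection as the base case,'' but when $E$ is countably infinite, deleting or contracting a hindered part leaves an instance of the same cardinality and there is no rank parameter in sight that decreases along a well-founded order. The paper avoids this entirely: it first passes to the largest wave $W(M,N)$ (Proposition \ref{wave union}) so as to work under $\mathsf{cond}^{+}$, reduces to the case $M$ finitary via the Bowler--Carmesin product construction, and then runs an $\omega$-length recursion producing an increasing sequence of ``nice dually safe feasible'' sets $I_n$, one augmentation per step, with the starting points of the augmenting paths chosen $\prec$-minimally so that $\mathsf{span}_N(I_n)\cap E_0$ exhausts $E_0$. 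Second, the limit step — which you correctly flag as the main obstacle — is left entirely unresolved, and it is precisely where all the work is. The paper needs (a) the stabilization lemma (Lemma \ref{l: stabilazing stuff}) showing that the sets reachable from unaugmentable elements eventually freeze, so that the unreachable part forms a wave and $\mathsf{cond}^{+}$ kills it; (b) the reduction to $M$ finitary so that $M$-independence survives the limit; (c) the ``dually safe'' invariant and the mixed arc definition on $E_1$ via fundamental $N^{*}$-circuits in $\overset{\circ}{\mathsf{span}}_M(I)^{1}$; and (d) an appeal to the known finitary-versus-cofinitary intersection theorem of Aigner-Horev, Carmesin and Fr\"ohlich together with an ultrafilter limit to span the cofinitary part $E_1$. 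Saying that the wave invariant must be ``calibrated'' to survive chain unions names the problem without solving it; the proposal as written would stall exactly there.
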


 Aharoni proved in \cite{aharoni1984konig} based on his earlier works with  Nash-Williams and  Shelah (see 
\cite{aharoni1983general} and \cite{aharoni1984another}) 
that the special case of Conjecture \ref{MIC} where $ M $ and $ N $ are partition matroids holds. The conjecture 
is also known 
to be true  if we assume that $ E $ is countable but $ M $ and 
$ N $ can be otherwise arbitrary (see \cite{joo2020MIC}).
Let us call Generalized Matroid Intersection Conjecture what we obtain from \ref{MIC} by extending it to arbitrary matroids 
(i.e. omitting the word ``finitary''). Several partial results has been obtained for this generalization but only for well-behaved 
matroid classes. The positive answer is known for example if: $ M $ is finitary and $ N $ is cofinatory 
\cite{aigner2018intersection}  or both 
matroids are 
singular\footnote{A matroid is singular if it is the direct sum of $ 1 $-uniform matroids and duals of $ 1 
$-uniform matroids.} and countable \cite{ghaderi2017}  or $ M $ is arbitrary and $ N $ is the direct sum of 
finitely many uniform matroids \cite{joó2020intersection}. 

Bowler and Carmesin showed that their Pacing/Covering Conjecture \ref{conj: Pack/Cov} and the 
Generalized Matroid Intersection Conjecture are equivalent and they also found  an important reduction for both (see Corollary 
3.9 in \cite{bowler2015matroid}). By analysing their proof it is clear that the equivalence can be established if we restrict both 
conjectures to a class of matroids closed under certain operations. It allows us to prove  Theorem \ref{t: main result} by showing 
the following instance of the  Generalized Matroid Intersection Conjecture which itself is a common extension of the singular case 
by Ghaderi \cite{ghaderi2017} and our previous work \cite{joo2020MIC}:

\begin{restatable}{thm}{MI}\label{t: main result}
  If $ M $ and $ N $ are matroids in $  \mathfrak{F}\oplus \mathfrak{F}^{*} $ on the same countable edge set $ E $, then they 
  admit a common independent set $ I $ for which there is a partition $ E=E_M\sqcup E_N $ such that $ I_M:=I\cap E_M $ spans 
  $ E_M $ in $ M $ and  $ I_N:=I\cap E_N $ spans $ E_N $ in $ N $.   
\end{restatable}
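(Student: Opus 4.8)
The plan is to reduce the statement to an augmenting-path argument carried out along a countable enumeration of $E$, exploiting the special structure of matroids in $\mathfrak{F}\oplus\mathfrak{F}^{*}$. First I would decompose $M = M^{\mathrm{fin}}\oplus M^{\mathrm{cof}}$ and $N = N^{\mathrm{fin}}\oplus N^{\mathrm{cof}}$ into their finitary and cofinitary parts, and recall that a cofinitary matroid is, up to the finite set of elements lying on its cocircuits, determined in a very rigid way: its dual is finitary, so cocircuits are finite and the ``spanning'' relation for $M^{\mathrm{cof}}$ is governed by a finitary matroid on the same ground set. The aim of this bookkeeping is to arrange that all the relevant independence/spanning conditions can be tested on finite sets, which is exactly what is needed to run a compactness-flavoured construction.

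Next I would build the desired common independent set $I$ together with the partition $E = E_M \sqcup E_N$ by a transfinite (here, just $\omega$-length) recursion. Fix an enumeration $E = \{e_0, e_1, \dots\}$. Maintain at stage $n$ a finite common independent set $I_n$, a finite ``committed'' portion of the partition, and — crucially — a finite set of \emph{wings} or \emph{potential augmenting structures} recording, for each already-handled element, a finite certificate that it is either spanned on the $M$-side or on the $N$-side. At stage $n$, consider $e_n$: if adding it keeps things consistent, do so; otherwise search for an $M$–$N$ alternating augmenting path (a finite one, by finitariness after the reduction) that either absorbs $e_n$ into one of the spanning sides or extends $I$. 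The key technical device, following the countable matroid intersection result of \cite{joo2020MIC}, is to ensure that this search can always be completed using only finitely much information, and that the choices made are \emph{stable}: once an element is declared to be on the $M$-spanning side with a finite witness, later stages never disturb that witness. One organizes this by only ever modifying $I$ along augmenting paths that avoid the finitely many already-frozen witnesses, which is possible because each witness involves only finitely many elements and there is always ``room'' in the infinite remainder.

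The main obstacle — and the place where $\mathfrak{F}\oplus\mathfrak{F}^{*}$ rather than arbitrary countable matroids is essential — is the cofinitary part. For finitary $M, N$ the argument of \cite{joo2020MIC} already handles everything; the new content is to show that cofinitary components do not spoil the stabilization. The point is that in a cofinitary matroid $K$, a set $S$ fails to be spanning exactly when $E\setminus \mathrm{cl}_K(S)$ contains a cocircuit, and all cocircuits are finite; dually, $S$ is spanning iff it meets every cocircuit, i.e.\ iff its complement is independent in $K^{*}\in\mathfrak{F}$. So ``$I_M$ spans $E_M$ in $M^{\mathrm{cof}}$'' becomes ``$E_M\setminus I_M$ is independent in the finitary matroid $(M^{\mathrm{cof}})^{*}$,'' a condition again checkable on finite sets and compatible with the finitary machinery. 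I would therefore recast the whole problem, via duality on the cofinitary blocks, as an intersection problem among finitary matroids with some coordinates ``flipped'', verify that the augmenting-path calculus of \cite{joo2020MIC} is insensitive to such flips (this requires checking that an augmenting path on one side corresponds to a legitimate move after dualizing — the routine but somewhat delicate verification), and then invoke the countable construction to finish. The proof concludes by taking $I = \bigcup_n I_n$ (a common independent set, by finitariness of all circuits after the reduction and by axiom \ref{item axiom4'} applied to the finitary reductions) and $E_M, E_N$ as the limits of the committed partitions, and checking the two spanning conditions hold from the frozen finite witnesses.
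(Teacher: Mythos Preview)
Your high-level instincts are sound --- augmenting paths adapted to the cofinitary part, countable recursion with stabilized witnesses --- but the proposal has two genuine gaps that the paper works hard to close.

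First, the step ``recast the whole problem, via duality on the cofinitary blocks, as an intersection problem among finitary matroids'' does not go through as a black-box reduction. If $M_1$ is a cofinitary component of $M$, then ``$I\cap E(M_1)$ is $M_1$-independent'' is equivalent to ``$E(M_1)\setminus I$ is $M_1^{*}$-spanning'', which is a condition on the \emph{complement} of $I$, not an independence condition on $I$ in any finitary matroid. So after flipping you no longer have a two-matroid intersection problem, and the augmenting-path calculus of \cite{joo2020MIC} cannot simply be invoked; what you call a ``routine but somewhat delicate verification'' is in fact the main technical content. The paper handles this by first reducing (via the Bowler--Carmesin Packing/Covering equivalence, Lemma~\ref{l: M finitary}) to the case $M\in\mathfrak{F}$, and then developing genuinely new \emph{mixed} augmenting paths (Section~\ref{s: augP}) in which arcs leaving $I\cap E_1$ are defined via $N^{*}$-circuits in $\overset{\circ}{\mathsf{span}}_M(I)\cap E_1$ rather than via $N$-circuits. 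The auxiliary notion of a \emph{dually safe} set and the key Lemma~\ref{l: aug extend} (that augmenting preserves nice feasibility) are where the real work lies.

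Second, the limiting step $I=\bigcup_n I_n$ justified by axiom~\ref{item axiom4'} fails: cofinitary matroids do \emph{not} satisfy~\ref{item axiom4'} (e.g.\ in $U_{\mathbb{N},1}^{*}$ each $\{0,\dots,n\}$ is independent but $\mathbb{N}$ is not), so an increasing union of common independent sets need not be $N$-independent on $E_1$. The paper circumvents this by building, at each stage, an auxiliary $I_n^{+}\supseteq I_n$ spanning $E_1$ in $N$ (via Corollary~\ref{cor: applyMixed}), and then extracting the final set with a free ultrafilter on $\mathbb{N}$ rather than a union; this simultaneously certifies $M$-independence (finitary) and $N$-spanning on $E_1$ (via finite cocircuits).
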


The paper is organized as follows. In the following section we introduce some notation and fundamental facts about matroids that 
are mostly well-know for finite ones. In Section \ref{s: premil} we collect some previous results and relatively easy technical 
lemmas in order be able the discuss  later the proof of the main results  without any distraction. Then  in Section 
\ref{s: reduction} we reduce the main results to a key-lemma. After these preparations the actual proof begins with 
Section \ref{s: augP} by developing and analysing an `augmenting path' type of technique.  Our main principle from this point is 
to 
handle the 
finitary and the 
cofinitary parts of matroid $ N $ differently in order to exploit the advantage 
of the finiteness of the  circuits and cocircuits respectively.  Equipped with these ``mixed'' augmenting paths we discuss the proof 
of our key-lemma in   Section 
\ref{s: proof of key-lemma}. Finally, we introduce an application in Section \ref{s: application} about orientations of a graph with 
in-degree requirements.

\section{Notation and basic facts}\label{sec notation}

In this section we introduce the notation and recall some basic facts about 
matroids that  we will use later without further explanation. For more details we refer to  \cite{nathanhabil}.

An enumeration of a countable set $ X $ is an $ \mathbb{N}\rightarrow X $ surjection that we write as $ \{x_n: n\in \mathbb{N} 
\} $.  We denote the symmetric difference $ (X\setminus Y)\cup (Y\setminus X) $ of $ X $ and $ Y $ by $ 
\boldsymbol{X\vartriangle Y} $.    
A pair ${M=(E,\mathcal{I})}$ is a \emph{matroid} if ${\mathcal{I} \subseteq \mathcal{P}(E)}$ satisfies  the axioms 
\ref{item axiom1}-\ref{item axiom4}.
The sets in~$\mathcal{I}$ are called \emph{independent} while the sets in ${\mathcal{P}(E) \setminus \mathcal{I}}$ are 
\emph{dependent}. An $ e\in E $ is a \emph{loop} if $ \{ e \} $ is dependent.
If~$E$ is finite, then  \ref{item axiom1}-\ref{item axiom3}  is one of the the usual axiomization of matroids in terms of 
independent sets  (while \ref{item axiom4} is redundant).
The maximal independent sets are called \emph{bases}. If $ M $ admits a finite base, then all the bases have the same size which 
is the rank $ \boldsymbol{r(M)} $ of $ M $ otherwise we let $ r(M):=\infty $.\footnote{It is independent of ZFC that the bases of 
a fixed matroid  have the same 
size (see \cite{higgs1969equicardinality} and \cite{bowler2016self}).} The minimal dependent sets are called \emph{circuits}. 
Every dependent set contains a circuit. The  \emph{components} of a matroid are the 
components of the hypergraph of its circuits.  The 
\emph{dual} of a matroid~${M}$ is the 
matroid~${M^*}$ with $ E(M^*)=E(M) $ whose bases are the complements of 
the bases of~$M$. For an  ${X \subseteq E}$, ${\boldsymbol{M  \upharpoonright X} :=(X,\mathcal{I} 
\cap \mathcal{P}(X))}$ is a matroid and it  is called the \emph{restriction} of~$M$ to~$X$.
We write ${\boldsymbol{M - X}}$ for $ M  \upharpoonright (E\setminus X) $  and call it the minor obtained by the 
\emph{deletion} of~$X$. 
The \emph{contraction} of $ X $ in $ M $ and the contraction of $ M $ onto $ X $ are 
${\boldsymbol{M/X}:=(M^* - X)^*}$ and $\boldsymbol{M.X}:= M/(E\setminus X) $ respectively.  
Contraction and deletion commute, i.e., for 
disjoint 
$ X,Y\subseteq E $, we have $ (M/X)-Y=(M-Y)/X $.  Matroids of this form are the  \emph{minors} of~$M$. The 
independence of an $ I\subseteq X $ in $ M.X $ is equivalent with $ I\subseteq \mathsf{span}_{M^{*}}(X\setminus I) $.  
If $ I $ is independent in $ M $  but $ I+e $ is dependent for some $ e\in E\setminus I $  then there is a unique 
circuit   $ \boldsymbol{C_M(e,I)} $ of $ M $ through $ e $ contained in $ I+e $.  We say~${X 
\subseteq E}$ \emph{spans}~${e \in E}$ in matroid~$M$ if either~${e \in X}$ or there exists a circuit~${C 
\ni e}$ with~${C-e \subseteq X}$. 
By letting $\boldsymbol{\mathsf{span}_{M}(X)}$ be the set of edges spanned by~$X$ in~$M$, we obtain a closure 
operation  
$ \mathsf{span}_{M}: \mathcal{P}(E)\rightarrow \mathcal{P}(E) $. 
An ${S \subseteq E}$ is \emph{spanning} in~$M$ if~${\mathsf{span}_{M}(S) = E}$. An $ S\subseteq X $ spans $ X $ in $ 
M.X $ iff $ X\setminus S $ is independent in $ M^{*} $. If $ M_i=(E_i, 
\mathcal{I}_i)$ is a matroid for $ i\in 
\Theta $ and the sets $ E_i $ are pairwise disjoint, then their direct sum is $ \boldsymbol{\bigoplus_{i\in 
\Theta}M_i}=(E,\mathcal{I}) $ where 
$ E=\bigsqcup_{i\in \Theta}E_i $ and $ \mathcal{I}=\{ \bigsqcup_{i\in \Theta}I_i : I_i\in \mathcal{I}_i\} $. For a class $ 
\mathfrak{C} $ of matroids $ \boldsymbol{\mathfrak{C}(E)} $ denotes the subclass $ \{ M\in \mathfrak{C}: E(M)=E \} $. 
A matroid is called uniform if for ever base $ B $ and every  edges $ e\in B $ and $ f\in E\setminus B $ the set $ B-e+f $ is also a 
base. Let 
$ \boldsymbol{U_{E,n}}$ be the $ n $-uniform matroid on $ E $, formally $ U_{E,n}:=(E , [E]^{\leq n})$. 

We need some further more subject-specific definitions. From now on let $ M 
$ and $ N $ be matroids on a common edge set $ E $.  We call a $ W\subseteq E $ an 
$ (M,N) $-\emph{wave} if  $ M\upharpoonright W $ admits an $ N.W $-independent base. Waves in the matroidal context were
introduced by 
Aharoni and Ziv in \cite{aharoni1998intersection} but it was also an important tool in the proof of the infinite version of Menger's 
theorem \cite{aharoni2009menger} by Aharoni and Berger. We write $ \boldsymbol{\mathsf{cond}(M,N)} $ for 
the condition: `For every $ (M,N) $-wave $ W $ there is an $ M $-independent base of $ N.W $.' A set $ I\in\mathcal{I}_M\cap 
\mathcal{I}_N $ is \emph{feasible} if $ 
\mathsf{cond}(M/I,N/I) $ holds.  
It is known (see Proposition \ref{wave union})  that there exists a $ \subseteq $-largest $ (M,N) $-wave which we denote  by 
$ \boldsymbol{W(M,N)} $.  Let $ \boldsymbol{\mathsf{cond}^{+}(M,N) }$ be the statement that $ W(M,N) $ consists of $ M 
$-loops and $ 
{r(N.W(M,N))=0} $. As the notation indicates it is a strengthening of $ \mathsf{cond}(M,N) $. Indeed, under  the assumption $ 
\mathsf{cond}^{+}(M,N) $,  $ \varnothing $ is an $ M $-independent base of $ N.W $ for every wave $ W $. A feasible $ I $ is 
called  \emph{nice} if $ \mathsf{cond}^{+}(M/I,N/I)  $ holds. For $ X\subseteq E $ let $ \boldsymbol{B(M,N,X)} $ be the 
(possibly empty) set of common bases of $ M \upharpoonright 
X $ and $ N.X $.
\section{Preliminary lemmas and preparation}\label{s: premil}
We collect  those necessary lemmas in this section that are either known from previous papers or follow more or less directly 
from definitions.

\subsection{Classical results}
The following two statements were proved by Edmonds' in \cite{edmonds2003submodular}:
\begin{prop}\label{prop: simult change}
        Assume that~$I$ is independent, 
        ${e_1, \dots, e_{m} \in \mathsf{span}(I) \setminus I}$ 
        and~${f_1, \dots, f_{m} \in I}$ 
        with ${f_j \in C(e_j, I)}$ 
        but ${f_j\notin C(e_k, I)}$ for~${k < j}$. 
        Then 
        \[
            {\left( I \cup \{e_1,\dots, e_{m}\} \right) \setminus \{f_1,\dots, f_{m}\}}
        \] 
        is independent and spans the same set as~$I$.
    \end{prop}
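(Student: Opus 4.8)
The plan is to prove the statement by induction on $m$, after first reducing to the single-exchange case $m=1$. For $m=1$ the argument is immediate: if $J$ is independent, $e\in\mathsf{span}(J)\setminus J$ and $f\in C(e,J)$, then $C(e,J)$ is the unique circuit contained in $J+e$ and it passes through $f$, so $(J+e)-f$ contains no circuit and hence is independent; moreover $(J+e)-f\subseteq\mathsf{span}(J)$ since $e\in\mathsf{span}(J)$, while the circuit $C(e,J)$ witnesses $f\in\mathsf{span}((J+e)-f)$, so $J\subseteq\mathsf{span}((J+e)-f)$ and the two spans agree. The key idea for general $m$ is to carry out the exchanges \emph{one at a time, in decreasing order of the index}, so that at every stage the relevant fundamental circuit is literally unchanged; the hypothesis ``$f_j\notin C(e_k,I)$ for $k<j$'' is exactly what makes this work.

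Concretely, I would first record that the hypotheses already force the $e_i$ to be pairwise distinct and the $f_j$ to be pairwise distinct, and that $e_i\neq f_j$ always: if $e_p=e_q$ with $p<q$ then $f_q\in C(e_q,I)=C(e_p,I)$, and if $f_p=f_q$ with $p<q$ then $f_q=f_p\in C(e_p,I)$, both contradicting $f_q\notin C(e_p,I)$; and $e_i\notin I\ni f_j$. Then, for $0\le k\le m$, set
\[ I_k:=\bigl(I\setminus\{f_m,f_{m-1},\dots,f_{m-k+1}\}\bigr)\cup\{e_m,e_{m-1},\dots,e_{m-k+1}\}, \]
so that $I_0=I$ and $I_m$ is the set in the statement. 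I would prove by a secondary induction on $k$ that each $I_k$ is independent and $\mathsf{span}(I_k)=\mathsf{span}(I)$.

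For the step from $I_k$ to $I_{k+1}=I_k-f_{m-k}+e_{m-k}$ the crucial claim is that $C(e_{m-k},I_k)=C(e_{m-k},I)$. To see it, note that $C(e_{m-k},I)\subseteq I+e_{m-k}$ contains none of the already-added elements $e_m,\dots,e_{m-k+1}$ (they lie outside $I$ and differ from $e_{m-k}$) and, by the hypothesis applied with $j>m-k$, none of the already-removed elements $f_m,\dots,f_{m-k+1}$; hence $C(e_{m-k},I)-e_{m-k}\subseteq I\setminus\{f_m,\dots,f_{m-k+1}\}\subseteq I_k$, so $C(e_{m-k},I)$ is a circuit contained in $I_k+e_{m-k}$ and therefore, $I_k$ being independent, it equals the unique such circuit $C(e_{m-k},I_k)$. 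In particular $f_{m-k}\in C(e_{m-k},I_k)$, and since also $e_{m-k}\in\mathsf{span}(I)=\mathsf{span}(I_k)$ with $e_{m-k}\notin I_k$, the $m=1$ case applied to $I_k$ yields that $I_{k+1}$ is independent with $\mathsf{span}(I_{k+1})=\mathsf{span}(I_k)=\mathsf{span}(I)$. Taking $k=m$ completes the induction. The only point that requires any care — and the place where a naive attempt (performing the exchanges in increasing index order) fails, since that would need \emph{earlier} $f$'s to avoid \emph{later} fundamental circuits — is the invariance of the fundamental circuits; once the exchanges are done from the largest index downward this reduces, as above, to a direct application of the given non-membership hypothesis.
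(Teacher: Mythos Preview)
Your proof is correct and follows essentially the same approach as the paper: perform the exchanges in decreasing order of index, using the hypothesis $f_j\notin C(e_k,I)$ for $k<j$ to guarantee that each relevant fundamental circuit is preserved. The paper's version is slightly more compact --- it inducts directly on $m$, carries out the single exchange $I\mapsto I-f_m+e_m$, observes $C(e_j,I-f_m+e_m)=C(e_j,I)$ for $j<m$, and then applies the induction hypothesis to the new set with $e_1,\dots,e_{m-1},f_1,\dots,f_{m-1}$ --- whereas you unroll this into an explicit sequence $I_0,\dots,I_m$ with a secondary induction on $k$; but the underlying argument is identical.
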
    
    \begin{proof}
        We use induction on~$m$. 
        The case~${m = 0}$ is trivial.
        Suppose that~${m > 0}$. 
        On the one hand, the set ${I-f_m+e_m}$ is independent and spans the same set as~$I$. 
        On the other hand, ${C(e_j, I-f_m+e_m) = C(e_j, I)}$ for~${j < m}$ because ${f_m \notin C(e_j, I)}$ for~${j < m}$. 
        Hence by using the induction hypothesis for~${I-f_m+e_m}$ and ${e_1,\dots, e_{m-1}, f_1,\dots, f_{m-1}}$ we are done.
    \end{proof}
\begin{lem}[Edmonds' augmenting path method]\label{l: augP Edmonds}
For $ I\in \mathcal{I}_M\cap \mathcal{I}_N $, exactly one of the following statements holds:
\begin{enumerate}
\item There is a partition $ E=E_M\sqcup E_N $ such that $ I_M:=I\cap E_M $ spans $ E_M $ in $ M $ and  $ I_N:=I\cap E_N $ 
spans $ E_N $ in $ N $.  

\item There is a 
 $ P=\{ x_1,\dots, x_{2n+1} \}\subseteq E $  with $ x_{1}\notin \mathsf{span}_N(I) $ and $ 
 x_{2n+1}\notin 
 \mathsf{span}_M(I) $ such that $ I\vartriangle P\in 
 \mathcal{I}_M\cap \mathcal{I}_N $ with $ \mathsf{span}_{M}(I\vartriangle P) =\mathsf{span}_{M}(I+x_{2n+1}) $ and 
 $ \mathsf{span}_{N}(I\vartriangle P) =\mathsf{span}_{M}(I+x_{1}) $.
\end{enumerate}
\end{lem}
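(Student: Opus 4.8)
The plan is to carry out Edmonds' exchange-digraph argument; it transfers essentially verbatim to the infinite setting because the augmenting path it produces is finite even when $I$ is infinite. Form the auxiliary digraph $D$ on the vertex set $E$ with two kinds of arcs: an \emph{$M$-arc} $z\to f$ whenever $z\in\mathsf{span}_M(I)\setminus I$ and $f\in C_M(z,I)-z$, and an \emph{$N$-arc} $f\to z$ whenever $z\in\mathsf{span}_N(I)\setminus I$ and $f\in C_N(z,I)-z$. Put $A:=E\setminus\mathsf{span}_N(I)$ and $B:=E\setminus\mathsf{span}_M(I)$; since $I\subseteq\mathsf{span}_M(I)\cap\mathsf{span}_N(I)$ we have $A,B\subseteq E\setminus I$, and every arc of $D$ runs between $I$ and $E\setminus I$, so dipaths alternate between the two sides. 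I will show that item~(2) is equivalent to the existence of a dipath from $A$ to $B$ in $D$, that item~(1) is equivalent to its non-existence, and that (1) and (2) are mutually exclusive; together these give the ``exactly one'' statement.

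Suppose a dipath from $A$ to $B$ exists and let $P=\{x_1,\dots,x_{2n+1}\}$ be a shortest one; being shortest it has no repeated vertex, its ends are $x_1\in A$ and $x_{2n+1}\in B$, the odd-indexed vertices lie outside $I$ and the even ones inside $I$, the arcs $x_{2i-1}\to x_{2i}$ are $M$-arcs (so $x_{2i}\in C_M(x_{2i-1},I)$) and the arcs $x_{2i}\to x_{2i+1}$ are $N$-arcs (so $x_{2i}\in C_N(x_{2i+1},I)$). Apply Proposition~\ref{prop: simult change} in $M$ to the pairs $e_i:=x_{2i-1}$, $f_i:=x_{2i}$ in increasing order $i=1,\dots,n$: every $e_i$ lies in $\mathsf{span}_M(I)\setminus I$ because it has an outgoing $M$-arc, and $f_j\notin C_M(e_k,I)$ for $k<j$, since otherwise $D$ has an $M$-arc $x_{2k-1}\to x_{2j}$, yielding a strictly shorter dipath from $A$ to $B$. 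Hence $I':=(I\cup\{x_1,x_3,\dots,x_{2n-1}\})\setminus\{x_2,x_4,\dots,x_{2n}\}$ is $M$-independent and $\mathsf{span}_M(I')=\mathsf{span}_M(I)$; as $x_{2n+1}\notin\mathsf{span}_M(I)$, this gives $I\vartriangle P=I'+x_{2n+1}\in\mathcal{I}_M$ with $\mathsf{span}_M(I\vartriangle P)=\mathsf{span}_M(I+x_{2n+1})$. The same argument in $N$ applied to the pairs $x_{2i+1}$, $x_{2i}$, but now in \emph{decreasing} order $i=n,\dots,1$ (for $N$-arcs it is again exactly the forward chords that minimality forbids), produces an $N$-independent $I'':=(I\cup\{x_3,x_5,\dots,x_{2n+1}\})\setminus\{x_2,\dots,x_{2n}\}$ with $\mathsf{span}_N(I'')=\mathsf{span}_N(I)$; adding $x_1\notin\mathsf{span}_N(I)$ yields $I\vartriangle P=I''+x_1\in\mathcal{I}_N$ with $\mathsf{span}_N(I\vartriangle P)=\mathsf{span}_N(I+x_1)$. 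So~(2) holds.

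If no dipath from $A$ to $B$ exists, let $R$ be the set of vertices reachable from $A$ in $D$ and set $E_M:=R$, $E_N:=E\setminus R$. No element of $B$ is reachable, so $R\subseteq\mathsf{span}_M(I)$; thus for $e\in R\setminus I$ the fundamental circuit $C_M(e,I)$ is defined and each $f\in C_M(e,I)-e$ is the head of the $M$-arc $e\to f$, hence $f\in R$, so $C_M(e,I)-e\subseteq I\cap R=I\cap E_M$ and $e\in\mathsf{span}_M(I\cap E_M)$. Therefore $I\cap E_M$ spans $E_M$ in $M$. Dually, $A\subseteq R$ gives $E\setminus R\subseteq\mathsf{span}_N(I)$, so for $e\in(E\setminus R)\setminus I$ the circuit $C_N(e,I)$ is defined and no $f\in C_N(e,I)-e$ lies in $R$, since otherwise the $N$-arc $f\to e$ would put $e$ into $R$; hence $C_N(e,I)-e\subseteq I\setminus R=I\cap E_N$ and $e\in\mathsf{span}_N(I\cap E_N)$, so $I\cap E_N$ spans $E_N$ in $N$. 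This is~(1).

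Finally, (1) and (2) cannot hold simultaneously: if $E=E_M\sqcup E_N$ is as in~(1) and $P=\{x_1,\dots,x_{2n+1}\}$ is as in~(2), then $x_1\notin\mathsf{span}_N(I)\supseteq\mathsf{span}_N(I\cap E_N)\supseteq E_N$ forces $x_1\in E_M$, and likewise $x_{2n+1}\in E_N$, so $P$ has a first vertex $x_j$ in $E_N$, with $x_{j-1}\in E_M$ and an arc $x_{j-1}\to x_j$. If this is an $M$-arc, then $x_{j-1}\in\mathsf{span}_M(I\cap E_M)$ gives $C_M(x_{j-1},I)=C_M(x_{j-1},I\cap E_M)\subseteq E_M$, contradicting $x_j\in C_M(x_{j-1},I)$; if it is an $N$-arc, then symmetrically $C_N(x_j,I)=C_N(x_j,I\cap E_N)\subseteq E_N$, contradicting $x_{j-1}\in C_N(x_j,I)$. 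The step I expect to be the main nuisance is the twofold application of Proposition~\ref{prop: simult change}: one has to notice that minimality of the dipath excludes only the ``forward'' chords, which forces the $M$- and $N$-exchanges to be carried out in opposite orders and the two extreme edges $x_{2n+1}$ and $x_1$ to be attached separately at the end.
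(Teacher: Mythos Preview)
The paper does not give its own proof of this lemma; it is stated as a classical result of Edmonds. Your reconstruction via the exchange digraph $D$ is the standard one and is correct for the substantive part: dipath $\Rightarrow$ (2) and no dipath $\Rightarrow$ (1), so at least one of (1), (2) holds. This is in fact all that the paper ever uses (see the proof of Lemma~\ref{one more edge}).

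Your mutual exclusion argument, however, has a gap. You announce that (2) is \emph{equivalent} to the existence of a dipath from $A$ to $B$, but you only prove the direction dipath $\Rightarrow$ (2). In the final paragraph you then treat the $P$ coming from (2) as if it were a dipath, asserting ``an arc $x_{j-1}\to x_j$''; nothing in the statement of (2) guarantees this --- (2) only constrains $x_1$, $x_{2n+1}$ and the spans of $I\vartriangle P$, not the internal structure of $P$. What your argument actually proves is ``(1) $\Rightarrow$ no dipath'', which together with ``no dipath $\Rightarrow$ (1)'' gives (1) $\Leftrightarrow$ no dipath, but does not yet exclude (2).

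A correct exclusion argument, valid for infinite matroids, goes via finite minors. From $\mathsf{span}_M(I\vartriangle P)=\mathsf{span}_M(I+x_{2n+1})$, both $I\vartriangle P$ and $I+x_{2n+1}$ are bases of $M\upharpoonright\mathsf{span}_M(I+x_{2n+1})$ with finite symmetric difference $P-x_{2n+1}$; contracting their intersection yields a finite matroid in which both are bases, so $|P\setminus I|=n+1$ and $|P\cap I|=n$. If (1) also held, then $(I\vartriangle P)\cap E_M$ is independent in $M\upharpoonright E_M$, where $I_M$ is a base, and their symmetric difference is contained in the finite set $P\cap E_M$; contracting the intersection gives a finite matroid of rank $|I_M\setminus(I\vartriangle P)|$, whence $|(I\vartriangle P)\cap E_M\setminus I_M|\le|I_M\setminus(I\vartriangle P)\cap E_M|$. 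The symmetric inequality in $N$ on $E_N$ and summation give $n+1=|P\setminus I|\le|P\cap I|=n$, a contradiction.
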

\noindent We will develop in Section 
\ref{s: augP}  a ``mixed'' augmenting path method which operates differently on the finitary 
and on the cofinitary part of an $ N\in (\mathfrak{F}\oplus \mathfrak{F}^{*})(E) $. The phrase `augmenting path' refers always 
to our mixed method except in the proof of Lemma \ref{one more edge}. 
Note that $ E_M $ is an $ (M,N) $-wave witnessed by $ I_M $ and $ E_N $ is an $ (N,M) $-wave witnessed by $ 
I_N $.

 One can define matroids in the language of circuits (see \cite{bruhn2013axioms}). The following claim is one of the axioms in 
 that case.    
\begin{claim}[Circuit elimination axiom]\label{Circuit elim}
Assume that $ C\ni e $ is a circuit and $ \{ C_x: x\in X \} $ is a family of circuits where $ X\subseteq C-e $ and $ C_x $ is a 
circuit with $C\cap X=\{ x \} $ avoiding $ e $. Then there is a circuit through $ e $ contained in 
 \[ \left( C\cup \bigcup_{x\in X}C_x \right) \setminus X =:Y \]
\end{claim}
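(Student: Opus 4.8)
The plan is to reduce everything to one computation with the closure operator $ \mathsf{span}_M $. Write $ Y-e:=Y\setminus\{e\} $ and note first that $ e\in Y $: since $ X\subseteq C-e $ we have $ e\notin X $, hence $ e\in C\setminus X\subseteq Y $. It therefore suffices to show that $ e\in \mathsf{span}_M(Y-e) $. Indeed, once this is known, the fact that $ e\notin Y-e $ together with the definition of $ \mathsf{span}_M $ produces a circuit $ C'\ni e $ with $ C'-e\subseteq Y-e $, and then $ C'\subseteq (Y-e)+e=Y $, which is precisely the assertion of the claim.

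To prove $ e\in \mathsf{span}_M(Y-e) $ I would proceed as follows. Because $ C $ is a circuit through $ e $ we have $ e\in \mathsf{span}_M(C-e) $, so by monotonicity and idempotence of the closure operator it is enough to establish $ C-e\subseteq \mathsf{span}_M(Y-e) $. Split $ C-e=\big((C-e)\setminus X\big)\cup X $. The first part lies in $ Y-e $ directly: it avoids $ e $ and is contained in $ C\setminus X\subseteq Y $. For the second part fix $ x\in X $; then $ C_x $ is a circuit through $ x $, so $ x\in \mathsf{span}_M(C_x-x) $, and $ C_x-x=C_x\setminus X $ (using $ C_x\cap X=\{x\} $) is contained in $ \big(\bigcup_{y\in X}C_y\big)\setminus X\subseteq Y $ while avoiding $ e $ because $ C_x $ does; hence $ C_x-x\subseteq Y-e $ and so $ x\in \mathsf{span}_M(Y-e) $. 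This gives $ X\subseteq \mathsf{span}_M(Y-e) $, and combining the two parts yields $ C-e\subseteq \mathsf{span}_M(Y-e) $. Applying the closure operator once more, $ e\in \mathsf{span}_M(C-e)\subseteq \mathsf{span}_M\big(\mathsf{span}_M(Y-e)\big)=\mathsf{span}_M(Y-e) $, as needed.

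There is no real obstacle here: the argument is purely formal once one commits to working with $ \mathsf{span}_M $ rather than manipulating circuits by hand, and the only thing requiring attention is the bookkeeping of memberships — the hypotheses $ X\subseteq C-e $, $ C_x\cap X=\{x\} $ and $ e\notin C_x $ are exactly what is needed to guarantee that $ (C-e)\setminus X $ and each $ C_x-x $ both avoid $ e $ and lie inside $ Y $. Since nothing in the computation uses finiteness of $ X $ or of the circuits involved, the same proof applies verbatim in the infinite setting.
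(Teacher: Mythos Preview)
Your proof is correct and follows essentially the same approach as the paper: both argue that each $C_x-x$ spans $x$, hence $C-e\subseteq\mathsf{span}(Y-e)$, and then use idempotence of the closure operator to conclude $e\in\mathsf{span}(Y-e)$. Your version is simply more explicit about the bookkeeping that the paper compresses into a single sentence.
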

\begin{proof}
Since $ C_x-x$ spans $ x $ we have $ C-e\subseteq\mathsf{span}(Y-e) $ and therefore $e\in \mathsf{span}(\mathsf{span}(Y-e) 
)$. But 
then $ e\in \mathsf{span}(Y-e) $ because $ \mathsf{span} $ is a closure operator.
\end{proof}
For finite matroids the axiom above is demanded only in the special case  where $ X $ is a singleton (known as ``Strong circuit 
elimination``) from which the case of arbitrary $ X $ can be derived by repeated application.
\begin{cor}\label{cor: Noutgoing arc}
Let $ I $ be an independent and suppose that there is a circuit $ C\subseteq \mathsf{span}(I) $ 
with $ e\in I\cap C $.  Then there is an $ f\in C\setminus I $ with $e\in  C(f,I) $.
\end{cor}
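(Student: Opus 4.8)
The statement sits immediately after the Circuit elimination axiom (Claim~\ref{Circuit elim}), so the natural plan is to argue by contradiction and manufacture, from a counterexample, a circuit lying inside the independent set $I$. Suppose towards a contradiction that $e\notin C(f,I)$ for every $f\in C\setminus I$. I would first record two trivial points: $C\setminus I\neq\varnothing$, since a circuit cannot be contained in the independent set $I$; and $e\notin C\setminus I$ because $e\in I$. Hence $X:=C\setminus I$ is a nonempty subset of $C-e$.

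Next I would apply Claim~\ref{Circuit elim} to the circuit $C\ni e$, the index set $X\subseteq C-e$, and the family of fundamental circuits $C_x:=C(x,I)$ for $x\in X$. These are well defined because each $x\in X$ lies in $\mathsf{span}(I)\setminus I$ (this is where the hypothesis $C\subseteq\mathsf{span}(I)$ is used), and they satisfy the incidence conditions required by the claim: from $C_x\subseteq I+x$ and $X\cap I=\varnothing$ we get $C_x\cap X=\{x\}$, while $e\notin C_x$ by the contradiction hypothesis. Claim~\ref{Circuit elim} then produces a circuit $C'$ with $e\in C'$ and
\[ C'\subseteq Y:=\Bigl( C\cup\bigcup_{x\in X}C_x \Bigr)\setminus X. \]

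It remains to check $Y\subseteq I$, which finishes the proof: $C\setminus X=C\cap I\subseteq I$, and for each $x\in X$ we have $C_x\setminus X=C_x-x\subseteq I$ since $C_x\subseteq I+x$ and $C_x\cap X=\{x\}$; taking the union gives $Y\subseteq I$, so $C'\subseteq I$ is a circuit inside an independent set — a contradiction. Therefore some $f\in C\setminus I$ must satisfy $e\in C(f,I)$. There is no real obstacle here; the only point to be careful about is the set bookkeeping around $X=C\setminus I$: the hypothesis $e\in I$ is exactly what makes $X$ avoid $e$ (so that $X\subseteq C-e$, as Claim~\ref{Circuit elim} demands) and simultaneously forces both $C\cap I$ and each $C_x-x$ to lie inside $I$.
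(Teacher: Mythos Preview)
Your proof is correct and follows essentially the same approach as the paper: both pick the fundamental circuits $C_x=C(x,I)$ for $x\in C\setminus I$, assume none of them contains $e$, and then apply Claim~\ref{Circuit elim} to produce a circuit through $e$ lying entirely in $I$, a contradiction. Your version simply spells out the bookkeeping (that $X=C\setminus I\subseteq C-e$, that $C_x\cap X=\{x\}$, and that $Y\subseteq I$) which the paper leaves implicit.
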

\begin{proof}
For every $ x\in C\setminus I $ we pick a circuit $ C_x $ with $ C_x\setminus I=\{ x \} $. If $ 
e\in C_x $ 
for some $ x $, 
then $ f:=x $ is as desired. Suppose for a contradiction that there is no such an $ x $. Then by Circuit elimination (Claim 
\ref{Circuit elim}) we obtain a circuit through $ e $ which is 
contained entirely in $ I $ contradicting the independence of $ I $.
\end{proof}

The following statement was shown by Aharoni and Ziv in \cite{aharoni1998intersection} using a slightly 
different terminology.
\begin{prop}\label{wave union}
The union of arbitrary many waves is a wave.
\end{prop}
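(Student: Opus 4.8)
The plan is to establish first that the union of two waves is a wave, and then to reach an arbitrary union by a transfinite recursion along a well-ordering of the family.

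For the two-wave step, let $ W_1,W_2 $ be $ (M,N) $-waves, fix a witness $ B_1 $ of $ W_1 $ (an $ N.W_1 $-independent base of $ M\upharpoonright W_1 $) and some witness $ B_2 $ of $ W_2 $, and set $ W:=W_1\cup W_2 $. Since $ B_1 $ spans $ W_1 $ in $ M $ it spans $ B_2\cap W_1 $, so $ \mathsf{span}_M(B_1\cup(B_2\setminus W_1)) $ contains $ B_1\cup B_2 $ and hence $ W $. By axiom \ref{item axiom4} I would extend $ B_1 $ to a maximal $ M $-independent subset $ B $ of $ B_1\cup(B_2\setminus W_1) $; as that set spans $ W $, this $ B $ is a base of $ M\upharpoonright W $, and necessarily $ B_1':=B\setminus B_1\subseteq B_2\setminus W_1\subseteq W_2\setminus W_1 $. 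It remains to see that $ B=B_1\cup B_1' $ is independent in $ N.W $, i.e.\ that $ B\subseteq\mathsf{span}_{N^{*}}(W\setminus B) $. The part $ B_1 $ is immediate, since $ B_1\subseteq\mathsf{span}_{N^{*}}(W_1\setminus B_1) $ and $ W_1\setminus B_1\subseteq W\setminus B $ (as $ B_1' $ avoids $ W_1 $). For $ B_1' $ I would use $ B_1'\subseteq B_2\subseteq\mathsf{span}_{N^{*}}(W_2\setminus B_2) $ together with the fact that $ W_2\setminus B_2 $ meets $ B $ only in $ D:=(B_1\cap W_2)\setminus B_2 $; since $ D\subseteq B_1\subseteq\mathsf{span}_{N^{*}}(W_1\setminus B_1)\subseteq\mathsf{span}_{N^{*}}(W\setminus B) $, this gives $ W_2\setminus B_2\subseteq\mathsf{span}_{N^{*}}(W\setminus B) $ and then $ B_1'\subseteq\mathsf{span}_{N^{*}}(W\setminus B) $ by idempotence of the closure operator $ \mathsf{span}_{N^{*}} $. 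Thus $ B $ witnesses that $ W $ is a wave. The same argument yields the sharper form I will need: if $ W_1\subseteq W_2 $ are waves, then any witness of $ W_1 $ extends to a witness of $ W_2 $ that agrees with it on $ W_1 $.

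For the general case, let $ (W_i:i\in\Theta) $ be waves with union $ W $. Well-ordering $ \Theta $, set $ V_0=\varnothing $, $ V_{j+1}=V_j\cup W_j $, and $ V_\lambda=\bigcup_{j<\lambda}V_j $ at limits, so $ W $ is reached at the top. Successor steps are handled by the sharper two-wave statement, producing witnesses $ B_j $ with $ B_j\subseteq B_{j+1} $ and $ B_{j+1}\setminus B_j\subseteq V_{j+1}\setminus V_j $. At a limit $ \lambda $ the set $ \bigcup_{j<\lambda}B_j $ spans $ V_\lambda $ in $ M $ and lies in $ \mathsf{span}_{N^{*}}\!\big(V_\lambda\setminus\bigcup_{j<\lambda}B_j\big) $, so by axiom \ref{item axiom4} some $ M $-independent subset of it is a base of $ M\upharpoonright V_\lambda $, and that subset is then automatically $ N.V_\lambda $-independent; one has to check that this re-extracted base can be taken compatible enough with the earlier witnesses to carry the recursion past the next limit.

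I expect that last point to be the only real obstacle. The reason re-extraction at limits is forced on us, rather than simply taking $ \bigcup_{j<\lambda}B_j $, is that in an infinite matroid an increasing union of independent sets can be dependent — an infinite $ M $-circuit may appear only in the limit — so $ \bigcup_{j<\lambda}B_j $ need not be a base of $ M\upharpoonright V_\lambda $. Arranging the transfinite bookkeeping so that these re-extractions do not destroy the compatibility relations between successive witnesses (equivalently, giving a clean proof that an increasing union of waves is a wave) is where the work concentrates; the two-wave step and the $ \mathsf{span}_{N^{*}} $-computations are routine once it is in place.
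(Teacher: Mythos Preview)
Your two-wave computation is fine, and you correctly identify where your argument runs into trouble: at a limit ordinal the union $\bigcup_{j<\lambda}B_j$ need not be $M$-independent, and after re-extracting a base you lose the nesting $B_j\subseteq B_\lambda$ that your successor step relies on. As written this is a genuine gap, not just bookkeeping --- there is no reason the re-extracted base should contain any particular $B_j$, and without that your ``sharper two-wave statement'' no longer applies at the next successor.

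The paper's proof avoids this difficulty by weakening the invariant. Rather than maintaining a \emph{base} of $M\upharpoonright W_{<\alpha}$ that is $N.W_{<\alpha}$-independent, it maintains only a set $B_{<\alpha}\subseteq W_{<\alpha}$ that is \emph{spanning} in $M\upharpoonright W_{<\alpha}$ and whose complement $W_{<\alpha}\setminus B_{<\alpha}$ is spanning in $N^{*}\upharpoonright W_{<\alpha}$ (the latter being equivalent to $N.W_{<\alpha}$-independence of $B_{<\alpha}$). The point is that \emph{both} conditions are spanning conditions, and spanning is trivially preserved under increasing unions --- so the limit step becomes a one-liner, with no re-extraction and no compatibility issues. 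The set $B_{<\alpha}$ is defined explicitly at successors as $B_{<\beta}\cup(B_\beta\setminus W_{<\beta})$, where $B_\beta$ is any fixed witness for the individual wave $W_\beta$; no extension via axiom~\ref{item axiom4} is needed along the way. Only at the very end does one pass to a maximal $M$-independent subset of $B_{<\kappa}$, which is then a base of $M\upharpoonright W$ and, being a subset of an $N.W$-independent set, still $N.W$-independent.

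In short: your instinct to carry a witness through the recursion is right, but insisting that the witness be a base at every stage is what creates the limit problem. Drop $M$-independence from the invariant, observe the duality between the two remaining conditions, and the limit case takes care of itself.
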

\begin{proof}
Suppose that $ W_{\beta} $ is a wave  for $ \beta<\kappa $ and let 
$W_{<\alpha}:=\bigcup_{\beta<\alpha}W_{\beta}  $ for $ \alpha\leq \kappa $. We  fix a base 
$ B_{\beta} \subseteq W_{\beta} $
of $ M\upharpoonright W_{\beta} $ which is independent in $ N.W_{\beta} $.  Let us define 
$ B_{<\alpha} $  by transfinite recursion for $ \alpha\leq \kappa $ as follows.

\[B_{<\alpha}:= \begin{cases} \varnothing &\mbox{if } \alpha=0 \\
B_{<\beta}\cup (B_\beta \setminus W_{<\beta}) & \mbox{if } \alpha=\beta+1\\
\bigcup_{\beta<\alpha}B_{<\beta} & \mbox{if } \alpha \text{ is limit ordinal}. 
\end{cases} \]

 First we show  by transfinite induction
 that $ B_{<\alpha} $ is spanning in $ M\upharpoonright W_{<\alpha} $. For $ \alpha=0 $ it is trivial. 
 For a limit $ \alpha $ it follows directly from the induction hypothesis. If $ \alpha=\beta+1 $, then
 by the choice of $ B_\beta $, the set $ B_\beta \setminus W_{<\beta} $ spans $ W_{\beta}\setminus W_{<\beta} $ 
 in $ M/W_{<\beta} $. Since $ W_{<\beta} $ is spanned by $ B_{<\beta} $ in $ M $ by induction, it follows that 
 $ W_{<\beta+1} $ is spanned by $ B_{<\beta+1} $ in $ M $.
 
 The  independence of $ B_{<\alpha} $ in
 $ N.W_{<\alpha} $   can be reformulated as ``$W_{<\alpha}\setminus B_{<\alpha}$ is spanning in $ 
 N^{*} \upharpoonright 
 W_{<\alpha} $'', 
 which can be proved the same way as above. 
\end{proof}

\subsection{Some more recent results and basic facts}

\begin{thm}[Aigner-Horev, Carmesin and Frölich; Theorem 1.5 in \cite{aigner2018intersection}]\label{t: mixed}
If $ M\in \mathfrak{F}(E) $ and $ N\in 
\mathfrak{F}^{*}(E) $, then there is an $ I\in \mathcal{I}_M\cap \mathcal{I}_N $ and a partition $ E=E_M\sqcup E_N $ such 
that $ I_M:=I\cap E_M $ spans $ E_M $ in $ M $ and  $ I_N:=I\cap E_N $ spans $ E_N $ in $ N $.  
\end{thm}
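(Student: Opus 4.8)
The plan is to reach alternative~(1) of Edmonds' augmenting path method (Lemma~\ref{l: augP Edmonds}). If $I\in\mathcal{I}_M\cap\mathcal{I}_N$ satisfies that alternative, then, setting $I_M:=I\cap E_M$ and $I_N:=I\cap E_N$, we have precisely the conclusion of the theorem: $I=I_M\sqcup I_N$ is common independent because $I$ is, and (as recorded right after Lemma~\ref{l: augP Edmonds}) $E_M$ is an $(M,N)$-wave and $E_N$ is an $(N,M)$-wave. So it suffices to produce a common independent set that admits no augmenting path. I would try to do this by iterating the augmenting step: start from $I_0:=\varnothing$; given a common independent $I_k$, if alternative~(1) fails then Lemma~\ref{l: augP Edmonds} yields an augmenting path $P_k$ and we pass to $I_{k+1}:=I_k\vartriangle P_k$; the aim is to drive this to a common independent set for which alternative~(1) holds.

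The delicate point is that $I_{k+1}=I_k\vartriangle P_k$ is \emph{not} an increasing operation: an augmentation deletes the even-indexed edges of $P_k$, so a priori $(I_k)$ need not stabilise and previously achieved spanning relations can be destroyed. To get around this, fix an enumeration $E=\{e_n:n\in\mathbb{N}\}$ and choose the augmenting paths (e.g.\ shortest ones, respecting the enumeration) so as to force irreversible progress: after finitely many augmentations the edge $e_0$ is permanently spanned --- in $M$ by the current $I\cap E_M$, or in $N$ by the current $I\cap E_N$, according to which side of the eventual partition it will lie on --- and then likewise for $e_1$, $e_2$, and so on. The two hypotheses are exactly what makes this feasible. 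Since $M$ is finitary, $\mathsf{span}_M$ commutes with increasing unions and $M$-independence is preserved under them, so the ``$M$-side'' of the construction is continuous at limits and, once an edge is $M$-spanned by a fixed finite witness inside $I_M$, it stays $M$-spanned. Since $N$ is cofinitary, $N^{*}$ is finitary; rewriting $N$-independence of $I$ as ``$E\setminus I$ is $N^{*}$-spanning'' and $N$-spanning as ``meeting every $N$-cocircuit'' --- both conditions governed by the \emph{finite} cocircuits of $N$ --- gives the analogous grip on the ``$N$-side''. Organising the bookkeeping around the largest wave $W(M,N)$ (Proposition~\ref{wave union}) and an auxiliary exchange digraph on $E$, one arranges that each augmentation enlarges the already-settled part, and a suitable limit of the resulting sequence is a common independent set with no augmenting path.

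The main obstacle is precisely this convergence and limit analysis: one has to prove that, with the right choice of augmenting paths, the set of permanently settled edges is monotone and eventually exhausts $E$, and that the limiting set is genuinely common independent with the partition $E=E_M\sqcup E_N$ correctly identified; handling the $N$-side, where increasing unions of independent sets can fail to be independent, is the technical heart. This is also where the finitary/cofinitary restriction is indispensable: for general matroids the analogous limits break down --- it is even consistent with ZFC that the associated disjoint union of bases fails to exist (Theorem~1.5 of \cite{erde2019base}) --- which is why the full Matroid Intersection / Packing--Covering problem remains open and only well-behaved classes such as $\mathfrak{F}$ versus $\mathfrak{F}^{*}$ are within reach.
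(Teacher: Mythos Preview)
The paper does not prove this theorem at all: it is quoted as Theorem~1.5 of \cite{aigner2018intersection} and used as a black box (most notably to derive Corollary~\ref{cor: applyMixed}). So there is no ``paper's own proof'' to compare against.

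As to your proposal itself: what you have written is an outline, not a proof, and you say so yourself. You correctly identify that reaching alternative~(1) of Lemma~\ref{l: augP Edmonds} is equivalent to the conclusion, and you correctly observe that the finitary hypothesis on $M$ and the cofinitary hypothesis on $N$ (equivalently: $N^{*}$ finitary) are exactly what one needs to pass to limits. But the entire content of the theorem lies in the part you defer: ``one arranges that each augmentation enlarges the already-settled part, and a suitable limit of the resulting sequence is a common independent set with no augmenting path.'' You have not arranged anything; you have only asserted that it can be arranged. In particular, the claim that a careful choice of augmenting paths (shortest, respecting an enumeration) forces each $e_n$ to be \emph{permanently} settled after finitely many steps is exactly the nontrivial lemma, and it is not obvious: an augmentation can destroy arcs of $D(I)$ and change fundamental circuits arbitrarily far from the path. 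The analogous stabilisation statement in the present paper (Lemma~\ref{l: stabilazing stuff}) takes real work even in the more structured setting of nice dually safe feasible sets.

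If you want a self-contained argument, a cleaner route than iterated augmentation is compactness: for each finite $F\subseteq E$ solve the finite matroid intersection problem for suitable finite minors, encode the solutions as points in $\{0,1\}^{E}\times\{0,1\}^{E}$ (the characteristic functions of $I$ and of $E_M$), and extract a limit via Tychonoff. The finitary/cofinitary hypotheses are precisely what make the limit conditions ($M$-independence of $I$, $N^{*}$-independence of $E\setminus I$, $M$-spanning of $E_M$ by $I\cap E_M$, and the $N$-side via cocircuits) closed, hence preserved. This is closer in spirit to how \cite{aigner2018intersection} proceeds.
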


\begin{cor}\label{cor: applyMixed}
If $ M\in \mathfrak{F}(E) $ and $ N\in 
\mathfrak{F}^{*}(E) $ satisfy $ \mathsf{cond}^{+}(M,N) $, then there is an $ M $-independent $ N $-base. 
\end{cor}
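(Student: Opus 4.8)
The plan is to bootstrap from the mixed intersection theorem and then use the extra strength packaged in $\mathsf{cond}^{+}$ to promote the resulting common independent set to a base of $N$. First I would apply Theorem \ref{t: mixed} to the pair $(M,N)$, obtaining an $I\in \mathcal{I}_M\cap \mathcal{I}_N$ together with a partition $E=E_M\sqcup E_N$ such that $I_M:=I\cap E_M$ spans $E_M$ in $M$ and $I_N:=I\cap E_N$ spans $E_N$ in $N$. As remarked right after Lemma \ref{l: augP Edmonds}, $E_M$ is then an $(M,N)$-wave (witnessed by $I_M$), so by the maximality of the largest wave we have $E_M\subseteq W(M,N)$.

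Next I would bring $\mathsf{cond}^{+}(M,N)$ to bear. By definition it gives $r(N.W(M,N))=0$, and this contraction statement unwinds to the span containment $W(M,N)\subseteq \mathsf{span}_N(E\setminus W(M,N))$ (recall that independence of $S\subseteq X$ in $N.X$ is equivalent to $S\subseteq \mathsf{span}_{N^{*}}(X\setminus S)$, and $r(N.X)=0$ means every element of $X$ is spanned in $N$ by $E\setminus X$). Alternatively one may invoke the remark that under $\mathsf{cond}^{+}(M,N)$ the empty set is an $M$-independent base of $N.W$ for \emph{every} wave $W$, and apply it to $W=E_M$ to get $E_M\subseteq \mathsf{span}_N(E\setminus E_M)=\mathsf{span}_N(E_N)$.

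Now I would combine the two ingredients. Since $E_M\subseteq W(M,N)$ we have $E\setminus W(M,N)\subseteq E_N$, so monotonicity and idempotence of $\mathsf{span}_N$ give $W(M,N)\subseteq \mathsf{span}_N(E\setminus W(M,N))\subseteq \mathsf{span}_N(E_N)$. As $I_N\subseteq E_N$ spans $E_N$ in $N$, we have $\mathsf{span}_N(E_N)=\mathsf{span}_N(I_N)$, hence $E_M\subseteq \mathsf{span}_N(I_N)\subseteq \mathsf{span}_N(I)$. Together with $E_N\subseteq \mathsf{span}_N(I_N)\subseteq \mathsf{span}_N(I)$ this yields $\mathsf{span}_N(I)=E$, i.e. $I$ is spanning in $N$. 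Since $I$ is also $N$-independent, it is a base of $N$, and it is $M$-independent by construction; that is exactly an $M$-independent $N$-base.

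I do not expect a serious obstacle here: the statement is essentially a bookkeeping consequence of Theorem \ref{t: mixed} once $\mathsf{cond}^{+}$ is available. The only points demanding a little care are recognizing that the set $E_M$ produced by the mixed intersection theorem is an $(M,N)$-wave (so that $\mathsf{cond}^{+}$, which speaks about $W(M,N)$, actually constrains it), and correctly translating the rank-zero contraction statement $r(N.W(M,N))=0$ into the span-closure statement used above, together with the replacement of $E_N$ by its $N$-spanning subset $I_N$.
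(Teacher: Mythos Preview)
Your proof is correct and follows essentially the same route as the paper: apply Theorem~\ref{t: mixed}, observe that $E_M$ is an $(M,N)$-wave, and use $\mathsf{cond}^{+}(M,N)$ to conclude that $I$ is $N$-spanning. The only cosmetic difference is that the paper also invokes the ``$M$-loops'' half of $\mathsf{cond}^{+}$ to note $I_M=\varnothing$, so that $I=I_N$ directly, whereas you work with the full $I$ and use only the $r(N.W)=0$ half; both arguments are valid and equally short.
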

\begin{proof}
Let $ E_M, E_N, 
I_M $ and $ I_N $ be as in Theorem  \ref{t: mixed}. Then $ E_M $ is a wave witnessed by $ I_M $ thus by 
$ \mathsf{cond}^{+}(M,N) $ we know that $ E_M $ consists of $ M $-loops and $ r(N.E_M)=0 $. But then $ I_M=\varnothing $ 
and $ I_N $ is a base of $ N $ which is independent in $ M $.
\end{proof}

\begin{obs}\label{o: Mloop}
If $ \mathsf{cond}(M,N) $ holds and $ L $ is a set of the $ M $-loops, then $ r(N.L)=0 $ which means $ 
L\subseteq\mathsf{span}_N(E\setminus L) $.
\end{obs}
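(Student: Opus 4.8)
The plan is to recognise $L$ itself as an $(M,N)$-wave and then feed it to the hypothesis $\mathsf{cond}(M,N)$. First I would observe that $M\upharpoonright L$ is the rank-zero matroid on $L$: since every element of $L$ is an $M$-loop, the only independent set of $M\upharpoonright L$ is $\varnothing$, so $\varnothing$ is the (unique) base of $M\upharpoonright L$, and it is vacuously independent in $N.L$. Hence $L$ is an $(M,N)$-wave, witnessed by $\varnothing$.

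Applying $\mathsf{cond}(M,N)$ to the wave $W:=L$ yields an $M$-independent base $B$ of $N.L$. But $B\subseteq L$ and every element of $L$ is an $M$-loop, so the only possibility is $B=\varnothing$. Thus $\varnothing$ is a base of $N.L$, i.e. $r(N.L)=0$.

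For the reformulation `$r(N.L)=0$ means $L\subseteq\mathsf{span}_N(E\setminus L)$' I would just unwind duality. The condition $r(N.L)=0$ says $\varnothing$ is spanning in $N.L$, i.e. $\varnothing$ spans $L$ in $N.L$; by the basic fact that $S\subseteq X$ spans $X$ in $N.X$ iff $X\setminus S$ is independent in $N^{*}$ (used first with $X=L,\ S=\varnothing$, then with $X=E,\ S=E\setminus L$, where $N.E=N$), this is equivalent to $L$ being independent in $N^{*}$, equivalently to $E\setminus L$ spanning $E$ in $N$, equivalently to $L\subseteq\mathsf{span}_N(E\setminus L)$.

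I do not expect a genuine obstacle here: the statement is essentially an unwinding of the definitions of wave and of $\mathsf{cond}$. The only points needing a little care are that a \emph{base} of $M\upharpoonright L$ is a maximal independent set (hence $\varnothing$, not $L$, even when $L\neq\varnothing$), and the elementary duality translation between rank-zero contractions and spanning sets.
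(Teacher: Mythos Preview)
Your proof is correct and is precisely the argument the paper has in mind; the statement is labelled an Observation and given without proof, and your write-up supplies exactly the straightforward unwinding of the definitions of wave and $\mathsf{cond}(M,N)$ that justifies it.
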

\begin{cor}\label{cor: Mloop}
If $ I $ is feasible, then $ r(N.(\mathsf{span}_{M}(I)\setminus I)=0$.
\end{cor}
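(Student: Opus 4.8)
The plan is to deduce this directly from Observation~\ref{o: Mloop}, applied not to $(M,N)$ but to the contracted pair $(M/I, N/I)$. Write $S := \mathsf{span}_M(I)$ and $L := S \setminus I = \mathsf{span}_M(I) \setminus I$. The first step is to observe that $L$ is exactly the set of loops of $M/I$: an edge $e \in E \setminus I$ is a loop of $M/I$ precisely when $e \in \mathsf{span}_M(I)$, while no edge of $I$ is a loop of $M/I$ (here $I\in\mathcal I_M$). Since $I$ is feasible, $\mathsf{cond}(M/I, N/I)$ holds by definition, so Observation~\ref{o: Mloop} with this $L$ yields $r\bigl((N/I).L\bigr) = 0$, equivalently $L \subseteq \mathsf{span}_{N/I}\bigl((E \setminus I) \setminus L\bigr)$.

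The second step is purely bookkeeping: transporting this back from $N/I$ to $N$. Using the standard fact that $\mathsf{span}$ commutes with contraction, i.e. $\mathsf{span}_{N/I}(Y) = \mathsf{span}_N(Y \cup I) \setminus I$ for $Y \subseteq E \setminus I$, together with $L \cap I = \varnothing$ (which gives $\bigl((E \setminus I) \setminus L\bigr) \cup I = E \setminus L$), we obtain
\[ L \subseteq \mathsf{span}_{N/I}\bigl((E \setminus I) \setminus L\bigr) = \mathsf{span}_N(E \setminus L) \setminus I \subseteq \mathsf{span}_N(E \setminus L). \]
By the reformulation of ``rank-zero contraction'' recorded in Observation~\ref{o: Mloop} (namely $r(N.L) = 0$ iff $L \subseteq \mathsf{span}_N(E \setminus L)$), this says precisely $r\bigl(N.(\mathsf{span}_M(I) \setminus I)\bigr) = r(N.L) = 0$, which is the claim.

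I expect no genuine obstacle here: once feasibility is unpacked into the statement $\mathsf{cond}(M/I, N/I)$, the corollary is a two-line consequence of Observation~\ref{o: Mloop}. The only points needing a moment's care are the identification of the loops of $M/I$ with $\mathsf{span}_M(I) \setminus I$ and the commutation of $\mathsf{span}$ with contraction, both of which are immediate from the definitions collected in Section~\ref{sec notation}.
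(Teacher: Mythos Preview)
Your proposal is correct and is precisely the intended deduction: the paper states Corollary~\ref{cor: Mloop} immediately after Observation~\ref{o: Mloop} without proof, and your argument (apply Observation~\ref{o: Mloop} to $(M/I,N/I)$ with $L=\mathsf{span}_M(I)\setminus I$ the set of $M/I$-loops) is exactly what is meant. Your second ``bookkeeping'' step can be shortened to the single observation that $(N/I).L=N.L$ holds directly since $L\cap I=\varnothing$, but the span computation you give is of course also correct.
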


\begin{obs}
If $ W_0 $ is an $ (M,N) $-wave and $ W_1 $ is an $ (M/W_0, N-W_0) $-wave, then $ W_0\cup W_1 $ is an $ (M,N) 
$-wave.
\end{obs}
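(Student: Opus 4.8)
The plan is to produce an explicit witness by combining the witnesses of the two given waves. Since $W_0$ is an $(M,N)$-wave, fix a base $B_0$ of $M\upharpoonright W_0$ that is independent in $N.W_0$, and since $W_1$ is an $(M/W_0,N-W_0)$-wave, fix a base $B_1$ of $(M/W_0)\upharpoonright W_1$ that is independent in $(N-W_0).W_1$. Note $B_0\subseteq W_0$ and $B_1\subseteq W_1\subseteq E\setminus W_0$, so $B_0$ and $B_1$ are disjoint. I claim $B:=B_0\cup B_1$ witnesses that $W_0\cup W_1$ is an $(M,N)$-wave, i.e.\ $B$ is a base of $M\upharpoonright(W_0\cup W_1)$ and is independent in $N.(W_0\cup W_1)$.

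First I would check that $B$ is a base of $M\upharpoonright(W_0\cup W_1)$, that is, $M$-independent and $M$-spanning in $W_0\cup W_1$. For independence: if some $M$-circuit $C$ were contained in $B$, then $C\not\subseteq B_0$ (as $B_0$ is independent), so we may pick $e\in C\cap B_1\subseteq W_1$; then $C-e\subseteq W_0\cup(B_1-e)$ gives $e\in\mathsf{span}_M(C-e)\subseteq\mathsf{span}_M(W_0\cup(B_1-e))$, hence $e\in\mathsf{span}_{M/W_0}(B_1-e)$, contradicting that $B_1$ is independent in $M/W_0$. For spanning: $B_0$ spans $W_0$ in $M$ and $B_1$ spans $W_1$ in $M/W_0$, so $W_0\cup W_1\subseteq\mathsf{span}_M(W_0\cup B_1)\subseteq\mathsf{span}_M(\mathsf{span}_M(B_0)\cup B_1)=\mathsf{span}_M(B_0\cup B_1)$, using that $\mathsf{span}_M$ is a closure operator.

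The second half — that $B$ is independent in $N.(W_0\cup W_1)$ — I would handle by dualising and repeating the computation above with $N^{*}$ in place of $M$, exactly as in the last line of the proof of Proposition~\ref{wave union}. Independence of $B$ in $N.(W_0\cup W_1)$ is equivalent to $(W_0\cup W_1)\setminus B=(W_0\setminus B_0)\cup(W_1\setminus B_1)$ being spanning in $N^{*}\upharpoonright(W_0\cup W_1)$. Here the hypotheses translate as: $W_0\setminus B_0$ spans $W_0$ in $N^{*}$ (since $B_0$ is independent in $N.W_0$), and $W_1\setminus B_1$ spans $W_1$ in $N^{*}/W_0$ (since $B_1$ is independent in $(N-W_0).W_1$ and $(N-W_0)^{*}=N^{*}/W_0$). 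The same closure computation as in the spanning step above, now carried out in $N^{*}$ with $W_0\setminus B_0$ and $W_1\setminus B_1$ in the roles of $B_0$ and $B_1$, yields that $(W_0\setminus B_0)\cup(W_1\setminus B_1)$ spans $W_0\cup W_1$ in $N^{*}$, which is what we needed. Nothing here is deep; the only points requiring care are the minor/duality bookkeeping, namely $(N-W_0)^{*}=N^{*}/W_0$ and the commutation $(P\upharpoonright(X\cup Y))/X=(P/X)\upharpoonright Y$ for disjoint $X,Y$, which is what lets the span statement for $B_1$ be matched against the span statement for $B$. In effect the whole observation is the ``wave version'' of the elementary fact that a base of $M\upharpoonright W_0$ together with a base of $(M/W_0)\upharpoonright W_1$ forms a base of $M\upharpoonright(W_0\cup W_1)$, applied once to $M$ and once to $N^{*}$.
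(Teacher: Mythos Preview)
Your proof is correct. In the paper this statement is recorded as an Observation without proof; your argument is exactly the natural one, and indeed it is essentially the two-step instance of the transfinite construction given in the proof of Proposition~\ref{wave union} (take $B_{<1}=B_0$ and $B_{<2}=B_0\cup B_1$), including the dualisation trick for the $N.(W_0\cup W_1)$-independence half.
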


\begin{cor}\label{cor: empty wave}
For $ W:=W(M,N) $, the largest $ (M/W, N-W) $-wave  is $ \varnothing $. In particular, $ \mathsf{cond}^{+}(M/W,N-W) $ 
holds.
\end{cor}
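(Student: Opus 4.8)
The plan is to derive this directly from the Observation stated immediately above it, together with the existence of a $\subseteq$-largest $(M,N)$-wave guaranteed by Proposition~\ref{wave union}. So there is essentially no real obstacle: the corollary is just the combination of ``waves compose'' and ``a largest wave exists''.

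First I would set $W := W(M,N)$ and let $W_1$ be an arbitrary $(M/W, N-W)$-wave; by definition $W_1$ is a subset of the common ground set of these matroids, that is $W_1 \subseteq E(M/W) = E \setminus W$. Applying the preceding Observation with $W_0 := W$ (which is an $(M,N)$-wave) and $W_1$ an $(M/W_0, N-W_0)$-wave, we obtain that $W \cup W_1$ is an $(M,N)$-wave. Since $W$ is the $\subseteq$-largest $(M,N)$-wave, this forces $W \cup W_1 \subseteq W$, hence $W_1 \subseteq W$. Together with $W_1 \subseteq E \setminus W$ this yields $W_1 = \varnothing$. As $W_1$ was an arbitrary $(M/W, N-W)$-wave, the $\subseteq$-largest one is $\varnothing$; equivalently $W(M/W, N-W) = \varnothing$.

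For the ``in particular'' clause I would just unwind the definition of $\mathsf{cond}^{+}(M/W, N-W)$: it asserts that $W(M/W, N-W)$ consists of $(M/W)$-loops and that $r\big((N-W).W(M/W, N-W)\big) = 0$. Both clauses hold vacuously once $W(M/W, N-W) = \varnothing$ is known, since the empty set contains no non-loop edges and a matroid on the empty ground set has rank $0$. This completes the argument; the only point worth flagging is the implicit use of Proposition~\ref{wave union} to make ``the largest $(M,N)$-wave $W(M,N)$'' meaningful in the first place.
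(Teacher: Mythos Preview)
Your proof is correct and matches the paper's intended argument: the corollary is stated without proof precisely because it follows immediately from the preceding Observation combined with the maximality of $W(M,N)$, exactly as you wrote. There is nothing to add.
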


\begin{obs}\label{o: condRestrict}
 $ \mathsf{cond}^{+}(M,N) $ implies ${\mathsf{cond}^{+}(M\upharpoonright X,N.X) }$  for every $ X\subseteq E $.
\end{obs}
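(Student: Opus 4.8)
The plan is to observe that the operations $M\mapsto M\upharpoonright X$ and $N\mapsto N.X$ interact with the notion of wave in the simplest possible way: the $(M\upharpoonright X, N.X)$-waves are exactly the $(M,N)$-waves contained in $X$. To establish this I would first record two minor identities valid for every $W\subseteq X$, namely $(M\upharpoonright X)\upharpoonright W = M\upharpoonright W$ and $(N.X).W = N.W$; the latter is just compositivity of contraction, $\big(N/(E\setminus X)\big)/(X\setminus W) = N/(E\setminus W)$, since $(E\setminus X)\cup(X\setminus W)=E\setminus W$ for $W\subseteq X$. With these in hand, the defining condition of an $(M\upharpoonright X, N.X)$-wave, that $(M\upharpoonright X)\upharpoonright W$ has an $(N.X).W$-independent base, reads verbatim as ``$M\upharpoonright W$ has an $N.W$-independent base'', i.e. as $W$ being an $(M,N)$-wave.

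Next I would set $W := W(M,N)$ and $W' := W(M\upharpoonright X, N.X)$, the latter existing by Proposition \ref{wave union} applied to the pair $(M\upharpoonright X, N.X)$. By the previous step $W'$ is in particular an $(M,N)$-wave, so $W'\subseteq W$. Now I feed in the hypothesis $\mathsf{cond}^{+}(M,N)$, which asserts that $W$ consists of $M$-loops and that $r(N.W)=0$, and push both conclusions down to $W'$. An $M$-loop lying in $X$ is an $(M\upharpoonright X)$-loop, so $W'$ consists of $(M\upharpoonright X)$-loops. And $r(N.W)=0$ is equivalent to ``$W$ is $N^{*}$-independent'' --- because then $\varnothing$ spans $W$ in $N.W$, using the basic fact from Section \ref{sec notation} that $S$ spans $Y$ in $M.Y$ iff $Y\setminus S$ is $M^{*}$-independent --- whence the subset $W'$ is also $N^{*}$-independent, i.e. $r(N.W')=0$, i.e. $r((N.X).W')=0$ by the displayed identity. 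These are precisely the two clauses of $\mathsf{cond}^{+}(M\upharpoonright X, N.X)$, so we are done.

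I do not anticipate a genuine obstacle: the argument is entirely bookkeeping with minors. The two places that call for a moment's care are the identity $(N.X).W=N.W$ for $W\subseteq X$ and the translation between ``$r(N.W)=0$'' and ``$W$ is $N^{*}$-independent'', and both are among the elementary facts collected in Section \ref{sec notation}, so neither requires real work.
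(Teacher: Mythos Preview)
Your argument is correct and is exactly the natural verification one would write; the paper states this as an Observation without proof, so there is no alternative approach to compare against.
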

\begin{prop}\label{p: iterate feasible}
If $ I_0\in \mathcal{I}_N\cap \mathcal{I}_M $ and $ I_1 $ is feasible with respect to $ (M/I_0, 
N/I_0) $, then $ I_0\cup I_1 $ is 
feasible with respect to $ (M,N) $. If in addition $ I_1 $ is a nice feasible in regards to $ (M/I_0, N/I_0) $, then so is 
$ I_0\cup I_1 $ to $ (M,N) $.
\end{prop}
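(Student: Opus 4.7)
The plan is to unpack the two definitions and reduce everything to the associativity identities
\[
M/(I_0\cup I_1) \;=\; (M/I_0)/I_1, \qquad N/(I_0\cup I_1) \;=\; (N/I_0)/I_1,
\]
so that every clause in the statement of (nice) feasibility for $I_0\cup I_1$ with respect to $(M,N)$ becomes, term by term, the corresponding clause for $I_1$ with respect to $(M/I_0,N/I_0)$.

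First I would record these identities. They follow from the definition $M/X:=(M^{*}-X)^{*}$ together with the trivial associativity of deletion: for disjoint $I_0,I_1$ one has $(M^{*}-I_0)-I_1=M^{*}-(I_0\cup I_1)$, and dualising gives the claim. Next I would verify that $I_0\cup I_1\in\mathcal{I}_M\cap\mathcal{I}_N$. Since $I_1\subseteq E\setminus I_0$, the two sets are disjoint, and the standard equivalence ``$I_1\in\mathcal{I}_{M/I_0}\Leftrightarrow I_0\cup I_1\in\mathcal{I}_M$'' (immediate from the definition of contraction applied to an independent set $I_0$) together with its analogue for $N$ completes this step.

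With these identifications in place, $\mathsf{cond}(M/(I_0\cup I_1),N/(I_0\cup I_1))$ is literally $\mathsf{cond}((M/I_0)/I_1,(N/I_0)/I_1)$, which is part of the hypothesis that $I_1$ is feasible over $(M/I_0,N/I_0)$. This proves the first assertion. For the second one, note that the largest wave $W(\cdot,\cdot)$, the set of loops of the first argument, and the rank of the second argument contracted onto that wave are all intrinsic quantities of the two matroids involved, so they are unchanged when we rewrite $(M/I_0)/I_1$ as $M/(I_0\cup I_1)$; hence $\mathsf{cond}^{+}((M/I_0)/I_1,(N/I_0)/I_1)$ is the same statement as $\mathsf{cond}^{+}(M/(I_0\cup I_1),N/(I_0\cup I_1))$, and niceness of $I_1$ over $(M/I_0,N/I_0)$ therefore transfers to niceness of $I_0\cup I_1$ over $(M,N)$.

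There is essentially no genuine obstacle: the whole proposition is bookkeeping whose single content is the associativity of contraction for infinite matroids. The only point worth double-checking is that $W$, the notion of loop, and the rank function depend only on the matroids themselves, not on how one chooses to present them as iterated minors; once this is observed, the proof reduces to a substitution of notation.
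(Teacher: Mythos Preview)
Your proof is correct and follows the same approach as the paper's own proof, which is equally brief: both reduce the statement to the associativity of contraction, so that $\mathsf{cond}(M/(I_0\cup I_1),N/(I_0\cup I_1))$ and $\mathsf{cond}((M/I_0)/I_1,(N/I_0)/I_1)$ are literally the same condition (and likewise for $\mathsf{cond}^{+}$). If anything, you are slightly more careful than the paper in explicitly checking $I_0\cup I_1\in\mathcal{I}_M\cap\mathcal{I}_N$, which the paper leaves implicit.
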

\begin{proof}
By definition the feasibility of $ I_1 $ w.r.t. $ (M/I_0, N/I_0) $ means  that the condition $ \mathsf{cond}(M/(I_0\cup 
I_1),N/(I_0\cup I_1)) $ 
holds. The feasibility of $ I_0\cup I_1 $ w.r.t. $ (M, N) $ means the same also by definition. For `nice feasible'  the argument is 
similar, only  $ \mathsf{cond} $ must be replaced by $ \mathsf{cond}^{+} $.
\end{proof}

The following lemma was introduced in  \cite{joo2020MIC}.
\begin{lem}\label{one more edge}
Condition $ \mathsf{cond}^{+}(M, N) $ implies that 
 whenever $ W $ is an $ (M/e, N/e) $-wave for some $ e\in E $ witnessed by $ B\subseteq W $, then 
 $B\in  B(M/e,N/e,W) $, i.e. $ B $ is spanning in $ N.W $.
\end{lem}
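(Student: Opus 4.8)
The plan is to push everything down to the pair $(M\upharpoonright W,\,N.W)$ and then run Edmonds' augmenting path method on the given witness $B$. Since $e\notin W$ we have $(N/e).W=N/(E\setminus W)=N.W$ and, because contraction and deletion commute, $(M/e)\upharpoonright W=(M\upharpoonright(W+e))/e$; moreover $B$ is independent in $M/e$, hence in $M$, so $B$ is a common independent set of $M\upharpoonright W$ and $N.W$. By Observation~\ref{o: condRestrict} applied with $X=W$, the condition $\mathsf{cond}^{+}(M\upharpoonright W,N.W)$ holds. Now apply Lemma~\ref{l: augP Edmonds} to $B$ in the pair $(M\upharpoonright W,N.W)$: it yields exactly one of the two alternatives, a partition of $W$, or an augmenting path (here in the classical sense of Lemma~\ref{l: augP Edmonds}).

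The crucial, and I expect hardest, step is excluding the augmenting path. Suppose $P=\{x_1,\dots,x_{2n+1}\}\subseteq W$ is one, so $x_{2n+1}\notin\mathsf{span}_{M\upharpoonright W}(B)$, the set $B\vartriangle P$ lies in $\mathcal I_{M\upharpoonright W}\cap\mathcal I_{N.W}$, and $\mathsf{span}_{M\upharpoonright W}(B\vartriangle P)=\mathsf{span}_{M\upharpoonright W}(B+x_{2n+1})$. From $\mathsf{span}_{M\upharpoonright W}(B)=\mathsf{span}_M(B)\cap W$ and $x_{2n+1}\in W$ we get $x_{2n+1}\notin\mathsf{span}_M(B)$; but $B$ is a base of $(M/e)\upharpoonright W$, so $W\subseteq\mathsf{span}_M(B+e)$ and hence $x_{2n+1}\in\mathsf{span}_M(B+e)\setminus\mathsf{span}_M(B)$. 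This forces $e\notin\mathsf{span}_M(B)$, so $e$ is not an $M$-loop and $B+e$ is independent in $M$ (because $B$ is independent in $M/e$); thus $C_M(x_{2n+1},B+e)$ is a circuit through $x_{2n+1}$ that must also contain $e$, so $e\in\mathsf{span}_M(B+x_{2n+1})$ and therefore $W\subseteq\mathsf{span}_M(B+e)\subseteq\mathsf{span}_M(B+x_{2n+1})$. Consequently $B+x_{2n+1}$, and with it $B\vartriangle P$, spans $W$ in $M\upharpoonright W$; being independent there, $B\vartriangle P$ is a base of $M\upharpoonright W$ which is independent in $N.W$, i.e. $W$ is an $(M,N)$-wave. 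But then $W\subseteq W(M,N)$ consists of $M$-loops by $\mathsf{cond}^{+}(M,N)$, so $M\upharpoonright W$ has only the empty base, forcing $B\vartriangle P=\varnothing$ and contradicting $x_{2n+1}\in P\setminus B\subseteq B\vartriangle P$. The conceptual obstacle is precisely this: converting the classical augmenting path into an honest $(M,N)$-wave that $\mathsf{cond}^{+}(M,N)$ forbids; the surrounding manipulations (the identity $(M/e)\upharpoonright W=(M\upharpoonright(W+e))/e$ and the closure bookkeeping) are routine.

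With the path excluded, Lemma~\ref{l: augP Edmonds} provides a partition $W=W_1\sqcup W_2$ such that $B\cap W_1$ spans $W_1$ in $M\upharpoonright W$ and $B\cap W_2$ spans $W_2$ in $N.W$. Then $W_1$ is an $(M\upharpoonright W,N.W)$-wave, so $W_1\subseteq W(M\upharpoonright W,N.W)$; by $\mathsf{cond}^{+}(M\upharpoonright W,N.W)$ we have $r\big((N.W).W(M\upharpoonright W,N.W)\big)=0$, and since $(N.W).W_1$ is a contraction-minor of $(N.W).W(M\upharpoonright W,N.W)$ it follows that $r\big((N.W).W_1\big)=0$. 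As $(N.W).W_1=(N.W)/W_2$, this means $W_1\subseteq\mathsf{span}_{N.W}(W_2)$; combining with $W_2\subseteq\mathsf{span}_{N.W}(B\cap W_2)\subseteq\mathsf{span}_{N.W}(B)$ and idempotence of the closure operator gives $W=W_1\cup W_2\subseteq\mathsf{span}_{N.W}(B)$. Hence $B$ is spanning in $N.W=(N/e).W$; being in addition a base of $(M/e)\upharpoonright W$ and independent in $(N/e).W$, it is a common base, i.e. $B\in B(M/e,N/e,W)$, as asserted.
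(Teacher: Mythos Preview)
Your proof is correct and follows the same overall strategy as the paper: apply Edmonds' augmenting path method (Lemma~\ref{l: augP Edmonds}) to $B$ in the pair $(M\upharpoonright W,\,N.W)$, rule out the augmenting path by showing it would make $W$ an $(M,N)$-wave forbidden by $\mathsf{cond}^{+}(M,N)$, and then use the resulting partition to conclude that $B$ spans $N.W$.

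The only noteworthy difference is in how you finish after obtaining the partition $W=W_1\sqcup W_2$. The paper observes that $W_1$ (its $W_0$) is already an $(M,N)$-wave, so $\mathsf{cond}^{+}(M,N)$ forces it to consist of $M$-loops; then it invokes Observation~\ref{o: Mloop} to get $r(N.W_1)=0$ and completes the spanning argument by extending with a base of $N-W$. You instead first restrict the condition via Observation~\ref{o: condRestrict} to obtain $\mathsf{cond}^{+}(M\upharpoonright W,\,N.W)$, and then read off $r((N.W).W_1)=0$ directly from the rank part of that restricted condition, which lets you conclude $W\subseteq\mathsf{span}_{N.W}(B)$ by a pure closure argument without the auxiliary base $B'$. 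This is a minor but clean variation; it avoids the base-extension bookkeeping at the cost of invoking one extra observation. Your treatment of the augmenting-path exclusion is also more explicit than the paper's (which just asserts that $B\vartriangle P$ witnesses $W$ as an $(M,N)$-wave, implicitly using $r((M/B)\upharpoonright W)=1$), but the content is the same.
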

\begin{proof}
Let $ W $ be an $ (M/e, N/e)  $-wave. Note that 
$(N/e).W=N.W  $ by definition. Pick a $ B\subseteq W $ which is an $ N.W $-independent base of $ (M/e)\upharpoonright 
W $.  We may assume 
that 
$ e\in \mathsf{span}_M(W) $ and $ e $ is not an $ M $-loop. Indeed,  otherwise $  (M/e) \upharpoonright 
W=M\upharpoonright 
W$ holds 
and hence $ W $ is also an $ (M,N) $-wave. Thus by $ \mathsf{cond}^{+}(M, N) $ we may conclude that $ W $ consists of $ M 
$-loops  and hence $ B=\varnothing $, moreover, $ r(N.W)=0 $ and therefore $ 
\varnothing $ is a base of $ N.W $.

Then $ B $ is not a base of
$ M\upharpoonright W $  but ``almost'', namely $r(M/B \upharpoonright W) =1 $.  We apply the 
augmenting path Lemma \ref{l: augP Edmonds} by Edmonds  with $ B$ in regards to  $M\upharpoonright W$ and $  N.W  $. An 
augmenting 
path $ P $ cannot exist. Indeed, if $ P $ were an augmenting path then $ B\vartriangle P $ would show that $ W $ is an 
 $ (M,N) $-wave which does not consist of $ M $-loops, contradiction. Thus we get a partition 
$ W=W_{0}\sqcup W_{1} $ instead where $ W_0 $ is an $ (M \upharpoonright W,N.W) $-wave witnessed by $ B\cap W_0 $, 
and therefore also an $ (M,N) 
$-wave,  and $ W_1 $ is an 
$ (N.W, M\upharpoonright W) $-wave showed by $ B\cap W_1 $.
Then $ W_0 $ must consist of $ M $-loops by $ \mathsf{cond}^{+}(M, N) $ and 
therefore $ B\subseteq W_1 $ by the $ M $-independence of $ B $. We need to show that $ B $ is spanning not just in $ N.W_1 $ 
but also in $ N.W $. To do so let $ B' $ be a base of $ N-W$. 
Then $ B\cup B' $ spans $ E\setminus W_0 $ in $ N $ because $ B=B\cap W_1 $ is a base $ N.W_1 $. But 
then $ B\cup B' $ is spanning in  $ N $ because $ r(N.W_0)=0 $ by Observation \ref{o: Mloop}. We conclude that $ B $ is 
spanning in $ N.W $ as desired.
 \end{proof}

\subsection{Technical lemmas in regards to \texorpdfstring{$ \boldsymbol{B(M,N,W)} $}{BMNW}}

\begin{prop}\label{prop: WB nice feasible}
For $ W:=W(M,N) $, the elements of $ B(M,N,W) $ are nice feasible sets.
\end{prop}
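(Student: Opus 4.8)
The plan is to show that any $B\in B(M,N,W)$ with $W:=W(M,N)$ is a feasible set, and moreover that it is \emph{nice}, i.e.\ that $\mathsf{cond}^{+}(M/B,N/B)$ holds. The first step is to unpack what $B\in B(M,N,W)$ means: $B\subseteq W$ is simultaneously a base of $M\upharpoonright W$ and a base of $N.W$. In particular $B$ is independent in $M$ and, since a base of $N.W$ is independent in $N.W$ hence in $N$, also independent in $N$; so $B\in\mathcal I_M\cap\mathcal I_N$ and it makes sense to ask about feasibility. By Corollary~\ref{cor: empty wave} we already know $\mathsf{cond}^{+}(M/W,N-W)$ holds, so the strategy is to transfer this information from the pair $(M/W,N-W)$ to the pair $(M/B,N/B)$.

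The key observation should be that contracting $B$ inside $W$ is, from the point of view of everything outside $W$, the same as deleting $W$ — precisely because $B$ is a base of $M\upharpoonright W$ (so $M/B$ restricted to $E\setminus W$ agrees with $M/W$ there, as the remaining elements of $W$ become loops of $M/B$) and because $B$ is a base of $N.W$ (dually, $B$ spans $W\setminus B$ in $N^{*}$, so in $N/B$ the set $W\setminus B$ is spanned by $E\setminus W$, i.e.\ the elements of $W\setminus B$ become loops of $(N/B)^{*}$; equivalently $r((N/B).(W\setminus B))=0$). Concretely I would prove: the loops of $M/B$ include all of $W\setminus B$, and $r\big((N/B).(W\setminus B)\big)=0$. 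Then I would argue that $W(M/B,N/B)\subseteq W\setminus B$: any $(M/B,N/B)$-wave $W'$ gives, after throwing away the part inside $W\setminus B$, an $(M/W,N-W)$-wave on $W'\setminus W$ — here I would invoke Observation preceding Corollary~\ref{cor: empty wave} (waves compose) together with $\mathsf{cond}^{+}(M/W,N-W)$, which forces $W'\setminus W=\varnothing$. Hence $W(M/B,N/B)$ is contained in $W\setminus B$, which consists of $M/B$-loops and has $r((N/B).(\,\cdot\,))=0$; that is exactly $\mathsf{cond}^{+}(M/B,N/B)$, so $B$ is nice feasible.

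There is one gap in the above that needs care, and I expect it to be the main obstacle: showing that an $(M/B,N/B)$-wave $W'$ really does restrict to an $(M/W,N-W)$-wave after deleting $W'\cap(W\setminus B)$. The witnessing base $B'\subseteq W'$ of $M/B\upharpoonright W'$ that is independent in $(N/B).W'$ has to be massaged so that $B'\setminus W$ witnesses a wave for $(M/W,N-W)$ on $W'\setminus W$; this uses that the elements of $W\setminus B$ are $M/B$-loops (so $B'\cap(W\setminus B)=\varnothing$ may be assumed) and that they satisfy $r((N/B).(W\setminus B))=0$ (so discarding them does not destroy the $N$-side spanning condition, via an argument like the one at the end of the proof of Lemma~\ref{one more edge}). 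Once this compatibility is established the rest is bookkeeping with the definitions of $\mathsf{cond}$, $\mathsf{cond}^{+}$ and Proposition~\ref{p: iterate feasible}.

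Alternatively — and this may be cleaner — I would first record the special case $B=W\cap(\text{a base of }M\upharpoonright W)$ is \emph{some} base, handle the general $B\in B(M,N,W)$ by noting all such $B$ are interchangeable for these loop/rank statements, and then simply cite Corollary~\ref{cor: empty wave} and Observation~\ref{o: condRestrict} in the form: $\mathsf{cond}^{+}(M/W,N-W)$ plus the identification of $M/B$ with $M/W$ up to loops yields $\mathsf{cond}^{+}(M/B,N/B)$ directly. I would present whichever of these two routes produces the shortest rigorous argument; the substance is the same.
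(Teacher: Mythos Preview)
Your proposal is correct and follows essentially the same idea as the paper's proof: both show that $W(M/B,N/B)$ cannot extend beyond $W\setminus B$ by exhibiting, from a hypothetical larger wave $W'$, a contradiction to the maximality of $W$. The paper does this more directly: it glues the witnessing set $B'$ for $W'$ to $B$ and checks that $B\cup B'$ witnesses that $W\cup W'$ is an $(M,N)$-wave (using that $B$ is $N^{*}$-spanned by $W\setminus B$ and $B'$ is $N^{*}$-spanned by $W'\setminus B'$), so $W\cup W'\subseteq W$ by maximality. Your route---passing to the quotient and showing $W'\setminus W$ is an $(M/W,N-W)$-wave, then invoking Corollary~\ref{cor: empty wave}---amounts to the same computation viewed from the other side; the ``gap'' you flag is exactly the gluing argument the paper carries out. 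One small remark: the Observation you cite (waves compose) goes in the wrong direction for your purposes; what actually forces $W'\setminus W=\varnothing$ in your route is Corollary~\ref{cor: empty wave} alone, once you have verified $W'\setminus W$ is an $(M/W,N-W)$-wave. Your alternative route via Observation~\ref{o: condRestrict} does not apply as stated, since $(M/B,N/B)$ is not of the form $(M'\upharpoonright X,\,N'.X)$ for the pair $(M/W,N-W)$; stick with the main argument.
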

\begin{proof}
Let $ B\in  B(M,N,W) $. Clearly $ B\in \mathcal{I}_M\cap \mathcal{I}_N $ because $ B\in \mathcal{I}_M\cap 
\mathcal{I}_{N.W} $ by definition and $ \mathcal{I}_{N.W} \subseteq \mathcal{I}_N $.  We know that $ W\setminus B $ is 
an $ 
(M/B, N/B) $-wave consisting of $ M/B $ loops and $ r(N.(W\setminus B))=0 $ because $ B $ is a 
base of $ N.W $. In order to show $ \mathsf{cond}^{+}(M/B, N/B) 
$, it is enough to prove that $ W\setminus B $ is actually  $ W(M/B, N/B)=:W' $. Suppose for a contradiction that $ W'\supsetneq 
W\setminus B$. Let $ B' $ be an $ N.W' $-independent base of $ M/B \upharpoonright W' $. Note that $ B\cup B' $ is a base of $ 
M \upharpoonright (W\cup 
W') $ and $ B'\cap (W\setminus B)=\varnothing $. Since $ B $ is $ N^{*} $-spanned by $ W\setminus B\subseteq (W\cup 
W')\setminus (B\cup B') $ and $ B' $ is $ N^{*} $-spanned by $ W'\setminus B'\subseteq (W\cup W')\setminus (B\cup B')  $, we 
may conclude that $ B\cup B' $ is independent in  $ N.(W\cup W') $. Thus $ W\cup W' $ is an $ (M,N) $-wave witnessed by $ 
B\cup B' $ which contradicts the maximality of $ W $. 
\end{proof}
\begin{cor}\label{cor: extNice}
Assume that $ I\in \mathcal{I}_M\cap \mathcal{I}_M $ and $ B\in B(M/I,N/I, W) $ where  $ W:=W(M/I,N/I) $.  Then $ I\cup B 
$ is a nice feasible set.
\end{cor}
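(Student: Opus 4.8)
The plan is to deduce this immediately from Propositions~\ref{prop: WB nice feasible} and~\ref{p: iterate feasible}, so the ``proof'' is really just a two-step composition of results already at hand. First I would apply Proposition~\ref{prop: WB nice feasible} to the pair $(M/I, N/I)$ in place of $(M,N)$. Since by hypothesis $W = W(M/I, N/I)$ and $B \in B(M/I, N/I, W)$, that proposition yields that $B$ is a nice feasible set with respect to $(M/I, N/I)$; in particular $\mathsf{cond}^{+}\bigl((M/I)/B,\, (N/I)/B\bigr)$ holds.

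Next I would invoke Proposition~\ref{p: iterate feasible} with $I_0 := I$ and $I_1 := B$. The hypothesis $I_0 \in \mathcal{I}_M \cap \mathcal{I}_N$ is exactly our standing assumption on $I$, and we have just verified that $I_1 = B$ is nice feasible with respect to $(M/I_0, N/I_0)$. The ``nice'' half of Proposition~\ref{p: iterate feasible} then gives that $I_0 \cup I_1 = I \cup B$ is nice feasible with respect to $(M, N)$, which is precisely the claim. One should only note in passing that $B \subseteq W \subseteq E \setminus I$, so $I \cup B$ is a disjoint union and lies in $\mathcal{I}_M \cap \mathcal{I}_N$, causing no difficulty.

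I do not expect any genuine obstacle here: both ingredients are already proved. The only point requiring a moment's care is bookkeeping with the relativized notation — that $W(M/I, N/I)$ and $B(M/I, N/I, W)$ are formed inside the matroids $M/I$ and $N/I$ on the common ground set $E \setminus I$, so that Proposition~\ref{prop: WB nice feasible} applies to them verbatim, and that feasibility/niceness ``with respect to $(M/I_0, N/I_0)$'' in Proposition~\ref{p: iterate feasible} is the same notion as the one produced by Proposition~\ref{prop: WB nice feasible}. Both are immediate from the definitions, so the argument is complete in a few lines.
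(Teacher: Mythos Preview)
Your proposal is correct and follows exactly the same approach as the paper, which simply says to combine Propositions~\ref{p: iterate feasible} and~\ref{prop: WB nice feasible}. Your added remarks on bookkeeping (relativizing to $(M/I,N/I)$ and noting $B\subseteq W\subseteq E\setminus I$) are accurate and make the one-line proof explicit.
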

\begin{proof}
Combine Propositions \ref{p: iterate feasible} and \ref{prop: WB nice feasible}.
\end{proof}

\begin{obs}\label{obs: cond+ loop delete}
If $ \mathsf{cond}^{+}(M,N) $ holds and $ L $ is a set of $ M $-loops, then $ \mathsf{cond}^{+}(M-L,N-L) $ also holds.
\end{obs}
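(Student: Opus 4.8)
The plan is to prove the sharper statement $W(M-L,N-L) = W(M,N)\setminus L$; once this is in hand, both clauses of $\mathsf{cond}^{+}(M-L,N-L)$ drop out of $\mathsf{cond}^{+}(M,N)$ with no further work.

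First I would record the inclusion $L\subseteq W:=W(M,N)$. The set $L$ is itself an $(M,N)$-wave, since $M\upharpoonright L$ consists of loops, so $\varnothing$ is its unique base and $\varnothing$ is vacuously independent in $N.L$; maximality of $W$ (Proposition~\ref{wave union}) then gives $L\subseteq W$. Put $W':=W\setminus L$. The inclusion $W'\subseteq W(M-L,N-L)$ is equally soft: $W'\subseteq W$ consists of $M$-loops, so $(M-L)\upharpoonright W'=M\upharpoonright W'$ again has $\varnothing$ as a base, trivially independent in $(N-L).W'$, whence $W'$ is an $(M-L,N-L)$-wave.

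The reverse inclusion is where the actual argument sits. Set $V:=W(M-L,N-L)$ and pick a witnessing base $B\subseteq V$ of $(M-L)\upharpoonright V$ that is independent in $(N-L).V$. Because contraction and deletion commute, $(N-L).V=\bigl(N.(V\cup L)\bigr)-L$ and $(M-L)\upharpoonright V=M\upharpoonright V$. Since $L$ consists of $M$-loops, $B$ is then also a base of $M\upharpoonright(V\cup L)$; and since the independent sets of a deletion are precisely the independent sets of the matroid that miss the deleted elements, $B$ is independent in $N.(V\cup L)$ as well. Thus $V\cup L$ is an $(M,N)$-wave, so $V\cup L\subseteq W$, i.e. $V\subseteq W'$. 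Together with the previous paragraph this gives $V=W'$.

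Finally I would read off the conclusion. The set $W(M-L,N-L)=W'$ lies in $W$, hence consists of $M$-loops, hence of $(M-L)$-loops. And $(N-L).W'=(N.W)-L$ (here using $L\subseteq W$, so $W'\cup L=W$), while a deletion cannot raise the rank, so $r\bigl((N-L).W'\bigr)\leq r(N.W)=0$. This is precisely $\mathsf{cond}^{+}(M-L,N-L)$. The only genuinely fiddly step, and the one I would treat with some care, is the minor bookkeeping behind the identities $(N-L).V=\bigl(N.(V\cup L)\bigr)-L$ and $(N-L).W'=(N.W)-L$; everything else is routine.
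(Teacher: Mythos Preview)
Your argument is correct. The paper records this as an Observation without proof, so there is nothing to compare against; your route via the sharper identity $W(M-L,N-L)=W(M,N)\setminus L$ is a clean way to fill in the details, and the minor calculations $(N-L).V=\bigl(N.(V\cup L)\bigr)-L$ and $(N-L).W'=(N.W)-L$ are handled correctly. One small remark: the inclusion $W'\subseteq W(M-L,N-L)$ is not strictly needed for the conclusion (once you know $V\cup L$ is an $(M,N)$-wave, $V\cup L\subseteq W$ already gives both clauses of $\mathsf{cond}^{+}(M-L,N-L)$, using that $r(N.W)=0$ forces $r(N.(V\cup L))=0$ for any $V\cup L\subseteq W$), but proving the full equality does no harm and is a nice byproduct.
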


 \begin{obs}\label{obs: common loops remove}
 Let $ W $ be a wave and let $ L $ be a set of common loops of $ M $ and $ N $. Then $ W\setminus L $ is also a wave and $ 
 B(M,N,W)=B(M,N,W\setminus L) $.
 \end{obs}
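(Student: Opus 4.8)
The plan is to reduce the statement to an elementary fact about direct sums, by observing that the edges in $L$ remain loops after passing to the relevant minors. We may assume $L\subseteq W$, since replacing $L$ by $L\cap W$ changes neither the hypothesis (it is still a set of common loops of $M$ and $N$) nor $W\setminus L$. First I would check that every $e\in L$ is a loop of $M\upharpoonright W$ and of $N.W$. For $M\upharpoonright W$ this is immediate, as it is a restriction of $M$. For $N.W$ one cannot argue inside $N$ directly; instead one uses the duality dictionary from Section~\ref{sec notation}: an $N$-loop is an $N^{*}$-coloop, hence $e\notin\mathsf{span}_{N^{*}}(Y)$ whenever $e\notin Y$, and since $\{e\}$ is independent in $N.W$ precisely when $e\in\mathsf{span}_{N^{*}}(W\setminus\{e\})$, it follows that $\{e\}$ is dependent in $N.W$.

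Next I would record the minor identity $N.(W\setminus L)=(N.W)-L$, which holds because $L$ is a set of loops of $N.W$ (so deleting $L$ from $N.W$ coincides with contracting it) together with $(N.W)/L=N/((E\setminus W)\cup L)=N.(W\setminus L)$ from the minor calculus. Combining this with the previous paragraph, both minors in play split off the loops of $L$:
\[ M\upharpoonright W=\big(M\upharpoonright(W\setminus L)\big)\oplus U_{L,0},\qquad N.W=\big(N.(W\setminus L)\big)\oplus U_{L,0}. \]

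Finally, the bases of a direct sum are exactly the unions of bases of the summands, and the unique base of $U_{L,0}$ is $\varnothing$; hence the bases of $M\upharpoonright W$ are precisely those of $M\upharpoonright(W\setminus L)$, the bases of $N.W$ are precisely those of $N.(W\setminus L)$, and a subset of $W\setminus L$ is independent in $N.W$ iff it is independent in $N.(W\setminus L)$. Feeding this into the definitions gives both assertions at once: $M\upharpoonright W$ admits an $N.W$-independent base iff $M\upharpoonright(W\setminus L)$ admits an $N.(W\setminus L)$-independent base, so $W\setminus L$ is again an $(M,N)$-wave; and the common bases of $M\upharpoonright W$ and $N.W$ are exactly the common bases of $M\upharpoonright(W\setminus L)$ and $N.(W\setminus L)$, i.e.\ $B(M,N,W)=B(M,N,W\setminus L)$. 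There is essentially no obstacle here; the single point deserving a line of care is the passage of the loop property from $N$ to the contraction $N.W$, which is why one phrases it through the $N^{*}$-span rather than arguing in $N$ itself.
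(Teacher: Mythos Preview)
Your proof is correct. The paper records this statement as an Observation and gives no proof at all, so there is no ``paper's approach'' to compare against; your argument supplies exactly the routine verification that the author left to the reader, and the one step that genuinely deserves the line of care you gave it---that an $N$-loop remains a loop in the contraction $N.W$, argued via the $N^{*}$-span characterisation of independence in $N.W$---is handled correctly.
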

\begin{lem}\label{l: wave modify}
Let $ W $ be a wave and $  L\subseteq W $ such that $ L $ consists of $ M $-loops with $ r(N.L)=0 $. Then $ W\setminus L $ is 
an $ (M-L, N-L, W\setminus L) $-wave with \[ 
B(M,N,W)=B(M-L, N-L, W\setminus L). \]
\end{lem}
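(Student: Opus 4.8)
The plan is to reduce the whole statement to two elementary minor identities plus some bookkeeping with the span operator, exploiting that contraction and deletion of loops are essentially invisible. First I would record the identities $(M-L)\upharpoonright(W\setminus L)=M\upharpoonright(W\setminus L)$ (a restriction of a restriction) and $(N-L).(W\setminus L)=(N.W)-L$; the latter holds because $L\subseteq W$ forces $(E\setminus L)\setminus(W\setminus L)=E\setminus W$, and because contraction and deletion commute on the disjoint sets $E\setminus W$ and $L$. I would also restate the hypothesis in the form actually needed: since $(N.W).L=N/(E\setminus L)=N.L$, the assumption $r(N.L)=0$ says precisely that every $\ell\in L$ is a loop of $(N.W)/(W\setminus L)$, i.e. $L\subseteq\mathsf{span}_{N.W}(W\setminus L)$; in other words $W\setminus L$ already spans $W$ in $N.W$. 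Throughout I would use the standard fact that, for $S\subseteq E\setminus L$, $\mathsf{span}_{M-L}(S)=\mathsf{span}_M(S)\setminus L$, which follows from ``the circuits of $M-L$ are exactly the circuits of $M$ avoiding $L$''.

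For the wave claim: as $W$ is an $(M,N)$-wave there is a base $B_0$ of $M\upharpoonright W$ which is independent in $N.W$. Since $L$ consists of $M$-loops, $B_0\cap L=\varnothing$, so $B_0\subseteq W\setminus L$; then $B_0$ is $M$-independent with $\mathsf{span}_M(B_0)\supseteq W\supseteq W\setminus L$, hence a base of $M\upharpoonright(W\setminus L)$, and $B_0$ stays independent in $(N.W)-L=(N-L).(W\setminus L)$ because independence is inherited by deletions. So $B_0$ witnesses that $W\setminus L$ is an $(M-L,N-L)$-wave.

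For the equality $B(M,N,W)=B(M-L,N-L,W\setminus L)$ I would check both inclusions by the same routine. Any $B$ in either set is $M$-independent, hence disjoint from $L$, hence $B\subseteq W\setminus L$; and because $L$ consists of $M$-loops, $B$ is a base of $M\upharpoonright W$ iff $\mathsf{span}_M(B)\supseteq W\setminus L$ iff $B$ is a base of $M\upharpoonright(W\setminus L)$ (the elements of $L$ are spanned by $\varnothing$, hence by $B$, automatically). On the $N$-side, for $B\subseteq W\setminus L$ independence in $N.W$ and in $(N.W)-L$ coincide, and $\mathsf{span}_{(N.W)-L}(B)=\mathsf{span}_{N.W}(B)\setminus L$; so if $B$ spans $W$ in $N.W$ then it spans $W\setminus L$ in $(N.W)-L$, and conversely, if it spans $W\setminus L$ in $(N.W)-L$ then $\mathsf{span}_{N.W}(B)\supseteq W\setminus L$, whence applying the closure operator $\mathsf{span}_{N.W}$ and using its idempotence together with $L\subseteq\mathsf{span}_{N.W}(W\setminus L)$ yields $\mathsf{span}_{N.W}(B)=W$. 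Combining the $M$-side and $N$-side equivalences, $B$ is a common base of $M\upharpoonright W$ and $N.W$ iff it is a common base of $M\upharpoonright(W\setminus L)$ and $(N.W)-L$, which is the assertion.

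I do not expect a real obstacle here; the only step that is more than pure unravelling of definitions is the last ``conversely'' on the $N$-side — upgrading ``$B$ spans $W\setminus L$ in $(N.W)-L$'' to ``$B$ spans $W$ in $N.W$'' — and this is exactly the point where the hypothesis $r(N.L)=0$ is consumed, via idempotence of the span operator. Everything else is careful but mechanical manipulation of restriction, contraction and deletion.
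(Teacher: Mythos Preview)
Your proof is correct. The paper's argument is logically equivalent but is organized around the dual $N^{*}$ rather than the minor identity $(N-L).(W\setminus L)=(N.W)-L$ that you establish. Concretely, the paper translates ``$B$ is independent in $N.W$'' as ``$B\subseteq\mathsf{span}_{N^{*}}(W\setminus B)$'' and ``$B$ is spanning in $N.W$'' as ``$W\setminus B$ is $N^{*}$-independent'', and reads $r(N.L)=0$ as ``$L$ is $N^{*}$-independent''; with these translations, both the independence and the spanning equivalences become one-liners about contracting $L$ in $N^{*}$. Your route stays on the primal side, using the deletion identity and idempotence of $\mathsf{span}_{N.W}$; the one non-mechanical step you flagged (upgrading ``spans $W\setminus L$ in $(N.W)-L$'' to ``spans $W$ in $N.W$'') is exactly where the paper invokes $N^{*}$-independence of $L$. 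Your approach is a bit longer but arguably more transparent to a reader not fluent in the dual dictionary; the paper's is more compact because dualizing puts independence and spanning on the same footing.
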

\begin{proof}
A set $ B $ is a base of  $ M\upharpoonright W $ iff it is a base of 
$ M\upharpoonright (W\setminus L) $ because $ L $ consists of $ M $-loops. A $ B\subseteq W\setminus L $ is $ N.W 
$-independent iff $ B\subseteq\mathsf{span}_{N^{*}}(W\setminus B) $.  This holds if and only if  $ 
B\subseteq\mathsf{span}_{N^{*}/L}(W\setminus 
(B\cup L)) $ i.e. $ B  $ is  independent in $ (N-L).(W\setminus L) $.   Note that $ r(N.L)=0 $ is equivalent with the $ 
N^{*} $-independence of $ L $. Thus for $ B\subseteq W\setminus L $,  $ W\setminus B $ is $ N^{*} $-independent iff 
$ W\setminus (B\cup L) $ is $ N^{*}/L $-independent. It means that $ B $ is spanning in $ N.W $ iff it is spanning in $ 
(N-L).(W\setminus L) $.

Thus the sets that are witnessing that $ W\setminus L $ is 
an $ (M-L, N-L, W\setminus L) $-wave are exactly those that are witnessing  that  $ W $ is an $ (M,N) $-wave, moreover, $ 
B(M,N,W)=B(M-L, N-L, W\setminus L) $ holds.

\end{proof}

\begin{lem}\label{l: minorsChanged}
Assume that $ X_j, Y_j \subseteq E $ for $ j\in \{ 0,1 \} $ where $ X_j\sqcup Y_j=Z $ for $ j\in \{ 0,1 \} $, furthermore 
$ \mathsf{span}_M(X_0)=\mathsf{span}_M(X_1) $ and $ \mathsf{span}_{N^{*}}(Y_0) =\mathsf{span}_{N^{*}}(Y_1) $. 
Then for every $ X\subseteq  E\setminus Z $ we have\[ B(M/X_0-Y_0, N/X_0-Y_0, X)=B(M/X_1-Y_1, N/X_1-Y_1, X). \]
\end{lem}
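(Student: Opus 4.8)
The plan is to reduce the statement to the observation that the pair of matroids defining the set $B(M/X_j-Y_j,\,N/X_j-Y_j,\,X)$ — namely the restriction $(M/X_j-Y_j)\upharpoonright X$ and the contraction $(N/X_j-Y_j).X$ — does not depend on $j\in\{0,1\}$. Once that is known, the sets of common bases coincide trivially. Throughout I would use only the basic minor identities recalled in Section~\ref{sec notation}: $\mathsf{span}_{M/A}(S)=\mathsf{span}_M(S\cup A)\setminus A$ and $\mathsf{span}_{M-B}(S)=\mathsf{span}_M(S)\setminus B$ (for $S$ in the relevant ground set), the commutativity of deletion and contraction, $(M/A-B)^{*}=M^{*}/B-A$, the characterisation ``$I\subseteq X$ is independent in $M.X$ iff $I\subseteq\mathsf{span}_{M^{*}}(X\setminus I)$'', and ``$S\subseteq X$ spans $X$ in $M.X$ iff $X\setminus S$ is independent in $M^{*}$''.

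First I would record the closure fact that for every $A\subseteq E$ and every $T\subseteq E$ one has $\mathsf{span}_M(A\cup T)=\mathsf{span}_M(\mathsf{span}_M(A)\cup T)$; the inclusion ``$\subseteq$'' is monotonicity and ``$\supseteq$'' follows from monotonicity together with idempotence of the closure operator. Applying this with $A=X_0$ and $A=X_1$ and using the hypothesis $\mathsf{span}_M(X_0)=\mathsf{span}_M(X_1)$ gives $\mathsf{span}_M(X_0\cup T)=\mathsf{span}_M(X_1\cup T)$ for all $T$, and symmetrically, from $\mathsf{span}_{N^{*}}(Y_0)=\mathsf{span}_{N^{*}}(Y_1)$, that $\mathsf{span}_{N^{*}}(Y_0\cup T)=\mathsf{span}_{N^{*}}(Y_1\cup T)$ for all $T$.

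Next I would unwind ``$B\in B(M/X_j-Y_j,\,N/X_j-Y_j,\,X)$'' into four conditions on a set $B\subseteq X$, each of which only references spans of the shape $\mathsf{span}_M(X_j\cup T)$ or $\mathsf{span}_{N^{*}}(Y_j\cup T)$ with $T\subseteq X$. Since $X\cap Y_j=\varnothing$ we have $(M/X_j-Y_j)\upharpoonright X=(M/X_j)\upharpoonright X$, so ``$B$ is independent'' becomes ``$b\notin\mathsf{span}_M(X_j\cup(B-b))$ for all $b\in B$'' and ``$B$ spans $X$'' becomes ``$X\subseteq\mathsf{span}_M(X_j\cup B)$''. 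On the $N$-side one computes $(N/X_j-Y_j)^{*}=N^{*}/Y_j-X_j$ and $\mathsf{span}_{N^{*}/Y_j-X_j}(S)=\mathsf{span}_{N^{*}}(S\cup Y_j)\setminus Z$; feeding this into the two boxed characterisations of independence and spanning in a contraction onto $X$, and using $X\subseteq E\setminus Z$ to discard the ``$\setminus Z$'', yields: ``$B$ is independent in $(N/X_j-Y_j).X$'' iff $B\subseteq\mathsf{span}_{N^{*}}((X\setminus B)\cup Y_j)$, and ``$B$ spans $X$'' iff (the independence of $X\setminus B$ in $N^{*}/Y_j$, equivalently in $N^{*}/Y_j-X_j$) $e\notin\mathsf{span}_{N^{*}}(Y_j\cup((X\setminus B)-e))$ for all $e\in X\setminus B$. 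By the second paragraph each of these four conditions is insensitive to swapping the index $0$ for $1$, so the family of sets $B\subseteq X$ satisfying all four is the same for $j=0$ and $j=1$, which is exactly the asserted equality.

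The only delicate point is bookkeeping with the minor operations: one must be sure that, after restricting to and contracting onto a ground set $X\subseteq E\setminus Z$, the auxiliary pieces $X_j$ and $Y_j$ of $Z$ genuinely drop out of every span occurring in the four conditions (this is where the identities $\mathsf{span}_{M-B}(S)=\mathsf{span}_M(S)\setminus B$ and $\mathsf{span}_{M/A}(S)=\mathsf{span}_M(S\cup A)\setminus A$, plus $X\cap Z=\varnothing$, are used repeatedly). No combinatorial obstacle arises beyond this routine verification.
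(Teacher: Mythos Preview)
Your proof is correct and follows essentially the same idea as the paper. The paper's proof is a single line --- it asserts that $M/X_0-Y_0=M/X_1-Y_1$ and $N/X_0-Y_0=N/X_1-Y_1$ as matroids on $E\setminus Z$ --- whereas you unwind this by expressing membership in $B(\cdot,\cdot,X)$ through four span conditions of the form $\mathsf{span}_M(X_j\cup T)$ and $\mathsf{span}_{N^{*}}(Y_j\cup T)$ and then invoking your closure identity $\mathsf{span}_M(X_0\cup T)=\mathsf{span}_M(X_1\cup T)$; this is exactly the content behind the paper's assertion, spelled out explicitly.
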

\begin{proof}
The matroids $ M/X_0-Y_0 $ and  $ M/X_1-Y_1$ are the same as well as the matroids $ N/X_0-Y_0$ and  $ N/X_1-Y_1 $.
\end{proof}

\section{Reductions}\label{s: reduction}
We repeat here our main results for convenience:
\PC*

\MI*

\subsection{Matroid Intersection with a finitary \texorpdfstring{$\boldsymbol{M} $}{M}}
As we mentioned, the  method by Bowler and Carmesin used to prove  Corollary 3.9 in \cite{bowler2015matroid} works not only 
for the class of all matroids but can be adapted for every class  closed under certain operations. We apply their 
technique to obtain the following reduction:

\begin{lem}\label{l: M finitary}
Theorems \ref{t: main result0} and \ref{t: main result} are implied by the special case of Theorem \ref{t: main result} where $ 
M\in \mathfrak{F} $.
\end{lem}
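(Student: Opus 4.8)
The plan is to derive both theorems from the case $ M\in\mathfrak{F} $ by invoking the Bowler--Carmesin correspondence between packing/covering and matroid intersection (Corollary 3.9 in \cite{bowler2015matroid}), used twice. The correspondence is constructive: in both directions it builds the new matroids from the given ones only by taking duals, direct sums and minors, by adjoining loops and coloops, and by using partition matroids of finite block-rank as auxiliary objects. Consequently it relativises to any class of matroids closed under these operations which contains such partition matroids, and the first task is to record that $ \mathfrak{F}\oplus\mathfrak{F}^{*} $ is such a class. Indeed, the dual of a finitary matroid is cofinitary and conversely; a direct sum of members of $ \mathfrak{F}\oplus\mathfrak{F}^{*} $ is again in $ \mathfrak{F}\oplus\mathfrak{F}^{*} $ (collect the finitary summands into one finitary matroid and the cofinitary summands into one cofinitary matroid); restrictions and contractions of finitary resp.\ cofinitary matroids are finitary resp.\ cofinitary, since $ \mathfrak{F} $ is closed under minors and $ \mathfrak{F}^{*} $ is its dual class; adjoining loops or coloops preserves both $ \mathfrak{F} $ and $ \mathfrak{F}^{*} $, hence $ \mathfrak{F}\oplus\mathfrak{F}^{*} $; and a partition matroid with blocks of finite rank is a direct sum of finite-rank uniform matroids, whose circuits are finite, so it is finitary. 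One also checks that when the input matroids live on a countable edge set, so do all the matroids produced by the correspondence, the only delicate point being the cardinality of the index set, which I return to at the end.

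Next I would reduce Theorem \ref{t: main result} to its case $ M\in\mathfrak{F} $. The conclusion of Theorem \ref{t: main result} is symmetric under exchanging $ M\leftrightarrow N $ together with $ E_M\leftrightarrow E_N $, so this case is equivalent to the case $ N\in\mathfrak{F} $. Given $ M,N\in(\mathfrak{F}\oplus\mathfrak{F}^{*})(E) $ with $ E $ countable, apply the correspondence in the direction that converts a matroid intersection problem into a packing/covering problem: the intersection problem for $ (M,N) $ becomes the packing/covering problem for a two-element family whose members are obtained from $ M,N $ by dualising one of them (possibly adjoining some loops or coloops), hence again lie in $ \mathfrak{F}\oplus\mathfrak{F}^{*} $ and on a countable set. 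Applying the correspondence to this two-element family in the other direction turns it into the intersection problem for a pair on two disjoint copies of the edge set, in which one matroid is the direct sum of the two members of the family --- still in $ \mathfrak{F}\oplus\mathfrak{F}^{*} $ --- and the other is the partition matroid whose blocks are the two-element ``columns'', which, having blocks of size $ 2 $, is finitary. After exchanging the two matroids this is an instance of the case $ M\in\mathfrak{F} $, which proves Theorem \ref{t: main result}.

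For Theorem \ref{t: main result0}, given a family $ (M_i:i\in\Theta) $ with $ E $ countable and all $ M_i\in\mathfrak{F}\oplus\mathfrak{F}^{*} $, the correspondence turns its packing/covering problem into the intersection problem for the pair $ \bigl(\bigoplus_{i\in\Theta}M_i,\ P\bigr) $, where $ P $ is the partition matroid on $ \bigsqcup_{i\in\Theta}E_{(i)} $ whose blocks are the columns $ \{e_{(i)}:i\in\Theta\} $; here $ \bigoplus_{i}M_i\in\mathfrak{F}\oplus\mathfrak{F}^{*} $ and $ P $, being a direct sum of $ 1 $-uniform matroids, is finitary, so after exchanging the two matroids this would fall under the case $ M\in\mathfrak{F} $. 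The catch is that $ \bigsqcup_{i\in\Theta}E_{(i)} $ is countable only when $ \Theta $ is, so one must first reduce Theorem \ref{t: main result0} to a countable index set; I expect this, together with the relativisation checks above, to be where the actual work lies. The index-set reduction should follow from a compactness-type argument using that $ E $ is countable --- a covering uses only countably many of the $ I_i $ and a packing only countably many nonempty $ S_i $ --- for instance by passing to a countable elementary submodel containing $ E $ and the relevant data; alternatively one verifies that the matroid intersection argument of the later sections already applies to pairs of matroids in $ \mathfrak{F}\oplus\mathfrak{F}^{*} $ all of whose components are countable, which is precisely the shape of $ \bigl(\bigoplus_i M_i,P\bigr) $. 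Either way the resulting instance has a countable ground set and a finitary partner matroid, so the reduction is complete.
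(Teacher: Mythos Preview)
Your overall route is the paper's route: both reductions are instances of the Bowler--Carmesin correspondence, and the paper simply carries out the two directions explicitly (packing/covering for a family $\Rightarrow$ intersection with a $1$-uniform partition matroid on the column side; intersection for $(M,N)\Rightarrow$ packing/covering for the pair $(M,N^{*})$) rather than citing the correspondence as a black box. Your double application for Theorem~\ref{t: main result} and your single application for Theorem~\ref{t: main result0} amount to the same thing, so on that level the proposal is fine.

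The genuine gap is the reduction to countable $\Theta$, and neither of your suggested fixes works as stated. Passing to a countable elementary submodel $\mathcal{M}$ and solving for $\Theta'=\Theta\cap\mathcal{M}$ gives you a partition $E=E_p\sqcup E_c$ together with a packing and covering indexed by $\Theta'$; to extend the packing you must set $S_i=\varnothing$ for $i\notin\Theta'$, which requires $r_{M_i}(E_p)=0$ for all such $i$. Elementarity gives you no control over this: $E_p$ is produced by the solution, not determined in advance, and there is no reason every $e\in E_p$ should be a loop in every $M_i$ with $i\notin\mathcal{M}$. Your alternative --- that the later proof already handles matroids whose components are countable --- fails for the same instance: the components of the partition matroid $P=\bigoplus_{e\in E}U_{\{e\}\times\Theta,1}$ are exactly the columns $\{e\}\times\Theta$, which are uncountable when $\Theta$ is.

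The paper's reduction is short and elementary and worth knowing: let $E'$ be the set of edges that are non-loops in only countably many of the $M_i$, and let $\Theta'$ be the (countable) set of indices $i$ for which some $e\in E'$ is a non-loop in $M_i$. Solve packing/covering for $(M_i\upharpoonright E':i\in\Theta')$, take $E_p:=E'_p$ and $E_c:=E\setminus E_p$. For $i\notin\Theta'$ every edge of $E'\supseteq E_p$ is an $M_i$-loop, so $S_i:=\varnothing$ completes the packing. Each $e\in E\setminus E'$ is a non-loop in uncountably many $M_i$, hence in uncountably many $i\in\Theta\setminus\Theta'$; an injection $e\mapsto i(e)$ into such indices lets you set $I_{i(e)}:=\{e\}$ and complete the covering. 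This is the step you should replace your submodel sketch with.
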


\begin{proof}
First we show that one can assume without loss of generality in the proof of Theorem \ref{t: main result0} that $ \Theta  $ is 
countable. To 
do so let  \[ E':=\{ e\in E: \left|\{i\in 
\Theta: \{ e \}\in \mathcal{I}_{M_i} \}\right|\leq \aleph_0 \} \]  and 
\[  \Theta':= \{ i\in \Theta: (\exists e\in E' ) (\{ e \}\in \mathcal{I}_{M_i}) \}. \]
We apply Theorem \ref{t: main result0} with $ E' $ and with the  countable family  $(M_i\upharpoonright E': i\in \Theta') $. 
Then we obtain a partition 
$ E'=E'_p\sqcup E'_c $ such that $ (M_i \upharpoonright E'_p: i\in \Theta') $ admits a packing $ (S_i: i\in \Theta') $ and 
$ (M_i\upharpoonright E'. E_c: i\in \Theta') $ admits a covering $ (I_i: i\in \Theta') $. Let $ E_p:=E'_p $ and $ E_c:= E\setminus 
E_p=E'_c\cup (E\setminus E')  $. By construction $ r_{M_i}(E')=0 $ for $ i\in 
\Theta \setminus \Theta' $. Thus by letting $ S_i:=\varnothing $  for $ i\in \Theta \setminus \Theta' $ the 
family $ (S_i: i\in \Theta) $ is a packing w.r.t. $ (M_i \upharpoonright E_p: i\in \Theta) $.  
Let  $ g: E\setminus E'\rightarrow 
\Theta\setminus \Theta' $ be injective. For $ i\in \Theta \setminus \Theta'  $, we take $ I_i:=\{ g^{-1}(i) \}$ if $ i\in 
\mathsf{ran}(g) $ and $ I_i:=\varnothing $ otherwise. Then $ (I_i: i\in \Theta) $ is a covering for $ (M_i. E_c: i\in \Theta) $ and 
we are done.

We proceed with the proof of Theorem \ref{t: main result0} assuming  that $ \Theta $ is countable.  For $ i\in \Theta $, let $ M'_i 
$ be the matroid on $ 
E\times \{ i \} $ that we 
obtain by 
``copying'' $ M_i $ via the bijection $ e\mapsto (e,i) $.  Then for

\[M:= \bigoplus_{i\in \Theta}M_i',\ \text{ and } N:= \bigoplus_{e\in E}U_{ \{ e \}\times \Theta, 1  } \]
we have $M\in ( \mathfrak{F}\oplus \mathfrak{F}^{*})(E\times \Theta) $ and $ N\in \mathfrak{F}(E\times \Theta) $ where $ 
E\times \Theta $ is countable. Thus by assumption there is a partition $ 
E\times \Theta=E_M\sqcup E_N $ and an $ I\in \mathcal{I}_{M}\cap \mathcal{I}_{N} $ such that $ I_M:=I\cap E_M $ 
spans $ E_M $ in $ M$ and $ I_N:=I\cap E_N $ 
spans $ E_N$ in $ N$. The  $ M $-independence of $ I $ ensures that $ J_i:=\{ e\in E: (e,i)\in I \} $ is $ M_i $-independent. 
The $ N $-independence of $ I $ guarantees  that the sets $ J_i $ are pairwise disjoint. Let
$ E_c:=\{ e\in E: (\exists i\in \Theta) (e,i)\in E_N \} $. Then for each $ e\in E_c $ there must be some $ i\in \Theta $ with $ 
(e,i)\in 
I_N $ because $ E_N\subseteq \mathsf{span}_N(I_N) $. Thus the sets $ J_i $ cover $ E_c $ and so do the sets $ 
I_i:=J_i\cap E_c $. It is enough to show that $ S_i:=J_i\setminus I_i $ spans $ E_p:=E\setminus E_c $ in $ M_i $ for every $ 
i\in \Theta $. Let $ f\in E_p $ and $ i\in \Theta $ be given.  Then $ \{ f \}\times \Theta \subseteq E_M $ follows directly from the 
 definition of $ E_p $, in particular  $ (f,i)\in E_M $. We know that $(f,i)\in  
  \mathsf{span}_{M}(I_M)$ and hence  $ f\in \mathsf{span}_{M_i}(\{ e\in E: (e,i)\in I_M \}) $. Suppose for a contradiction that 
  $h\in E_c\cap \{ e\in E: (e,i)\in I_M \} $. Then for some $ j\in 
  \Theta $ we have $ (h,j)\in E_N $. Since $ (h,j)\in \mathsf{span}_N(I_N) $, we have $ (h,k)\in I_N $ for some $ k\in \Theta $. 
  But then $ (h,i), (h,k)\in I $ are distinct elements thus $ i\neq k $ which contradict the $ N $-independence of $ I $. Therefore $ 
  E_c\cap \{ e\in E: (e,i)\in I_M \}=\varnothing $. Since $ \{ e\in E: (e,i)\in I_M  \}\subseteq J_i $  by the definition of $ J_i 
  $ we conclude $  \{ e\in E: (e,i)\in I_M \}=S_i $. Therefore $ (S_i: i\in \Theta) $ is a packing for $ (M_i \upharpoonright E_p: 
  i\in \Theta) $ and $ (I_i: i\in \Theta) $  is a covering for $ (M_i.  E_c: i\in \Theta) $ as desired. 

Now we derive Theorem \ref{t: main result} from Theorem \ref{t: main result0}. To do so, we take a partition $E=E_p\sqcup 
E_c$ such that $ (S_M, S_N) $ is a packing for 
 $ (M\upharpoonright E_p, N^{*}\upharpoonright E_p) $ and $ (R_M, R_N) $ is a covering for $ (M.E_c, N^{*}.E_c ) $.  Let $ 
 I_M\subseteq S_M $ be a base of  $ M\upharpoonright E_p $ and we define $ I_N:= R_M $. By construction 
  $E_p\subseteq \mathsf{span}_M(I_M)  $ and $ I_N\in \mathcal{I}_{M.E_c} $.    We also know that
 \[ I_M\subseteq \mathsf{span}_{N^{*}}(S_N) \subseteq \mathsf{span}_{N^{*}}(E_p\setminus I_M) \]
 which means $ I_M\in \mathcal{I}_{N.E_{p}} $. 
 Finally, $R_N\in \mathcal{I}_{N^{*}.E_c} $ means that $ E_c\setminus R_N $ spans $ E_c $ in $ N $ and therefore so does $ 
 I_N=R_M\supseteq E_c\setminus R_N $.
 \end{proof}
\subsection{Finding an \texorpdfstring{$ \boldsymbol{M} $}{M}-independent base of  \texorpdfstring{$ \boldsymbol{N} 
$}{N}}
The following reformulation of the matroid intersection problem was introduced by Aharoni and Ziv in 
\cite{aharoni1998intersection} but its analogue by Aharoni was already an important tool to attack (and eventually solve in 
\cite{aharoni2009menger}) the Erdős-Menger Conjecture.
\begin{restatable}{claim}{indepB}\label{c: M-indep N-base}
  Assume that  $ M\in \mathfrak{F}(E) $ and $ N\in (\mathfrak{F}\oplus\mathfrak{F}^{*})(E) $ such that $ E $ is countable and 
   $ \mathsf{cond}^{+}(M,N) $ holds. Then there is an $ M $-independent base of $ N $.
\end{restatable}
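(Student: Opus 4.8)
The plan is to prove Claim~\ref{c: M-indep N-base} by an exhaustion argument that repeatedly augments a feasible set, using the countability of $E$ to drive the construction to completion. Fix an enumeration $E=\{e_n:n\in\mathbb{N}\}$. We will build an increasing chain $\varnothing=I_0\subseteq I_1\subseteq\cdots$ of nice feasible sets together with a decreasing chain of ``leftover waves'', maintaining at each step the invariant that $\mathsf{cond}^{+}(M/I_n,N/I_n)$ holds and that $W(M/I_n,N/I_n)$ consists of $M/I_n$-loops spanning nothing in $N/I_n$. The union $I:=\bigcup_n I_n$ will then be shown to be an $M$-independent base of $N$.

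The key point at each stage is the following. Suppose $I_n$ is nice feasible and let $W_n:=W(M/I_n,N/I_n)$; by $\mathsf{cond}^{+}$ this wave is ``trivial''. Consider the first edge $e$ in the enumeration that is not already spanned by $I_n$ in $N$ (if there is none, $I$ is already an $N$-base and we stop). I would like to find a common independent set of $M/I_n$ and $N/I_n$ that contains $e$ and is again nice feasible. Here the hypothesis $M\in\mathfrak F$ is essential: since $M/I_n$ is finitary, and $N/I_n\in\mathfrak F\oplus\mathfrak F^{*}$, I can appeal to the mixed-intersection machinery. Concretely, one splits $N/I_n=N'\oplus N''$ into its finitary and cofinitary parts; on the cofinitary side one uses Theorem~\ref{t: mixed} / Corollary~\ref{cor: applyMixed}, while on the finitary side an Edmonds-type augmenting path (Lemma~\ref{l: augP Edmonds}) from the unspanned edge $e$ either yields the desired augmentation or produces a new wave. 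Because $W(M/I_n,N/I_n)=\varnothing$ up to $M$-loops, no new wave can appear: an augmenting path must exist, and applying it we obtain $I_{n+1}':=I_n\vartriangle P$ with $e\in\mathsf{span}_{N}(I_{n+1}')$. Then I pass to $W':=W(M/I_{n+1}',N/I_{n+1}')$ and a base $B'\in B(M/I_{n+1}',N/I_{n+1}',W')$, and set $I_{n+1}:=I_{n+1}'\cup B'$, which is nice feasible by Corollary~\ref{cor: extNice}.

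There is a subtlety in the limit: the union $I=\bigcup_n I_n$ of nice feasible sets need not literally be feasible, but I only need it to be an $M$-independent base of $N$. $M$-independence is immediate from finitarity of $M$ together with the fact that each $I_n\in\mathcal I_M$ and the chain is increasing (any circuit would be finite, hence inside some $I_n$). That $I$ spans $N$ follows from the bookkeeping: the augmentation step guarantees that the $n$-th edge of the enumeration is $N$-spanned by $I$ unless it was already an $N/I_k$-loop at some earlier stage, and the $\mathsf{cond}^{+}$ invariant forces all such ``loop-type'' leftovers to lie in $\mathsf{span}_N(\text{rest})$ throughout (Observation~\ref{o: Mloop}, Corollary~\ref{cor: Mloop}). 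One has to check that contracting the growing $I$ does not destroy $\mathsf{cond}^{+}$ in the limit; this is where Proposition~\ref{p: iterate feasible} and the characterisation of $W(M,N)$ via Proposition~\ref{prop: WB nice feasible} do the work, since each finite stage is controlled and the relevant spanning relations are witnessed by finite subsets.

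The main obstacle I anticipate is precisely the augmentation step when $N/I_n$ has a genuinely infinite cofinitary part: an ordinary finite augmenting path need not terminate, so one cannot naively invoke Lemma~\ref{l: augP Edmonds} for $N/I_n$ directly. The resolution hinted at by the paper is to treat the finitary and cofinitary components of $N$ separately — run finite augmenting paths only through the finitary part of $N$ (where circuits are finite, so paths behave well), and handle the cofinitary part via Theorem~\ref{t: mixed}, exploiting that cocircuits there are finite. Making these two mechanisms cooperate on a single augmenting path, and verifying that the absence of a nontrivial wave really does forbid a ``dead end'', is the technical heart; I expect this is exactly what the ``mixed augmenting path'' development of Section~\ref{s: augP} is built to supply, so in this proof I would invoke that machinery as a black box and concentrate on the exhaustion bookkeeping.
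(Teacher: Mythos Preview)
Your outer scaffolding is right, but there is a genuine gap in how you treat the cofinitary part $E_1$ of $N$. The mixed augmenting paths of Section~\ref{s: augP} are designed for target edges $e\in E_0$ only: an augmenting path must start at $x_1\in E_0\setminus\mathsf{span}_N(I)$ and end at $x_{2n+1}\in E_0\setminus\mathsf{span}_M(I)$. The word ``mixed'' refers to the path being allowed to pass through $E_1$, not to being able to start there. Accordingly the key-lemma (Lemma~\ref{l: key-lemma}) only promises a nice feasible $I$ with $e\in\mathsf{span}_N(I)$ when $e\in E_0$, so an increasing chain $(I_n)$ built from this machinery will exhaust $E_0$ in $N$-span but has no reason to span $E_1$. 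Your appeal to Observation~\ref{o: Mloop} and Corollary~\ref{cor: Mloop} for the ``leftovers'' does not help here: those statements concern $M$-loops, not unspanned non-loop edges of $E_1$.

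The paper handles $E_1$ by a separate, non-nested construction followed by an ultrafilter limit. For each $n$ it applies Corollary~\ref{cor: applyMixed} to the pair $\bigl(M/I_n\upharpoonright(E_1\setminus I_n),\,N.(E_1\setminus I_n)\bigr)$ --- finitary against cofinitary --- to obtain an $M/I_n$-independent base $B_n$ of $(N\upharpoonright E_1)/(I_n\cap E_1)$, and sets $I_n^{+}:=I_n\cup B_n$. These $I_n^{+}$ are \emph{not} increasing, so a plain union fails; instead one fixes a free ultrafilter $\mathcal U$ on $\mathbb N$ and takes $S:=\{e:\{n:e\in I_n^{+}\}\in\mathcal U\}$. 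Finitarity of $M$ makes $S$ $M$-independent (a finite $M$-circuit in $S$ would lie in some $I_n^{+}$), the inclusion $\bigcup_n I_n\subseteq S$ gives $E_0\subseteq\mathsf{span}_N(S)$, and finitarity of $N^{*}\upharpoonright E_1$ gives $E_1\subseteq\mathsf{span}_N(S)$, since a finite circuit of $N^{*}\upharpoonright E_1$ disjoint from $S$ would be disjoint from some $I_n^{+}$. This ultrafilter step is the missing idea in your proposal. (A minor secondary point: your step $I_{n+1}':=I_n\vartriangle P$ drops the even-indexed edges of $P$, so the chain as written is not increasing; the paper instead applies the key-lemma to $(M/I_n,N/I_n)$ and sets $I_{n+1}:=I_n\cup I$, confining all symmetric differences to the interior of the key-lemma's proof.)
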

\begin{lem}\label{l: reduc2}
Claim \ref{c: M-indep N-base} implies our main results Theorems \ref{t: main result0} and \ref{t: main result}. 
\end{lem}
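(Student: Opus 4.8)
The plan is to first invoke Lemma~\ref{l: M finitary}, which reduces both main results to the special case of Theorem~\ref{t: main result} with $M\in\mathfrak{F}$, and then to prove that special case by splitting the edge set along the largest wave. So suppose $M\in\mathfrak{F}(E)$ and $N\in(\mathfrak{F}\oplus\mathfrak{F}^{*})(E)$ with $E$ countable, and put $W:=W(M,N)$, which exists by Proposition~\ref{wave union}. I will take $E_M:=W$ and $E_N:=E\setminus W$ and build the desired common independent set $I$ out of two pieces, one living on $W$ and one on $E\setminus W$.

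For the piece on $W$: since $W$ is an $(M,N)$-wave, fix an $N.W$-independent base $B_W$ of $M\upharpoonright W$; recall this means $B_W\subseteq\mathsf{span}_{N^{*}}(W\setminus B_W)$. For the piece on $E\setminus W$: by Corollary~\ref{cor: empty wave} the condition $\mathsf{cond}^{+}(M/W,N-W)$ holds, and since $\mathfrak{F}$ and $\mathfrak{F}\oplus\mathfrak{F}^{*}$ are minor-closed we have $M/W\in\mathfrak{F}(E\setminus W)$ and $N-W\in(\mathfrak{F}\oplus\mathfrak{F}^{*})(E\setminus W)$ with $E\setminus W$ countable, so Claim~\ref{c: M-indep N-base} applies and produces an $(M/W)$-independent base $B'$ of $N-W$. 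I then set $I:=B_W\cup B'$.

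What is left is to verify the four defining conditions. Two of them are immediate: $B_W$ is a base of $M\upharpoonright W$, so $I\cap E_M=B_W$ spans $E_M=W$ in $M$, and $B'$ is spanning in $N-W=N\upharpoonright(E\setminus W)$, so $I\cap E_N=B'$ spans $E_N$ in $N$. For $M$-independence of $I$: being independent in $M/W$, the set $B'$ contains no loop of $M/W$, hence $B'\cap\mathsf{span}_M(W)=\varnothing$; since $B_W$ spans $W$ in $M$, the matroids $M/B_W$ and $M/W$ induce the same matroid on $E\setminus\mathsf{span}_M(W)\supseteq B'$, so $B'$ is independent in $M/B_W$, and therefore $B_W\cup B'\in\mathcal{I}_M$. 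For $N$-independence of $I$: as $B'\cap W=\varnothing$ we have $W\setminus B_W\subseteq E\setminus(B_W\cup B')$, so $B_W\subseteq\mathsf{span}_{N^{*}}(W\setminus B_W)\subseteq\mathsf{span}_{N^{*}}(E\setminus(B_W\cup B'))$, which says exactly that $B_W$ is independent in $N.(E\setminus B')=N/B'$; combined with $B'\in\mathcal{I}_{N-W}\subseteq\mathcal{I}_N$ this gives $B_W\cup B'\in\mathcal{I}_N$.

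The actual substance of the proof lies elsewhere, namely in Lemma~\ref{l: M finitary} and in Claim~\ref{c: M-indep N-base}; the present argument is only the gluing, so no single step is a serious obstacle. The one point that deserves a careful word is the minor-closure of $\mathfrak{F}\oplus\mathfrak{F}^{*}$ used above — a finitary component remains finitary under deletion and contraction, and a cofinitary component remains cofinitary since its dual is finitary and finitary matroids are minor-closed — which is what makes Claim~\ref{c: M-indep N-base} legitimately applicable to the pair $(M/W,N-W)$.
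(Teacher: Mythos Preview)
Your proof is correct and follows essentially the same approach as the paper: reduce via Lemma~\ref{l: M finitary}, take $E_M$ to be the largest wave, use a witness for it as $I_M$, and apply Claim~\ref{c: M-indep N-base} on the complement to obtain $I_N$. The paper's own proof is considerably terser and leaves the verification that $I:=I_M\cup I_N\in\mathcal{I}_M\cap\mathcal{I}_N$ implicit; your detailed justification of those two independences is a welcome addition and entirely in line with the intended argument.
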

\begin{proof}
By Lemma \ref{l: M finitary} it is enough to show that the special case of Theorem \ref{t: main result} where $ M\in 
\mathfrak{F} $ follows from Claim \ref{c: M-indep N-base}.
To do so, let  $ E_M:=W(M,N) $ and let $ I_M\subseteq E_M $ be a witness that $ E_M $ is a wave. For $ E_N:=E\setminus 
E_M $, we have $ 
M/E_M\in \mathfrak{F}(E_N) $  and 
$ N-E_M\in (\mathfrak{F}\oplus \mathfrak{F}^{*})(E_N) $, furthermore, $ \mathsf{cond}^{+}(M/W,N-W) $ holds (see 
Corollary \ref{cor: empty wave}). By Claim \ref{c: M-indep N-base}, there is an $ M/E_M $-independent base $ I_N $ of $ 
N-E_M $. Then $ I\in \mathcal{I}_M\cap \mathcal{I}_{N}$ and $ E=E_M\sqcup E_N $ such that $ I_M:=I\cap E_M $ spans 
  $ E_M $ in $ M $ and  $ I_N:=I\cap E_N $ spans $ E_N $ in $ N $ as desired. 
\end{proof}
\subsection{Reduction to a key-lemma}
From now on we assume that $ M\in \mathfrak{F}(E) $ and $ N\in (\mathfrak{F}\oplus\mathfrak{F}^{*})(E) $ where $ E $ is 
countable.  Let $ 
\boldsymbol{E_0} $ be 
the union of the finitary components of $ N $ and let  
 $ \boldsymbol{E_1}:=E\setminus E_0 $.  Note that $ N\upharpoonright E_0 $ is finitary,  
$ N\upharpoonright E_1 $ is cofinitary and no $ N $-circuit meets both $ E_0 $ and $ E_1 $. 

\begin{restatable}[key-lemma]{lem}{keylemma}\label{l: key-lemma}
  If $ \mathsf{cond}^{+}(M,N) $ holds, then for every $ e\in E_0 $  there is a nice 
  feasible $ I $ with $ e\in \mathsf{span}_{N}(I) $.
\end{restatable}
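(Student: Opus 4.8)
The goal is: assuming $\mathsf{cond}^{+}(M,N)$, for a fixed $e\in E_0$ we want a nice feasible $I$ with $e\in\mathsf{span}_N(I)$. Since $e\in E_0$ lies in a finitary component of $N$, the circuit $C_N(e,I')$ (if it exists) is finite for any $N$-independent $I'$, and this finiteness should be the lever. The natural approach is an iterative/augmenting-path construction: starting from $I_0=\varnothing$ (which is nice feasible by $\mathsf{cond}^{+}(M,N)$, via Corollary~\ref{cor: empty wave}-type reasoning), we repeatedly enlarge a nice feasible set, each time using Corollary~\ref{cor: extNice} to stay inside the class of nice feasible sets, until $e$ becomes spanned by $N$. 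The point of $\mathsf{cond}^{+}$ is that after contracting any nice feasible set the largest wave is trivial, so we always have access to an actual common base $B(M/I,N/I,W)$ on any wave, in particular on waves we manufacture.

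First I would set up the following: if $e$ is already an $N$-loop we are done with $I=\varnothing$, so assume not. If there is a nice feasible $I$ with $e\in\mathsf{span}_M(I)$, then by Corollary~\ref{cor: Mloop} (applied to $M/I,N/I$) we would get $e\in\mathsf{span}_N(I)$ as well, so we may also assume $e\notin\mathsf{span}_M(I)$ for every nice feasible $I$; in particular for $I=\varnothing$, $e$ is not an $M$-loop. Now the main move: I would try to run Edmonds' augmenting path Lemma~\ref{l: augP Edmonds} (the classical one — this is the exception the text flags in Section~\ref{s: augP}) for $\varnothing$, or rather for a current nice feasible $I$, in the matroids $M\upharpoonright X$ and $(N/I).X$ on a suitable ground set $X$ designed so that $e$ must get pulled into the span of $N$. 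Concretely: since we want $e\in\mathsf{span}_N(I)$, equivalently we want $e$ to be non-independent over $I$ in $N$, i.e.\ we want to build $I$ so that $I+e\notin\mathcal I_N$; in the finitary matroid $N\upharpoonright E_0$ this means capturing a finite circuit through $e$.

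The iteration I propose: maintain a nice feasible $I$. If $e\in\mathsf{span}_N(I)$ stop. Otherwise $I+e\in\mathcal I_N$; consider whether $I+e\in\mathcal I_M$ too. If yes, then $I+e$ is a common independent set and I would like to show it is (after possibly absorbing a wave) again nice feasible with $e\in\mathsf{span}_N(I+e)$ trivially — actually $e\in I+e\subseteq\mathsf{span}_N(I+e)$, so we'd be done, but we must check $I+e$ extends to a nice feasible set: apply Corollary~\ref{cor: extNice} with $W':=W(M/(I+e),N/(I+e))$ and $B\in B(M/(I+e),N/(I+e),W')$ — nonempty by $\mathsf{cond}^{+}$ and Corollary~\ref{cor: applyMixed}/Claim~\ref{c: M-indep N-base}-style input — to get that $(I+e)\cup B$ is nice feasible and contains $e$. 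If instead $I+e\notin\mathcal I_M$, there is a finite $M$-circuit $C_M(e,I)$, and I would perform an Edmonds-style swap along $C_M(e,I)$: pick $f\in C_M(e,I)-e$ and replace $I$ by $I-f+e$, which is again $M$-independent and $M$-spans the same set; the subtlety is restoring $N$-independence and niceness. Here is where the finitary structure of $E_0$ and the machinery around $B(M,N,W)$ (Lemmas~\ref{l: wave modify}, \ref{l: minorsChanged}, Proposition~\ref{prop: WB nice feasible}) do the real work: after such a swap one re-forms the largest wave and a common base on it to land back among nice feasible sets, and a potential/rank argument (the $M$-rank of $\mathsf{span}_M(I)$ strictly increasing, or an appropriate finite quantity associated to the finite circuit $C_M(e,I)$ decreasing) should guarantee termination — or rather, since $E$ is only countable, a careful limit/exhaustion argument à la the wave-union Proposition~\ref{wave union} to pass through $\omega$ steps.

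**Where the difficulty lies.** The genuinely hard part is the bookkeeping that keeps us inside the class of \emph{nice} feasible sets through each augmentation, especially across the swap step: naively replacing $I$ by $I-f+e$ can destroy $\mathsf{cond}^{+}$ for the contracted minor, and one must re-extract the maximal wave and a common base on it (Corollaries~\ref{cor: empty wave}, \ref{cor: extNice}) while simultaneously arguing that $e$ is making monotone progress toward $\mathsf{span}_N$. A second delicate point is termination/limit behavior: a single $e$ might require infinitely many swaps in principle, so one needs either a strict monotone invariant that must stabilize, or a transfinite construction whose limit stages are handled by the wave-union argument, together with a proof that the limit object is still nice feasible and genuinely $N$-spans $e$ — the latter being safe precisely because $e\in E_0$ forces any witnessing $N$-circuit to be \emph{finite}, hence already present at some finite stage. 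I expect the proof to interleave an Edmonds augmenting-path argument on a cleverly chosen minor with repeated applications of Corollary~\ref{cor: extNice}, and the crux is choosing the right finite "target structure" around $e$ so that the augmenting path, when it exists, demonstrably increases coverage of that structure, and when it does not exist, the resulting wave-partition already yields $e\in\mathsf{span}_N(I)$.
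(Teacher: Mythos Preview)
Your reduction via Corollary~\ref{cor: Mloop} is wrong, and this is fatal to the scheme. That corollary says only that $r\bigl(N.(\mathsf{span}_M(I)\setminus I)\bigr)=0$, meaning $\mathsf{span}_M(I)\setminus I$ is $N$-spanned by its \emph{complement in $E$}, not by $I$; so $e\in\mathsf{span}_M(I)$ does not give $e\in\mathsf{span}_N(I)$, and you have not disposed of that case. In particular when $e$ is an $M$-loop you can never put $e$ into any $M$-independent $I$, your swap move is vacuous ($C_M(e,I)=\{e\}$ has no $f\ne e$), and the iteration stalls immediately. Even for non-loop $e$ with $e\in\mathsf{span}_M(I)$, the swap $I\to I-f+e$ yields a common independent set containing $e$, but to invoke Corollary~\ref{cor: extNice} you need $B(M/(I{-}f{+}e),N/(I{-}f{+}e),W)\ne\varnothing$; Lemma~\ref{one more edge} buys exactly one contraction past a $\mathsf{cond}^+$ state, and $I-f$ is not known to be nice feasible, so you have no foothold. (Your cited justifications for non-emptiness are also off: Corollary~\ref{cor: applyMixed} requires a cofinitary $N$, and Claim~\ref{c: M-indep N-base} is precisely what the key-lemma is meant to establish.)

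The paper's route is quite different and never tries to place $e$ into $I$. It builds a sequence $(I_n)$ of nice \emph{dually safe} feasible sets using the mixed augmenting paths of Section~\ref{s: augP}, which treat $E_0$ via $N$-circuits and $E_1$ via $N^*$-circuits; dual safety is the invariant that keeps the cofinitary side under control, and Lemma~\ref{l: aug extend} (with Lemma~\ref{one more edge} doing the single-edge work inside it) guarantees re-extension to nice dually safe after each augmentation. The crux is then a global wave argument: let $X\subseteq E_0$ be the elements never $N$-spanned along the sequence, and take $W$ to be the union of the $D(I_n)$-reachability sets from each $x\in X$ for all sufficiently large $n$. A stabilisation lemma (Lemma~\ref{l: stabilazing stuff}) shows these reachability sets and their intersections with $I_n$ eventually freeze, producing a single $B$ that witnesses $W$ is an $(M,N)$-wave. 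Then $\mathsf{cond}^+(M,N)$ forces $W$ to consist of $M$-loops with $r(N.W)=0$, and together with $X\subseteq W$ and the finitariness of $N\upharpoonright E_0$ this yields $X=\varnothing$. Your instinct that a failed-augmentation alternative should produce a wave is correct, but the wave is assembled across the entire sequence rather than from a single step, and the dually-safe bookkeeping on $E_1$ that you omit is essential to the construction.
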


\begin{lem}
Lemma \ref{l: key-lemma}  implies  our main results Theorems \ref{t: main result0} and \ref{t: main result}. 
\end{lem}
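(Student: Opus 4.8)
The plan is to deduce Theorems~\ref{t: main result0} and~\ref{t: main result} from the key-lemma by proving, with the help of Lemma~\ref{l: reduc2}, the intermediate Claim~\ref{c: M-indep N-base}. So I assume $ M\in\mathfrak{F}(E) $, $ N\in(\mathfrak{F}\oplus\mathfrak{F}^{*})(E) $ with $ E $ countable and $ \mathsf{cond}^{+}(M,N) $, and the goal is to produce an $ M $-independent base of $ N $. I keep the decomposition $ N=(N\upharpoonright E_0)\oplus(N\upharpoonright E_1) $ from the paragraph preceding Lemma~\ref{l: key-lemma}, where $ N\upharpoonright E_0 $ is finitary, $ N\upharpoonright E_1 $ is cofinitary, and no $ N $-circuit meets both parts. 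The construction has two phases: first I build a nice feasible set $ I $ that spans $ E_0 $ in $ N $; then I complete $ I $ to the desired base by dealing with $ E_1 $ via the mixed intersection theorem.

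For the first phase I would fix an enumeration $ E_0=\{e_n:n\in\mathbb{N}\} $ and recursively construct a $ \subseteq $-increasing chain of nice feasible sets $ \varnothing=I_0\subseteq I_1\subseteq\cdots $ with $ e_n\in\mathsf{span}_N(I_{n+1}) $ for every $ n $. At stage $ n $, if $ e_n\in I_n\cup\mathsf{span}_N(I_n) $ I set $ I_{n+1}:=I_n $. Otherwise $ e_n\in E_0\setminus I_n $; since contraction commutes with direct sums, $ N/I_n $ still splits along $ E_0 $ and $ E_1 $ with finitary $ E_0 $-part, so $ e_n $ lies in the union of the finitary components of $ N/I_n $, and $ \mathsf{cond}^{+}(M/I_n,N/I_n) $ holds because $ I_n $ is nice feasible. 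The key-lemma (Lemma~\ref{l: key-lemma}), applied to $ (M/I_n,N/I_n) $ and $ e_n $, then yields a set $ J_n $ that is nice feasible with respect to $ (M/I_n,N/I_n) $ and satisfies $ e_n\in\mathsf{span}_{N/I_n}(J_n) $. I put $ I_{n+1}:=I_n\cup J_n $, which is nice feasible with respect to $ (M,N) $ by Proposition~\ref{p: iterate feasible}, and $ e_n\in\mathsf{span}_N(I_{n+1}) $ since contracting can only enlarge spans. Finally I set $ I:=\bigcup_n I_n $; then $ I\in\mathcal{I}_M $ because $ M $ is finitary, $ E_0\subseteq\mathsf{span}_N(I) $ by construction, and — this is the delicate step, see below — $ I $ is again nice feasible.

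In the second phase I pass to $ M/I $ and $ N/I $, for which $ \mathsf{cond}^{+} $ holds since $ I $ is nice feasible. Let $ \widehat E:=E\setminus I $ and let $ L:=\mathsf{span}_N(I)\setminus I $ be the set of $ N/I $-loops; by the first phase $ E_0\setminus I\subseteq L $, hence $ \widehat E\setminus L\subseteq E_1\setminus I $, and the bases of $ N/I $ are exactly the bases of its loopless restriction $ (N/I)\upharpoonright(\widehat E\setminus L) $. It therefore suffices to find an $ (M/I)\upharpoonright(\widehat E\setminus L) $-independent base $ B $ of $ (N/I)\upharpoonright(\widehat E\setminus L) $ and to take $ I\cup B $: the latter is $ M $-independent (as $ I\in\mathcal{I}_M $ and $ B $ is $ M/I $-independent) and an $ N $-base (as $ I\in\mathcal{I}_N $, $ B $ is $ N/I $-independent, and $ B $ spans $ N/I $). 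Now $ (M/I)\upharpoonright(\widehat E\setminus L) $ is a minor of $ M $, hence finitary, and $ (N/I)\upharpoonright(\widehat E\setminus L) $ is a restriction of the cofinitary matroid $ (N/I)\upharpoonright(E_1\setminus I) $, hence cofinitary; moreover $ E(N/I)\setminus(\widehat E\setminus L)=L $ consists of $ N/I $-loops, so restriction and contraction onto $ \widehat E\setminus L $ agree and, by Observation~\ref{o: condRestrict} applied to $ (M/I,N/I) $ and the set $ \widehat E\setminus L $, this pair satisfies $ \mathsf{cond}^{+} $. Corollary~\ref{cor: applyMixed} then supplies the base $ B $, completing the proof modulo the first phase's limit step.

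The main obstacle will be verifying that the union $ I=\bigcup_n I_n $ of a chain of nice feasible sets is itself nice feasible; concretely, that $ I $ is $ N $-independent and that $ \mathsf{cond}^{+}(M/I,N/I) $ survives the passage to the limit. The $ M $-side and the $ N\upharpoonright E_0 $-side behave well under increasing unions because $ M $ and $ N\upharpoonright E_0 $ are finitary, but $ N\upharpoonright E_1 $ has infinite circuits, so in principle an infinite $ N $-circuit lying in $ E_1 $ could emerge in $ I $ even though every $ I_n\cap E_1 $ is $ N $-independent. I expect to handle this by running the recursion more carefully — arranging, using the countability of $ E $ and the finitary dual description of $ N\upharpoonright E_1 $, that the cofinitary coordinates of the $ I_n $ stabilise so that no forbidden circuit is approached — which is precisely the point at which the finitary/cofinitary split of $ N $ must be exploited.
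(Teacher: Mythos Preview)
Your reduction to Claim~\ref{c: M-indep N-base} via Lemma~\ref{l: reduc2} and the recursive construction of the chain $ (I_n) $ of nice feasible sets are exactly as in the paper. The gap is precisely where you locate it: you need $ I:=\bigcup_n I_n $ to be nice feasible, but neither the $ N $-independence of $ I $ on $ E_1 $ nor the persistence of $ \mathsf{cond}^{+} $ under increasing unions is established, and your closing paragraph only promises to ``run the recursion more carefully'' without saying how. Both issues are genuine: an infinite $ N $-circuit in $ E_1 $ may well appear in the union even if no $ I_n $ contains one, and there is no general reason a wave in $ (M/I,N/I) $ should decompose into, or be approximated by, waves in the $ (M/I_n,N/I_n) $. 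The key-lemma as stated gives no control over $ I_n\cap E_1 $ from one stage to the next, so ``stabilising the cofinitary coordinates'' is not something you can arrange just by invoking it.

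The paper sidesteps both problems by never forming the union and never needing $ \mathsf{cond}^{+} $ at the limit. Instead it performs your Phase~2 \emph{at every finite stage}: for each $ n $ it applies Observation~\ref{o: condRestrict} and Corollary~\ref{cor: applyMixed} to $ (M/I_n,N/I_n) $ restricted to $ E_1\setminus I_n $, obtaining an $ M $-independent $ I_n^{+}\supseteq I_n $ with $ E_1\subseteq\mathsf{span}_N(I_n^{+}) $. It then takes a free ultrafilter $ \mathcal{U} $ on $ \mathbb{N} $ and sets $ S:=\{e\in E:\{n:e\in I_n^{+}\}\in\mathcal{U}\} $. Because $ M $ is finitary, any hypothetical $ M $-circuit in $ S $ is finite and hence lies in some $ I_n^{+} $, a contradiction; because $ N^{*}\upharpoonright E_1 $ is finitary, any $ N^{*} $-circuit in $ E_1 $ disjoint from $ S $ is finite and hence disjoint from some $ I_n^{+} $, again a contradiction; and $ E_0\subseteq\mathsf{span}_N(S) $ since $ \bigcup_n I_n\subseteq S $. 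So the ultrafilter limit replaces the missing limit argument: it is exactly the device that exploits the finitary/cofinitary split you allude to, and it does so without requiring $ \mathsf{cond}^{+} $ or even $ N $-independence to pass to the limit.
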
 
\begin{proof}
It is enough to show that Lemma \ref{l: key-lemma} implies Claim \ref{c: M-indep N-base} because of  Lemma \ref{l: reduc2}.  
Let us 
fix an enumeration $ \{ e_n: n\in 
\mathbb{N} \} $ of $ E_0 $. 
We build an $ \subseteq $-increasing sequence $ (I_n)  $  of  nice feasible sets starting 
with $ I_0:=\varnothing $ (who is nice feasible by $ \mathsf{cond}^{+}(M,N) $) in such a way that $ {e_n\in  
\mathsf{span}_N(I_{n+1})} $. Suppose that  $ I_n $ is already defined. 
If $ e_n\notin \mathsf{span}_N(I_n) $, then we 
apply Lemma \ref{l: key-lemma} with $ (M/I_n, N/I_n) $ and $ e_n $ and take the union of the resulting $ I $ with  $ I_{n} $ to 
obtain $ I_{n+1} $ (see Observation \ref{p: iterate feasible}), otherwise let $ I_{n+1}:=I_n $. The recursion is done. Now we 
construct an $ M $-independent  
$ I^{+}_n \supseteq I_n $  with $ E_1\subseteq  \mathsf{span}_N(I^{+}_n) $ for $ n\in \mathbb{N} $. The matroid $ M/I_n 
\upharpoonright 
(E_1\setminus I_n) $ is finitary and $ N.(E_1\setminus I_n)=(N\upharpoonright E_1)/(I_n\cap E_1)$ is cofinitary, moreover, by 
Observation \ref{o: condRestrict} 
\[ \mathsf{cond}^{+}(M/I_n, N/I_n) \Longrightarrow \mathsf{cond}^{+}(M/I_n \upharpoonright (E_1\setminus I_n),  
N.(E_1\setminus I_n )).  \] Thus by Corollary \ref{cor: applyMixed} there is an $ M/I_n $-independent base $ B_n $ of 
$ (N\upharpoonright E_1)/(E_1\cap I_n) $ and $ 
I^{+}_n:=I_n\cup B_n $ is as desired. 

Let $ \mathcal{U} $ be a free ultrafilter on $ \mathcal{P}(\mathbb{N}) $ ad we define 
\[ S:=\{ e\in E: \{ n\in \mathbb{N}: e\in I^{+}_n \}\in \mathcal{U} \}. \] Then $ S $ is $ M 
$-independent and $ N $-spanning and therefore we are done. Indeed, suppose for a contradiction that $ S $ contains an $ M 
$-circuit $ C $. For $ e\in C $, we pick a $ U_e\in 
\mathcal{U} $ with $ e\in I^{+}_n $ for $ n\in U_e $. Since $ M $ is finitary, $ C $ is finite, thus $U:= \cap \{ U_e: e\in C \}\in 
\mathcal{U} $. 
But then for $ n\in U $ we have $ C\subseteq I^{+}_n $ which contradicts the $ M $-independence of $ I^{+}_n $. Clearly, $ 
I_{n+1}\subseteq S $ for every $ n\in \mathbb{N} $ and therefore $ E_0\subseteq \mathsf{span}_N(S) $. Finally, suppose for a 
contradiction that there is some 
$ N^{*}\upharpoonright E_1 $-circuit $ C' $ with $ S\cap C'=\varnothing $. For $ e\in C' $, we can pick a $ U_e'\in 
\mathcal{U} $ with $ e\notin I^{+}_n $ for $ n\in U_e' $.   Since $ N^{*}\upharpoonright E_1 $ is finitary, $ C' $ is finite, thus 
$U':= \cap \{ U_e': e\in 
C' \}\in 
\mathcal{U} $. 
But then for $ n\in U' $ we have $  I^{+}_n\cap C=\varnothing $ which contradicts   $E_1\subseteq 
\mathsf{span}_N(I^{+}_n) $.
\end{proof}
\section{Mixed augmenting paths}\label{s: augP}
In the section we  introduce an `augmenting path' type of method and analyse it in order to show some properties we need later. 
On $ E_0 $ the definition will be
the same  as in the  proof of 
the Matroid Intersection Theorem by Edmonds \cite{edmonds2003submodular} but on $ E_1 $ we need to define it in a different 
way considering that $ N\upharpoonright E_1 $ is cofinitary. For brevity we write 
$ \boldsymbol{\overset{\circ}{\mathsf{span}}_{M}(F)} $ for $ \mathsf{span}_{M}(F)\setminus F $  
and  $ \boldsymbol{F^{j}} $ for $F\cap E_j $ 
where $ F\subseteq E $ and $ j\in \{ 0,1 \} $. 
We call an $F\subseteq E $ \emph{dually safe} if  $ F^{1} $ is spanned by $ 
\overset{\circ}{\mathsf{span}}_M(F) $ in $ N^{*} $. 

\begin{lem}\label{l: enoughAddB}
If $ I\in \mathcal{I}_M\cap \mathcal{I}_n $ is dually safe and  $ B\in B(M/I, N/I, W) $ for $ W:=W(M/I, N/I) $, then 
$ I\cup B $ is a nice dually safe feasible set.
\end{lem}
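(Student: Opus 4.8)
The plan is to deduce Lemma~\ref{l: enoughAddB} from Corollary~\ref{cor: extNice} together with the extra bookkeeping provided by the hypothesis that $I$ is dually safe. By Corollary~\ref{cor: extNice} we already know that $I\cup B$ is a nice feasible set, so the only thing left to verify is that $I\cup B$ is \emph{dually safe}, i.e.\ that $(I\cup B)^{1}$ is spanned in $N^{*}$ by $\overset{\circ}{\mathsf{span}}_M(I\cup B)$.

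First I would record what dual safety of $I$ and the choice of $B$ buy us. Since $I$ is dually safe, $I^{1}\subseteq\mathsf{span}_{N^{*}}(\overset{\circ}{\mathsf{span}}_M(I))$, and clearly $\overset{\circ}{\mathsf{span}}_M(I)\subseteq\overset{\circ}{\mathsf{span}}_M(I\cup B)$ unless some edge of $\overset{\circ}{\mathsf{span}}_M(I)$ lies in $B$; but $B\subseteq W=W(M/I,N/I)$ consists of edges \emph{not} spanned by $I$ in $M$, together with the fact that $B$ is $M/I$-independent, so $B\cap\mathsf{span}_M(I)=\varnothing$ and hence $\overset{\circ}{\mathsf{span}}_M(I)\subseteq\overset{\circ}{\mathsf{span}}_M(I\cup B)$. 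This handles the contribution of $I^{1}$. For the contribution of $B^{1}$, the key point is that $B$ is a base of $N.W$, equivalently $W\setminus B$ is $N^{*}$-independent and spans $B$ in $N^{*}$ \emph{after contracting} $I$; unravelling the contraction, $B\subseteq\mathsf{span}_{N^{*}}\big((W\setminus B)\cup I\big)$. Here $W\setminus B$ consists of $M/I$-loops, i.e.\ $W\setminus B\subseteq\mathsf{span}_M(I\cup B)$, and since these edges are not in $I\cup B$ (they are $M/I$-loops distinct from the members of $B$, and disjoint from $I$) we get $W\setminus B\subseteq\overset{\circ}{\mathsf{span}}_M(I\cup B)$.

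Putting these together: $B^{1}\subseteq B\subseteq\mathsf{span}_{N^{*}}\big((W\setminus B)\cup I\big)$, and $W\setminus B\subseteq\overset{\circ}{\mathsf{span}}_M(I\cup B)$ while $I^{1}\subseteq\mathsf{span}_{N^{*}}(\overset{\circ}{\mathsf{span}}_M(I\cup B))$. The one remaining subtlety is the part of $I$ lying in $E_0$: those edges need not be $N^{*}$-spanned by $\overset{\circ}{\mathsf{span}}_M(I\cup B)$, so I cannot simply write $I\subseteq\mathsf{span}_{N^{*}}(\overset{\circ}{\mathsf{span}}_M(I\cup B))$. Instead I would argue componentwise using $E=E_0\sqcup E_1$ and the fact that no $N$-circuit (equivalently, no $N^{*}$-cocircuit, hence nothing relevant for $N^{*}$-spanning across the two parts) meets both $E_0$ and $E_1$: when computing $\mathsf{span}_{N^{*}}$ of a set, only its trace on each part matters, and the $E_1$-trace of $(W\setminus B)\cup I$ is $(W\setminus B)^{1}\cup I^{1}$. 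So $B^{1}\subseteq\mathsf{span}_{N^{*}}\big((W\setminus B)^{1}\cup I^{1}\big)\subseteq\mathsf{span}_{N^{*}}\big(\overset{\circ}{\mathsf{span}}_M(I\cup B)\big)$ using monotonicity and idempotence of $\mathsf{span}_{N^{*}}$ together with the two inclusions above. Combined with $I^{1}\subseteq\mathsf{span}_{N^{*}}(\overset{\circ}{\mathsf{span}}_M(I\cup B))$, this gives $(I\cup B)^{1}=I^{1}\cup B^{1}\subseteq\mathsf{span}_{N^{*}}(\overset{\circ}{\mathsf{span}}_M(I\cup B))$, which is exactly dual safety of $I\cup B$.

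The main obstacle I anticipate is being careful about the interaction between the contraction by $I$ and the $\overset{\circ}{\mathsf{span}}_M(\cdot)$ operator: I must make sure that the $M/I$-loops of $W\setminus B$ genuinely land in $\overset{\circ}{\mathsf{span}}_M(I\cup B)$ rather than merely in $\mathsf{span}_M(I\cup B)$, and that none of the edges I am trying to place in $\overset{\circ}{\mathsf{span}}_M(I\cup B)$ accidentally belong to $I\cup B$ itself; this is where the precise definition of $W(M/I,N/I)$ and the $M/I$-independence of $B$ do the work. Everything else is monotonicity/idempotence of closure operators and the $E_0$–$E_1$ splitting, which is routine given the structural remark recorded just before the key-lemma.
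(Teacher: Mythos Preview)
Your proof is correct and follows the same strategy as the paper: invoke Corollary~\ref{cor: extNice} for nice feasibility, then verify dual safety of $I\cup B$ by handling $I^{1}$ and $B^{1}$ separately. One simplification you missed: since $(N/I).W=N.W$, the dual statement is that $W\setminus B$ is a base of $N^{*}\upharpoonright W$, hence $B\subseteq\mathsf{span}_{N^{*}}(W\setminus B)$ \emph{directly}---no $I$ enters, so your componentwise $E_0/E_1$ detour is unnecessary (this is exactly how the paper argues). Also a small slip: $W\setminus B$ consists of $M/(I\cup B)$-loops, not $M/I$-loops, though your conclusion $W\setminus B\subseteq\overset{\circ}{\mathsf{span}}_M(I\cup B)$ is unaffected.
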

\begin{proof}
We already know by Corollary \ref{cor: extNice} that $ I\cup B $ is a nice feasible set. By using that $ I $ is dually safe and $ 
\overset{\circ}{\mathsf{span}}_M(I)\subseteq\overset{\circ}{\mathsf{span}}_M(I\cup B) $ we get
\[ I^{1}\subseteq \mathsf{span}_{N^{*}}(\overset{\circ}{\mathsf{span}}_M(I))\subseteq  
\mathsf{span}_{N^{*}}(\overset{\circ}{\mathsf{span}}_M(I\cup B)). \]

Since $ B\in B(M/I, N/I, W) $ we have $ W\setminus B\subseteq \overset{\circ}{\mathsf{span}}_M(I\cup B) $ and $ B $ is a 
base of $ N.W $. The latter can be rephrased as `$ W\setminus B $ is a base of $ N^{*}\upharpoonright W $'. By combining 
these $ \overset{\circ}{\mathsf{span}}_M(I\cup B) $  spans $ B $ in $ N^{*} $. Therefore 
$(I\cup B)\cap E_1 \subseteq \mathsf{span}_{N^{*}}(\overset{\circ}{\mathsf{span}}_M(I\cup B)) $, which means that $ I\cup 
B $ is dually safe. 
\end{proof}
\begin{prop}\label{prop: IE_1 common base}
For a  dually safe feasible $ I $, $ \overset{\circ}{\mathsf{span}}_{M}(I)^{1}  $ is 
a base of $ {N^{*}\upharpoonright \mathsf{span}_{M}(I)^{1} } $. 
\end{prop}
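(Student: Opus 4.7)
The plan is to verify the two defining properties of a base for $\overset{\circ}{\mathsf{span}}_M(I)^1$ inside the restriction $N^{*}\upharpoonright \mathsf{span}_{M}(I)^1$: $N^*$-independence, and spanning the complementary set. Since $\mathsf{span}_M(I)^1$ partitions as $\overset{\circ}{\mathsf{span}}_M(I)^1 \sqcup I^1$, the complementary set is simply $I^1$.

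For independence, I would extract it from the feasibility of $I$ via Corollary \ref{cor: Mloop}, which gives $r(N.L)=0$ for $L:=\overset{\circ}{\mathsf{span}}_M(I)=\mathsf{span}_M(I)\setminus I$ (these being the $M/I$-loops). As recalled in the preliminaries, a $T\subseteq X$ spans $X$ in $N.X$ iff $X\setminus T$ is $N^{*}$-independent; applying this with $T=\varnothing$ and $X=L$ yields that $L$ itself is $N^{*}$-independent, and hence so is the subset $\overset{\circ}{\mathsf{span}}_M(I)^1$. Independence transfers automatically to any restriction.

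For the spanning part, dual safety of $I$ gives $I^1\subseteq \mathsf{span}_{N^{*}}(\overset{\circ}{\mathsf{span}}_M(I))$. The key structural observation is that every $N^{*}$-circuit lies entirely in $E_0$ or entirely in $E_1$: because $N=N\upharpoonright E_0\oplus N\upharpoonright E_1$, also $N^{*}=(N\upharpoonright E_0)^{*}\oplus (N\upharpoonright E_1)^{*}$, and circuits of a direct sum are confined to a single summand. Consequently, for any $x\in I^1\subseteq E_1$, any witnessing $N^{*}$-circuit $C\ni x$ with $C-x\subseteq \overset{\circ}{\mathsf{span}}_M(I)$ must lie entirely in $E_1$; hence $C-x\subseteq \overset{\circ}{\mathsf{span}}_M(I)\cap E_1=\overset{\circ}{\mathsf{span}}_M(I)^1$, and $C\subseteq \mathsf{span}_M(I)\cap E_1=\mathsf{span}_M(I)^1$. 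So $C$ is a circuit of $N^{*}\upharpoonright \mathsf{span}_M(I)^1$ demonstrating that $x$ is spanned by $\overset{\circ}{\mathsf{span}}_M(I)^1$ inside the restriction.

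Combining the two, $\overset{\circ}{\mathsf{span}}_M(I)^1$ is independent in $N^{*}\upharpoonright \mathsf{span}_M(I)^1$ and spans every element of $\mathsf{span}_M(I)^1$, hence is a maximal independent set, i.e. a base. The only nontrivial bookkeeping is the translation $r(N.L)=0\Longleftrightarrow L\in\mathcal{I}_{N^{*}}$ and the direct-sum argument confining circuits to $E_1$; both are essentially immediate from the machinery already collected in the preliminary section, so no substantive obstacle is anticipated.
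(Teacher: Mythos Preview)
Your proof is correct and follows the same two-step approach as the paper: independence via Corollary~\ref{cor: Mloop} (translating $r(N.\overset{\circ}{\mathsf{span}}_M(I))=0$ into $N^{*}$-independence), and spanning via the definition of dually safe. You are simply more explicit than the paper about why the span in $N^{*}$ by $\overset{\circ}{\mathsf{span}}_M(I)$ can be replaced by $\overset{\circ}{\mathsf{span}}_M(I)^{1}$---the paper tacitly uses the same direct-sum fact (no $N^{*}$-circuit meets both $E_0$ and $E_1$) that you spell out.
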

\begin{proof}
By the definition   of `dually safe', $ \overset{\circ}{\mathsf{span}}_{M}(I)^{1}  $ spans $ N^{*}\upharpoonright 
\mathsf{span}_{M}(I)^{1}  $. Furthermore, $ r(N. \overset{\circ}{\mathsf{span}}_{M}(I))=0 $ by Corollary \ref{cor: Mloop},  
which is equivalent with the $ N^{*} $-independence of $ 
\overset{\circ}{\mathsf{span}}_{M}(I) $.
\end{proof}

For a  dually safe  feasible $ I $, 
we define an auxiliary digraph $ D(I) $ on $ E $.
Let $ xy $ be an arc of $ D(I) $ iff
one of the following possibilities occurs:
\begin{enumerate}
\item\label{item: D(I) 1} $ x\in E\setminus I $ and $ I+x $ is $ M $-dependent with $ y\in C_M(x, I)-x $,
\item\label{item: D(I) 2} $x\in I^{0}  $ and $ C_{N}(y,I) $ is well-defined and contains $ x $,
\item\label{item: D(I) 3} $x\in I^{1} $ and 
$ y\in C_{N^{*}}(x,\overset{\circ}{\mathsf{span}}_{M}(I)^{1} ) -x $ (see 
Proposition \ref{prop: IE_1 common base}).
\end{enumerate}

An augmenting path for  a nice dually safe feasible $ I $ is a $ P=\{ x_1,\dots, x_{2n+1} \} $ where 
\begin{enumerate}
[label=(\roman*)]
\item $ x_1\in  E_0\setminus \mathsf{span}_N(I) $,
\item $ x_{2n+1}\in E_0\setminus \mathsf{span}_M(I)  $,
\item $ x_kx_{k+1}\in D(I) $ for $ 1\leq k\leq  2n $,
\item\label{item: no jumping} $  x_kx_{\ell}\notin D(I) $ if $ k+1<\ell  $.
\end{enumerate}

\begin{prop}\label{p: augpath}
If $ I $ is a  dually safe feasible set and $ P=\{ x_1,\dots, x_{2n+1} \} $ is an augmenting path for $ I $, then 
$ I\vartriangle P$ is a dually safe element of $ \mathcal{I}_M\cap \mathcal{I}_N $  with
\begin{enumerate}
[label=(\Alph*)]
\item\label{item: A} $\mathsf{span}_{M}(I\vartriangle P)=\mathsf{span}_{M}(I+x_{2n+1}) $,
\item\label{item: B} $\mathsf{span}_{N}(I\vartriangle P)\cap E_0=\mathsf{span}_{N}(I+x_1)\cap E_0 $,
\item\label{item: C} $\mathsf{span}_{N^{*}} (\overset{\circ}{\mathsf{span}}_{M}(I)^{1})=\mathsf{span}_{N^{*}} 
(\overset{\circ}{\mathsf{span}}_{M}(I)^{1}\vartriangle P^{1}) $ and $ 
\overset{\circ}{\mathsf{span}}_{M}(I)^{1}\vartriangle P^{1}\in \mathcal{I}_{N^{*}} $.
\end{enumerate}
\end{prop}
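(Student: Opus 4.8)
The plan is to analyze how the symmetric difference $I \vartriangle P$ interacts with each of the three types of arcs in $D(I)$, using the fact that an augmenting path has no "chords" (condition \ref{item: no jumping}). First I would set up notation: write $P = \{x_1, \dots, x_{2n+1}\}$ with the odd-indexed $x_{2k+1}$ being the "incoming" vertices (those in $E\setminus I$ that get added) and the even-indexed $x_{2k}$ being the elements of $I$ that get removed. Since $x_1 x_2 \in D(I)$ and $x_1 \in E_0 \setminus I$, arc type \ref{item: D(I) 1} applies, so $x_2 \in C_M(x_1,I)-x_1 \subseteq I$; continuing, consecutive arcs alternate between type \ref{item: D(I) 1} (from an incoming vertex to an element of $I$) and one of types \ref{item: D(I) 2}/\ref{item: D(I) 3} (from an element of $I$ to the next incoming vertex). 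So the partition of $P$ into added and removed elements is clean, and moreover the removed elements split according to whether they lie in $I^0$ (arcs of type \ref{item: D(I) 2} leave them) or in $I^1$ (arcs of type \ref{item: D(I) 3}).

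Next I would prove \ref{item: A}, the $M$-side statement. This is essentially Edmonds' argument: using the chord-freeness condition \ref{item: no jumping}, for each removed element $x_{2k}$ one has $x_{2k} \in C_M(x_{2k-1}, I)$ but $x_{2k} \notin C_M(x_{2j-1}, I)$ for $j > k$ (a chord $x_{2j-1} x_{2k}$ would be a $D(I)$-arc of type \ref{item: D(I) 1}, forbidden). After re-indexing so that the added elements with their matched removed elements satisfy the triangularity hypothesis of Proposition \ref{prop: simult change}, that proposition gives that $(I \cup \{\text{added vertices except } x_{2n+1}\}) \setminus \{\text{removed vertices}\}$ is $M$-independent and spans the same set as $I$; adjoining $x_{2n+1}$ (which lies outside $\mathsf{span}_M(I)$ by hypothesis (ii)) then yields $I \vartriangle P$ is $M$-independent with $\mathsf{span}_M(I \vartriangle P) = \mathsf{span}_M(I + x_{2n+1})$. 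The only subtlety is checking that the elements removed via type \ref{item: D(I) 3} arcs — which are not the "fundamental circuit partner" of the preceding added vertex in $M$ — do not disturb this; but these $x_{2k} \in I^1$ satisfy $x_{2k-1} x_{2k} \in D(I)$ via type \ref{item: D(I) 1} as well only if... here I must be careful: actually every arc $x_{2k-1}x_{2k}$ is of type \ref{item: D(I) 1} since $x_{2k-1} \in E \setminus I$, so $x_{2k} \in C_M(x_{2k-1},I)$ regardless of whether $x_{2k} \in I^0$ or $I^1$, and the argument goes through uniformly.

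Then I would handle the $N$-side, which splits into the finitary part \ref{item: B} and the cofinitary part \ref{item: C}. For \ref{item: B}: the elements of $I$ removed by $P$ and relevant to $N$-spanning within $E_0$ are those $x_{2k} \in I^0$, exited by type \ref{item: D(I) 2} arcs, meaning $x_{2k} \in C_N(x_{2k+1}, I)$; chord-freeness gives the triangularity needed to apply Proposition \ref{prop: simult change} to $N \upharpoonright E_0$ (noting no $N$-circuit crosses between $E_0$ and $E_1$, so fundamental $N$-circuits of elements of $E_0$ stay in $E_0$), yielding that the $E_0$-part of $I \vartriangle P$ spans within $E_0$ exactly what $I + x_1$ does, where $x_1 \notin \mathsf{span}_N(I)$ by (i). For \ref{item: C}: this is the genuinely new ingredient. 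Here I would apply Proposition \ref{prop: simult change} to $N^* \upharpoonright \mathsf{span}_M(I)^1$ with base $\overset{\circ}{\mathsf{span}}_M(I)^1$ (a base of that restriction by Proposition \ref{prop: IE_1 common base}): the relevant additions are the $x_{2k+1} \in E_1$ entering via type \ref{item: D(I) 3} arcs from $x_{2k} \in I^1$, i.e. $x_{2k+1} \in C_{N^*}(x_{2k}, \overset{\circ}{\mathsf{span}}_M(I)^1) - x_{2k}$, so the roles of "added" and "removed" are swapped relative to the $M$-picture on $E_1$ — the $I^1$-elements removed from $I$ are being added to $\overset{\circ}{\mathsf{span}}_M(I)^1$ and vice versa. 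Chord-freeness again supplies triangularity, Proposition \ref{prop: simult change} gives $N^*$-independence of $\overset{\circ}{\mathsf{span}}_M(I)^1 \vartriangle P^1$ and that it spans the same set, and \ref{item: A} ensures $\mathsf{span}_M(I\vartriangle P)^1 = \mathsf{span}_M(I+x_{2n+1})^1$ so the ambient set hasn't changed in a way that breaks the statement. Finally, $N$-independence of $I \vartriangle P$ follows by combining: the $E_0$-part is $N$-independent by the \ref{item: B} analysis, the $E_1$-part is $N$-independent because \ref{item: C} gives its complement in $\mathsf{span}_M(I\vartriangle P)^1$ is $N^*$-spanning (rephrasing), hence $I \vartriangle P$ is $N$-independent; and dual safety of $I \vartriangle P$ is exactly the content that $(I \vartriangle P)^1 \subseteq \mathsf{span}_{N^*}(\overset{\circ}{\mathsf{span}}_M(I\vartriangle P))$, which I would extract from \ref{item: C} together with \ref{item: A}.

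The main obstacle I expect is bookkeeping the re-indexing for Proposition \ref{prop: simult change} simultaneously and consistently across all three matroids $M$, $N\upharpoonright E_0$, $N^* \upharpoonright \mathsf{span}_M(I)^1$ — in particular making sure that an element of $P$ that is "removed" from the $M$-perspective plays the correct (possibly "added") role from the $N^*$-perspective, and that the single element $x_1$ (resp. $x_{2n+1}$) which is unmatched on the $N$-side (resp. $M$-side) is tracked correctly, and that the chord-freeness condition \ref{item: no jumping} is invoked with the right arc type in each of the three triangularity verifications. Once the correspondence between path structure and the hypotheses of Proposition \ref{prop: simult change} is pinned down, each of \ref{item: A}, \ref{item: B}, \ref{item: C} is a direct application.
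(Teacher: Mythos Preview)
Your plan is exactly the paper's proof: three applications of Proposition~\ref{prop: simult change} (to $M$, to $N\upharpoonright E_0$, to $N^{*}$ on $\overset{\circ}{\mathsf{span}}_{M}(I)^{1}$), each time using the no-jump condition~\ref{item: no jumping} to supply the triangularity hypothesis, followed by the assembly of $N$-independence and dual safety from \ref{item: A}--\ref{item: C} just as you describe.

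One concrete slip in your bookkeeping for \ref{item: A}: you claim $x_{2k}\notin C_M(x_{2j-1},I)$ for $j>k$, arguing that the arc $x_{2j-1}x_{2k}$ would be forbidden by~\ref{item: no jumping}. But with $j>k$ that arc runs \emph{backwards} along the path, and~\ref{item: no jumping} only forbids forward jumps $x_a x_b$ with $a+1<b$; so this particular non-membership is not guaranteed. What Proposition~\ref{prop: simult change} actually needs (with the natural forward indexing $e_j=x_{2j-1}$, $f_j=x_{2j}$) is the \emph{transposed} condition $x_{2j}\notin C_M(x_{2k-1},I)$ for $k<j$, and this one does follow from~\ref{item: no jumping} since $x_{2k-1}x_{2j}$ would be a forward jump. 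The paper handles your anticipated ``main obstacle'' by taking the forward enumeration for \ref{item: A} and \ref{item: C} but the \emph{reverse} enumeration for \ref{item: B} (because there the $D(I)$-arc witnessing $f\in C_N(e,I)$ points from $f$ to $e$, reversing which direction of jump must be excluded).
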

\begin{proof}

The set $ I+x_{2n+1} $ is $ M $-independent by the definition of $ P $. Property \ref{item: no jumping} ensures that we can 
apply Proposition \ref{prop: simult change} with $I+x_{2n+1},\  e_j=x_{2j-1},\ f_j=x_{2j}\ (1\leq j\leq n) $ and $ M $ and 
conclude that 
$ I\vartriangle P\in \mathcal{I}_M $ and \ref{item: A} holds. 

To prove $ (I\vartriangle P)\cap E_0\in \mathcal{I}_N $ and 
\ref{item: B} we 
proceed similarly. We start with the $ N $-independent set $ (I+x_{1})\cap E_0 $.  In order to satisfy the premisses of 
Proposition \ref{prop: simult change} via property 
\ref{item: no jumping}, we need to enumerate the relevant edge 
pairs backwards. 
Namely, for $j\leq \left| I^{0}\cap P \right|$ let $ e_j:=x_{i_j+1} $ where  $ i_j $ is the $ j $th largest index with $ x_{i_j}\in 
I^{0} $ and $ f_j:=x_{i_j} $. We conclude that $ (I\vartriangle P)\cap E_0\in \mathcal{I}_N 
$ and \ref{item: B} 
holds. 

Finally, we let  $ e_j:=x_{i_j} $ for $ j\leq \left|I^{1}\cap P\right| $ where  $ i_j $ is the $j$th smallest  index with $ 
x_{i_j}\in I^{1} $ and $ 
f_j:=x_{i_j+1} $. 
Recall  that  $ \overset{\circ}{\mathsf{span}}_{M}(I)^{1} $ is $ N^{*} $-independent 
(see Proposition  \ref{prop: IE_1 common base}). 
We  apply Proposition \ref{prop: simult change} with 
$\overset{\circ}{\mathsf{span}}_{M}(I)^{1} ,\  e_j,\ f_j $ and $ N^{*} $ to conclude \ref{item: C}. This means that  $ 
\overset{\circ}{\mathsf{span}}_{M}(I)^{1}\vartriangle P^{1} $ is a base of $ N^{*}\upharpoonright  
\mathsf{span}_{M}(I)^{1}$ because $ \overset{\circ}{\mathsf{span}}_{M}(I)^{1} $ was a base 
of it by Proposition \ref{prop: IE_1 common base}.  By \ref{item: A} and by the definition of $ P $ we 
know that \[ (I\vartriangle P)\cap 
E_1\subseteq 
I^{1}\cup P^{1}\subseteq \mathsf{span}^{1}_{M}(I). 
\]  By combining these we obtain \[ (I\vartriangle P)\cap 
E_1\subseteq \mathsf{span}_{N^{*}}(\overset{\circ}{\mathsf{span}}_{M}(I)^{1}\vartriangle P^{1}). \] The set  $ 
I\vartriangle 
P  $ is disjoint from  
$ \overset{\circ}{\mathsf{span}}_{M}(I)\vartriangle P$ because $ I $ is disjoint 
from $ 
\overset{\circ}{\mathsf{span}}_{M}(I) $, moreover, $ \overset{\circ}{\mathsf{span}}_{M}(I)\vartriangle P $ contained in $ 
\mathsf{span}_{M}(I\vartriangle P) $. Hence  $ \overset{\circ}{\mathsf{span}}_{M}(I\vartriangle P)  $ 
contains $ 
\overset{\circ}{\mathsf{span}}_{M}(I)^{1}\vartriangle P^{1} $ and therefore $ N^{*} $-spans 
$ (I\vartriangle P) \cap E_1 $, i.e. $ I\vartriangle P $ is dually safe. It means that $ (I\vartriangle P)\cap E_1 $ is independent in $ 
N.\mathsf{span}_{M}(I\vartriangle P)^{1} $. 
Thus uniting   $ (I\vartriangle P)\cap E_0\in \mathcal{I}_N $ 
with $ (I\vartriangle P)\cap E_1 $ preserves $ N 
$-independence, i.e. $ I\vartriangle P\in \mathcal{I}_N $. 

\end{proof}
\begin{lem}\label{l: aug extend}
If $ I $ is a nice dually safe feasible set and $ P=\{ x_1,\dots, x_{2n+1} \} $ is an augmenting path for $ I $, then 
$ I \vartriangle P $ can be extended to a nice dually safe feasible set.
\end{lem}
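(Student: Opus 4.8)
The plan is to reduce the ``extendability'' of $I\vartriangle P$ to a nice dually safe feasible set to an application of Lemma \ref{l: enoughAddB} for a suitable minor, after we have identified the correct wave and common base to adjoin. Write $I':=I\vartriangle P$; by Proposition \ref{p: augpath} we already know that $I'$ is a dually safe element of $\mathcal{I}_M\cap\mathcal{I}_N$, together with the three ``span-equalities'' \ref{item: A}, \ref{item: B}, \ref{item: C}. The point of Lemma \ref{l: enoughAddB} is precisely that for a dually safe $J\in\mathcal{I}_M\cap\mathcal{I}_N$, if $B\in B(M/J,N/J,W(M/J,N/J))$ then $J\cup B$ is a nice dually safe feasible set. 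So it suffices to verify that $I'$ is dually safe (done) and then set $W':=W(M/I',N/I')$ and pick any $B'\in B(M/I',N/I',W')$, giving $I'\cup B'$ as the desired extension --- provided we know that $B(M/I',N/I',W')\neq\varnothing$, i.e. that $M/I'\upharpoonright W'$ admits an $N.W'$-independent base. But that is exactly the defining property of a wave: $W'$ being an $(M/I',N/I')$-wave means $M/I'\upharpoonright W'$ has an $N.W'$-independent base, so $B(M/I',N/I',W')$ is automatically nonempty.

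First I would recall that $I'=I\vartriangle P\in\mathcal{I}_M\cap\mathcal{I}_N$ is dually safe by Proposition \ref{p: augpath}. Next, I would invoke Proposition \ref{wave union} (existence of a largest wave) to let $W':=W(M/I',N/I')$, and observe that by definition of a wave there is an $N.W'$-independent base $B'$ of $M/I'\upharpoonright W'$, i.e. $B'\in B(M/I',N/I',W')$. Then I would apply Lemma \ref{l: enoughAddB} with $I'$ in place of $I$ and $B'$ in place of $B$: since $I'$ is a dually safe element of $\mathcal{I}_M\cap\mathcal{I}_N$ and $B'\in B(M/I',N/I',W(M/I',N/I'))$, the lemma yields that $I'\cup B'$ is a nice dually safe feasible set. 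Finally, since $I'\cup B'\supseteq I'=I\vartriangle P$, this $I'\cup B'$ is the required nice dually safe feasible extension of $I\vartriangle P$, completing the proof.

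The part that needs genuine care, rather than bookkeeping, is making sure the hypotheses of Lemma \ref{l: enoughAddB} really do apply to $I'=I\vartriangle P$ as opposed to only to $I$: we must use that $I'\in\mathcal{I}_M\cap\mathcal{I}_N$ and that $I'$ is dually safe, both of which are outputs of Proposition \ref{p: augpath} --- in particular statement \ref{item: C} is what makes $I'$ dually safe, via the inclusion $(I\vartriangle P)\cap E_1\subseteq\mathsf{span}_{N^{*}}(\overset{\circ}{\mathsf{span}}_M(I\vartriangle P)^{1})$ established at the end of that proof. I do not expect a serious obstacle here: once Proposition \ref{p: augpath} and Lemma \ref{l: enoughAddB} are in hand, the present lemma is essentially their composition, the only substantive observation being the triviality that a nonempty-by-definition set $B(M/I',N/I',W(M/I',N/I'))$ always contains a witness one can feed into Lemma \ref{l: enoughAddB}.
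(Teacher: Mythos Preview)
There is a genuine gap. You claim that $B(M/I',N/I',W')$ is ``nonempty by definition'' because $W'$ is a wave, but this confuses two different notions. By definition, $B(M,N,X)$ is the set of \emph{common bases} of $M\upharpoonright X$ and $N.X$; by contrast, a wave $W'$ only guarantees a base of $(M/I')\upharpoonright W'$ that is \emph{independent} in $N.W'$ --- not spanning in $N.W'$. An $N.W'$-independent base of $(M/I')\upharpoonright W'$ need not be a base of $N.W'$, so the wave property alone does not give $B(M/I',N/I',W')\neq\varnothing$. This is not a technicality: the paper introduces the condition $\mathsf{cond}(M,N)$ precisely to capture when such common bases exist, and devotes Lemma \ref{one more edge} to showing that a wave witness is actually spanning in $N.W$ under $\mathsf{cond}^{+}$.

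A second sign that something is missing: your argument never uses the hypothesis that $I$ is \emph{nice}, i.e.\ that $\mathsf{cond}^{+}(M/I,N/I)$ holds. The paper's proof uses this essentially. It first strips away loops and applies Lemmas \ref{l: wave modify} and \ref{l: minorsChanged} (exploiting the span-equalities \ref{item: A}, \ref{item: C} from Proposition \ref{p: augpath}) to transport the question of whether $B(M/I',N/I',W')\neq\varnothing$ to the matroid pair $(M/(I+x_{2n+1})-Y_0,\ N/(I+x_{2n+1})-Y_0)$, where $Y_0=\overset{\circ}{\mathsf{span}}_M(I)^{1}$. In that setting $\mathsf{cond}^{+}(M/I,N/I)$ (via Observation \ref{obs: cond+ loop delete}) lets one invoke Lemma \ref{one more edge} with the single new edge $e=x_{2n+1}$, which is exactly what forces the wave witness to be spanning in $N.W$ and hence yields the required common base. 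Without this chain of reductions and the niceness hypothesis, there is no reason for $B(M/I',N/I',W')$ to be nonempty.
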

\begin{proof}
Let $ M':= M/(I\vartriangle P)$ and $ N':=N/(I\vartriangle P) $.  By Lemma \ref{l: enoughAddB} it is enough to show that 
$ B(M', N', W)\neq \varnothing $ for $ 
W:=W(M', N') $.  Statements \ref{item: A} and \ref{item: B} of Proposition \ref{p: augpath}  ensure that the elements of  $ L:= 
\{ 
x_1, x_3,\dots, 
x_{2n-1} \}\cap E_0 $ are common 
loops of  $ M'$ and $ N' $.  Statement \ref{item: C} tells that $ Y_0:=\overset{\circ}{\mathsf{span}}_{M}(I)^{1} 
$ and $ 
Y_1:=\overset{\circ}{\mathsf{span}}_{M}(I)^{1}\vartriangle P^{1} $  have the same $ N^{*} 
$-span, furthermore, $ Y_1 $ is $ N^{*} $-independent, i.e. $ r(N.Y_1)=0 $. Note  that $ Y_1 $ consists of $ M' $-loops by 
\ref{item: A}.
Thus by applying Observation \ref{obs: common loops remove} with $ W $ and $ L $ and then Lemma \ref{l: wave modify} 
with  $ W\setminus L $  and $ Y_1 $  we can conclude  that $ W\setminus (L\cup Y_1) $ is an 
$ (M'-Y_1, N'-Y_1) $-wave, furthermore, 
\[ B(M',N', W)=B(M'-Y_1, N'-Y_1, W\setminus (L\cup Y_1)). \]
The sets $X_0:= I+x_{2n+1} $ and $X_1:= I\vartriangle P $ have the same $ M $-span (see  
Proposition \ref{p: augpath}/ \ref{item: A}). Recall 
that $ M'=M/X_1 $ and 
$ 
N'=N/X_1 $ by definition. Hence Lemma \ref{l: minorsChanged} 
ensures that $ W\setminus (Y_1\cup L) $ is also an $ (M/X_0-Y_0, M/X_0-Y_0) $-wave with 
\[ B(M/X_1-Y_1, N/X_1-Y_1, W\setminus (Y_1\cup L))=B(M/X_0-Y_0, M/X_0-Y_0, W\setminus (Y_1\cup L)).  \]
We have $ \mathsf{cond}^{+}(M/I, N/I) $ because $ I $ is a nice feasible set by assumption. Then by 
Observation \ref{obs: cond+ loop delete}
$ \mathsf{cond}^{+}(M/I-Y_0, N/I-Y_0) $ also holds. Applying Lemma \ref{one more edge} with $ M/I-Y_0,\  N/I-Y_0,\ 
W\setminus(Y_0\cup L)   $ and $ x_{2n+1}$ tells
$ B(M/X_0-Y_0, M/X_0-Y_0, W\setminus (Y_1\cup L))\neq \varnothing $ which completes the proof.
\end{proof}

\begin{lem}\label{l: arc remain lemma}
If  $ P=\{ x_1, \dots, x_{2n+1} \} $ is an augmenting path for $ I $ which contains neither  $x$ nor any of its out-neighbours in $ 
D(I) $, then $xy\in  D(I) $ implies $ xy \in D(I \vartriangle P) $.
\end{lem}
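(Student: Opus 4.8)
The plan is to analyze separately the three types of arcs in $D(I)$ according to the case definition, and in each case show that $I \vartriangle P$ produces the same witnessing circuit (or $N^*$-circuit) as $I$ did, using the hypothesis that $P$ is disjoint from $x$ and its out-neighbours together with the structural information recorded in Proposition \ref{p: augpath}. The key point is that each arc type is "read off" from a specific matroid object — $C_M(x,I)$, $C_N(y,I)$, or $C_{N^*}(x,\overset{\circ}{\mathsf{span}}_M(I)^1)$ — and we must check that none of these is disturbed by symmetric-differencing with $P$.

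First I would treat an arc of type \ref{item: D(I) 1}, so $x \in E\setminus I$ with $I+x$ being $M$-dependent and $y \in C_M(x,I)-x$. Since $x \notin P$ (as $x$ is not on the path) and no out-neighbour of $x$ lies on $P$, in particular $C_M(x,I)-x$ is disjoint from $P$, so $C_M(x,I) \cap P = \varnothing$. By Proposition \ref{p: augpath}\ref{item: A} we have $\mathsf{span}_M(I \vartriangle P) = \mathsf{span}_M(I+x_{2n+1}) \supseteq \mathsf{span}_M(I)$, hence $x \in \mathsf{span}_M(I \vartriangle P)$; and since the circuit $C_M(x,I) \subseteq I+x$ avoids $P$ entirely it is contained in $(I \vartriangle P)+x$, so it equals the unique circuit $C_M(x, I\vartriangle P)$. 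Thus $y \in C_M(x, I\vartriangle P)-x$ and the arc survives. (One must also note $x \in E \setminus (I\vartriangle P)$, which holds because $x\notin P$ and $x\notin I$.)

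Next, for an arc of type \ref{item: D(I) 2}, $x \in I^0$ and $C_N(y,I)$ is well-defined with $x \in C_N(y,I)$. Here $y$ is an out-neighbour of $x$, so $y \notin P$, and $x \notin P$; moreover the circuit $C_N(y,I)\subseteq I+y$ — I would argue $C_N(y,I) \cap P = \varnothing$, which needs care: the elements of $C_N(y,I)-y$ lie in $I$, so I must rule out that some $x_i \in I \cap C_N(y,I)$ lies on $P$. This is exactly where the hypothesis "$P$ contains none of the out-neighbours of $x$" must be leveraged — every element of $C_N(y,I)-y$ is an out-neighbour of $x$ when it lies in $I^0$ via a type \ref{item: D(I) 2} arc pointing from it... actually one has to be slightly more careful, and I expect this to be the main obstacle: ensuring that $P$ misses the entire relevant circuit, not just $x$ and its immediate out-neighbours. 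The resolution should be that it suffices for $P$ to miss $y$ and $x$: since $P$ is an augmenting path it alternates in/out of $I$ in a controlled way, and by Proposition \ref{p: augpath}\ref{item: B} the $N$-span on $E_0$ is essentially preserved, so $C_N(y, I \vartriangle P)$ is still well-defined and, because $x \in I \cap (I\vartriangle P)$ (as $x \notin P$) and $y \notin I\vartriangle P$, one shows $C_N(y,I\vartriangle P) = C_N(y,I) \ni x$ by uniqueness of fundamental circuits once $y$ is spanned. For type \ref{item: D(I) 3}, $x \in I^1$ and $y \in C_{N^*}(x, \overset{\circ}{\mathsf{span}}_M(I)^1)-x$: here I would invoke Proposition \ref{p: augpath}\ref{item: C}, which says $\overset{\circ}{\mathsf{span}}_M(I)^1 \vartriangle P^1$ has the same $N^*$-span as $\overset{\circ}{\mathsf{span}}_M(I)^1$ and is $N^*$-independent, hence equals $\overset{\circ}{\mathsf{span}}_M(I\vartriangle P)^1$ by Proposition \ref{prop: IE_1 common base} applied to the feasible set $I\vartriangle P$; since $x \notin P^1$ and $y \notin P$, the fundamental cocircuit $C_{N^*}(x, \cdot)$ is unchanged and contains $y$, so the arc persists. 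Throughout, the recurring technical burden — and the step I'd flag as the crux — is verifying that the relevant fundamental circuit/cocircuit is disjoint from $P$ using only the two stated exclusions on $x$ and its out-neighbours, which forces one to use the alternating structure of augmenting paths and property \ref{item: no jumping}.
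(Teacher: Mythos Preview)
Your treatment of the type~\ref{item: D(I) 1} arc matches the paper exactly, and your type~\ref{item: D(I) 3} argument, though over-engineered, can be made to work along the paper's simpler lines: the out-neighbours of $x\in I^{1}$ are precisely the elements of $C_{N^{*}}(x,\overset{\circ}{\mathsf{span}}_{M}(I)^{1})-x$, so the hypothesis already gives $P\cap C_{N^{*}}(x,\overset{\circ}{\mathsf{span}}_{M}(I)^{1})=\varnothing$, and the old cocircuit survives verbatim inside $\overset{\circ}{\mathsf{span}}_{M}(I\vartriangle P)^{1}+x$. (Your claimed equality $\overset{\circ}{\mathsf{span}}_{M}(I)^{1}\vartriangle P^{1}=\overset{\circ}{\mathsf{span}}_{M}(I\vartriangle P)^{1}$ is neither needed nor obviously true, since adding $x_{2n+1}$ can enlarge $\mathsf{span}_{M}^{1}$.)

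The genuine gap is in the type~\ref{item: D(I) 2} case. You correctly flag it as the crux but your proposed resolution fails. The assertion ``$C_N(y,I\vartriangle P)=C_N(y,I)$ by uniqueness of fundamental circuits'' is unjustified: the elements of $C_N(y,I)-y-x$ lie in $I$, and nothing in the hypothesis prevents some $x_{2k}\in P\cap I$ from belonging to $C_N(y,I)$. Such an $x_{2k}$ is \emph{not} an out-neighbour of $x$ (out-neighbours of $x\in I^{0}$ are edges $y'\notin I$ with $x\in C_N(y',I)$), so the hypothesis ``$P$ avoids the out-neighbours of $x$'' says nothing about it. When $x_{2k}\in C_N(y,I)$, that circuit is no longer contained in $(I\vartriangle P)+y$, and the new fundamental circuit $C_N(y,I\vartriangle P)$ may differ from the old one; what must be shown is that it still contains $x$.

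The paper handles this with an induction along the path. Writing $I_k:=I+x_1-x_2+\cdots+x_{2k+1}$, one maintains the invariant $x\in C_N(y,I_k)$. At the step $I_k\to I_{k+1}$, if $x_{2k+2}\notin C_N(y,I_k)$ the circuit is unchanged; if $x_{2k+2}\in C_N(y,I_k)$, one applies circuit elimination (Claim~\ref{Circuit elim}) with $C=C_N(y,I_k)$ and $C_{x_{2k+2}}=C_N(x_{2k+3},I_k)$. The crucial point is that $x\notin C_N(x_{2k+3},I)$, for otherwise $x_{2k+3}\in P$ would be an out-neighbour of $x$, contradicting the hypothesis. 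Property~\ref{item: no jumping} ensures $C_N(x_{2k+3},I_k)=C_N(x_{2k+3},I)$. Circuit elimination then yields a circuit through $x$ in $I_{k+1}+y$, which must be $C_N(y,I_{k+1})$. This idea --- that the hypothesis is used not to freeze the circuit but to guarantee the eliminating circuit at each step avoids $x$ --- is exactly what your sketch is missing.
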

\begin{proof}
Suppose that $ xy\in D(I) $. First we assume that $ x\notin I $. Then the set of the out-neighbours of $ x $ is  
 $ C_M(x,I)-x $. By 
assumption $ P\cap C_M(x,I)=\varnothing $ and 
therefore $C_M(x,I)\subseteq  I \vartriangle P$ thus $ C_M(x,I)=C_M(x,I \vartriangle P) $. This means by definition that $ x $ 
has the same out-neighbours in 
$ D(I) $ and $ D(I \vartriangle P) $.

 If $ x\in I^{1} $, then we can argue similarly.  The set of the out-neighbours of $ x $ in $ D(I) $ is 
$ C_{N^{*}}(x,\overset{\circ}{\mathsf{span}}_{M}(I)^{1} ) -x $. By 
assumption $ P\cap C_{N^{*}}(x,\overset{\circ}{\mathsf{span}}_{M}(I)^{1} )=\varnothing $ and 
therefore  $ C_{N^{*}}(x,\overset{\circ}{\mathsf{span}}_{M}(I)^{1} ) \cap  (I \vartriangle P)=\{ x\} $.  Since $ 
\mathsf{span}^{1}_{M}(I\vartriangle P) \supseteq 
\mathsf{span}^{1}_{M}(I)$ because of Proposition \ref{p: augpath}/\ref{item: A}, we also have $ 
C_{N^{*}}(x,\overset{\circ}{\mathsf{span}}_{M}(I)^{1} )\subseteq 
\mathsf{span}^{1}_{M}(I\vartriangle P) $. By combining these we conclude
 
 \[ C_{N^{*}}(x,\overset{\circ}{\mathsf{span}}_{M}(I\vartriangle P)^{1} ) 
 =C_{N^{*}}(x,\overset{\circ}{\mathsf{span}}_{M}(I)^{1} ). \] This means 
by definition that $ x $ has the same out-neighbours in $ D(I) $ and $ D(I \vartriangle P) $.

We turn to the case where $ x\in I^{0} $. By definition $ C_N(y,I) $ is well-defined and
contains $ x $,  in particular $ y\in E_0 $.
For $ k\leq n $, let us denote 
$ I+x_1-x_2+x_3-\hdots -x_{2k}+x_{2k+1} $ by $ I_k $. Note  that
$ {I_n=I \vartriangle P} $. We show by induction on $ k $ that $ I_k $ is $ N $-independent and
$ {x\in C_N(y, I_k)} $. Since
$ I+x_1 $ is $ N $-independent by definition and $ x_1\neq y $ because $ y\notin P$ by assumption, we obtain $ 
C_N(y,I)=C_N(y,I_0) $, thus for $ k=0 $ it holds.  Suppose that $ n>0 $ and
we already know the statement for some  $ k<n $. We have $ C_N(x_{2k+3},I_k)=C_N(x_{2k+3},I)\ni x_{2k+2} $ because 
there is no ``jumping arc''   in the augmenting path by property \ref{item: no jumping}.  It  follows via the $ N $-independence of 
$ I_k $ that  $ I_{k+1} $ is also $ N 
$-independent.
If $ x_{2k+2}\notin C_N(y, I_k) $ then $ C_N(y, I_k)=C_N(y, I_{k+1}) $ 
and the induction step is done. Suppose that   $ x_{2k+2}\in C_N(y, I_k) $. Then $x_{2k+2}, x_{2k+3}\in E_0  $, moreover, 
$x\notin C_N(x_{2k+3},I) $ since 
otherwise $P$ would contain the out-neighbour $x_{2k+3} $ of $ x $ in $ D(I) $. We apply  circuit elimination (Claim 
\ref{Circuit elim})
with  $C= C_N(y, I_k),\ e=x,\  X=\{ x_{2k+2} \},\  C_{x_{2k+2}}=C_N(x_{2k+3},I_k) $. The resulting circuit $ C'\ni x $ can 
have at most one element out of $ I_{k+1} $, namely $ y $. Since $ I_{k+1} $ is $ N $-independent, there must be  at least one 
such an element and therefore $ C'=C_N(y,I_{k+1}) $.
\end{proof}
\begin{obs}\label{arc remain fact}
If $ xy \in D(I) $ and  $ J\supseteq I $ is a dually safe feasible set with
$ \{ x,y \}\cap J=\{ x, y \}\cap I $, then $ xy \in D(J) $ (the same circuit is the witness).
\end{obs}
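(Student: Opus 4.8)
The plan is a case analysis according to which of the three clauses in the definition of $D(I)$ produces the arc $xy$, combined with the uniqueness of fundamental circuits. First I would record one monotonicity fact, needed only for the third clause: whenever $I\subseteq J$ and $J$ is $M$-independent (which holds here, as a feasible set lies in $\mathcal{I}_M\cap\mathcal{I}_N$), we have $\overset{\circ}{\mathsf{span}}_{M}(I)\subseteq\overset{\circ}{\mathsf{span}}_{M}(J)$; indeed no $z\in\mathsf{span}_{M}(I)\setminus I$ can belong to $J$, since then $I+z\subseteq J$ would be $M$-dependent.

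For clause \ref{item: D(I) 1}, where $x\in E\setminus I$ and $y\in C:=C_M(x,I)$, the hypothesis $\{x,y\}\cap J=\{x,y\}\cap I$ gives $x\notin J$, while $C-x\subseteq I\subseteq J$ shows that $C$ is an $M$-circuit through $x$ inside $J+x$; since $J$ is $M$-independent, uniqueness of the fundamental circuit forces $C_M(x,J)=C$, so $xy\in D(J)$ with the same witness. Clause \ref{item: D(I) 2} is handled symmetrically in $N$: from $x\in I^{0}$ and $x\in C:=C_N(y,I)$ we get $y\notin I$, hence $y\notin J$, and $C-y\subseteq I\subseteq J$, so uniqueness of the fundamental $N$-circuit over $J$ gives $C_N(y,J)=C\ni x$; as $x\in I^{0}\subseteq J^{0}$, again $xy\in D(J)$ with the same witness.

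Clause \ref{item: D(I) 3} is where the monotonicity fact enters, and I expect it to be the only step requiring a moment's care. Here $x\in I^{1}\subseteq J^{1}$ and $y\in C-x$ for $C:=C_{N^{*}}(x,\overset{\circ}{\mathsf{span}}_{M}(I)^{1})$, so $C-x\subseteq\overset{\circ}{\mathsf{span}}_{M}(I)^{1}\subseteq\overset{\circ}{\mathsf{span}}_{M}(J)^{1}$ by monotonicity, and $x\notin\overset{\circ}{\mathsf{span}}_{M}(J)^{1}$ since $x\in J$. Proposition \ref{prop: IE_1 common base}, applied to the dually safe feasible set $J$, tells us that $\overset{\circ}{\mathsf{span}}_{M}(J)^{1}$ is $N^{*}$-independent, so $C$ is the unique $N^{*}$-circuit through $x$ inside $\overset{\circ}{\mathsf{span}}_{M}(J)^{1}+x$, i.e. $C=C_{N^{*}}(x,\overset{\circ}{\mathsf{span}}_{M}(J)^{1})$, whence $xy\in D(J)$ with witness $C$. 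In all three cases the witnessing circuit is the one that already witnessed $xy\in D(I)$, which is precisely the claim; the only ingredients beyond bookkeeping are the monotonicity of $\overset{\circ}{\mathsf{span}}_{M}$ and the $N^{*}$-independence of its $E_1$-part supplied by Proposition \ref{prop: IE_1 common base}.
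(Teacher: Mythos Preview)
Your proposal is correct and is exactly the elaboration of the paper's parenthetical hint ``(the same circuit is the witness)''; the paper offers no further argument for this Observation, and your three-case analysis together with the monotonicity of $\overset{\circ}{\mathsf{span}}_{M}$ under $M$-independent extensions is precisely what that hint amounts to.
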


\section{Proof of the key-lemma}\label{s: proof of key-lemma}

\keylemma*
\begin{proof}
It is enough to build a  sequence $(I_n)$ of nice dually safe feasible sets  such that 
$ (\mathsf{span}_N(I_n)\cap E_0) $ is  an ascending sequence  exhausting $ E_0 $.   We fix a well-order $ \boldsymbol{\prec} $ 
of type $ \left|E_0\right| $ on  $ E_0 $. Let $ I_0=\varnothing $, which is a nice dually safe feasible set by  $ 
\mathsf{cond}^{+}(M,N) $. 
Suppose that  $ I_n $ is already defined. If there is no augmenting path for $ I_n $, then we let $ I_m:=I_n $ for $ m>n $. 
Otherwise we pick an augmenting path $ P_n $ for $ I_n $ in such a way that its first element is as $ \prec $-small as possible. 
Then we  apply Lemma \ref{p: augpath} to extend $ I\vartriangle P $  to a nice dually safe feasible set which we define to be $ 
I_{n+1} $. The recursion is done.

   Let $ \boldsymbol{X}:=E\setminus \bigcup_{n\in \mathbb{N}}\mathsf{span}_{N}(I_n) $ and for $ x\in X $, let $ 
   \boldsymbol{E(x,n)} $ be the set of elements that are 
reachable from $ x $ in $ D(I_n) $ by a directed path. We define $\boldsymbol{n_x} $ to be the smallest 
natural number such that  for every $ y\in E\setminus X $ with   $ y \prec x $ we have $ y\in \mathsf{span}_N(I_{n_x}) $.
We shall prove that  \[\boldsymbol{W}:= \bigcup_{x\in X}\bigcup_{n\geq n_x}E(x,n) \] is a wave.

\begin{lem}\label{l: stabilazing stuff}
For every $ x\in X $ and  $  \ell\geq m\geq n_x $, 
\begin{enumerate}
\item\label{item stabilize} $ I_m\cap  E(x,m)=I_{\ell}\cap E(x,m) $,
\item\label{item same circuit} $ C_M(y,I_\ell)=C_M(y,I_{m})\subseteq E(x,m)$ for every $y\in  E(x,m)\setminus I_m $, 
\item \label{item same cocircuit} $C_{N^{*}}(y,\overset{\circ}{\mathsf{span}}_{M}(I_\ell)^{1})=
C_{N^{*}}(y,\overset{\circ}{\mathsf{span}}_{M}(I_{m})^{1})  $ for every $y\in  E(x,m)\cap I_m^{1} $,
\item\label{item subdigraph} If $ yz\in  D(I_m)$ with $y,z\in E(x,m)$, then  $yz\in  D(I_\ell) $,
\item\label{item increasing}  $ E(x,m)\subseteq E(x,\ell) $.
\end{enumerate}
\end{lem}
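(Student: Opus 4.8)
The plan is to reduce the whole lemma to the single claim $(\star)$: for every $x\in X$ and every $\ell\ge n_x$ at which an augmenting path $P_\ell$ is selected, $P_\ell\cap E(x,\ell)=\varnothing$. Granting $(\star)$, I would run the five items by induction on $\ell\ge m$; the case $\ell=m$ is trivial (for item~\ref{item same circuit} one only needs that arcs of type~\ref{item: D(I) 1} leaving a vertex of $E(x,m)$ land in $E(x,m)$). For the step, first observe that the set $B_\ell$ with $I_{\ell+1}=(I_\ell\vartriangle P_\ell)\cup B_\ell$ also avoids $E(x,\ell)$: any $b\in B_\ell\cap E(x,\ell)$ lies outside $I_\ell\vartriangle P_\ell$, hence (since $P_\ell$ avoids $E(x,\ell)$) outside $I_\ell$, so the last arc of a directed path from $x$ to $b$ in $D(I_\ell)$ has type~\ref{item: D(I) 2} or~\ref{item: D(I) 3}; in the first case $b\in\mathsf{span}_N(I_\ell)\cap E_0\subseteq\mathsf{span}_N(I_\ell\vartriangle P_\ell)$ by Proposition~\ref{p: augpath}/\ref{item: B}, in the second $b\in\overset{\circ}{\mathsf{span}}_M(I_\ell)^1\subseteq\mathsf{span}_M(I_\ell\vartriangle P_\ell)$ by Proposition~\ref{p: augpath}/\ref{item: A}, and either way $b$ would be a loop of $M/(I_\ell\vartriangle P_\ell)$ or of $N/(I_\ell\vartriangle P_\ell)$, which $B_\ell$ cannot contain. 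Thus $P_\ell$ and $B_\ell$ are both disjoint from $E(x,\ell)\supseteq E(x,m)$, whence item~\ref{item stabilize} is immediate; items~\ref{item same circuit} and~\ref{item same cocircuit} hold because the fundamental circuit, resp.\ cocircuit, computed for $I_m$ already lies inside $E(x,m)$, so the changes (which add only loops and miss $E(x,m)$) leave it the fundamental one by uniqueness, using $\mathsf{span}_M(I_\ell)\subseteq\mathsf{span}_M(I_{\ell+1})$ and Proposition~\ref{prop: IE_1 common base}; item~\ref{item subdigraph} follows by applying Lemma~\ref{l: arc remain lemma} to pass from $D(I_\ell)$ to $D(I_\ell\vartriangle P_\ell)$ — legitimate since the out-neighbours in $D(I_\ell)$ of a vertex of $E(x,\ell)$ again lie in $E(x,\ell)$, which $P_\ell$ avoids — and then Observation~\ref{arc remain fact} to reach $D(I_{\ell+1})$; and item~\ref{item increasing} is item~\ref{item subdigraph} applied along a directed path.

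It remains to prove $(\star)$, where I would first handle $x\in E_1$. If $x\in E_1\cap X$ then $x\notin\mathsf{span}_M(I_n)$ for every $n$: otherwise there is a finite $M$-circuit $C\ni x$ with $C-x\subseteq I_n$, while, since $I_n+x$ is $N$-independent, the complement of an $N$-base extending it is $N^{*}$-spanning and misses $x$, so a finite $N$-cocircuit $D\ni x$ (finite because it lies in $E_1$) misses $I_n$; orthogonality of $C$ and $D$ then forces an element of $(C-x)\cap D\subseteq I_n\cap D=\varnothing$. Consequently $x$ has no out-arc (type~\ref{item: D(I) 1} would need $x\in\mathsf{span}_M(I_n)$) and no in-arc (type~\ref{item: D(I) 2} would need $x\in E_0$, type~\ref{item: D(I) 3} would need $x\in\overset{\circ}{\mathsf{span}}_M(I_n)^1$), so $E(x,n)=\{x\}$; and $x\notin P_\ell$ since $x\notin\mathsf{span}_N(I_{\ell+1})\supseteq I_\ell\vartriangle P_\ell$, so $(\star)$ is clear for such $x$.

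For $x\in E_0\cap X$ I would prove $(\star)$ by induction along $\prec$. Assume $P_\ell=\{x_1,\dots,x_{2n+1}\}$ meets $E(x,\ell)$ for some $\ell\ge n_x$. As before $x\notin P_\ell$, so if $x_i$ is the first vertex of $P_\ell$ lying in $E(x,\ell)$ then $x_{2n+1}\in E(x,\ell)$, and a shortest directed path $R$ from $x$ to $x_{2n+1}$ in $D(I_\ell)$ has no forward chord (being shortest) and — since membership in $I_\ell$ changes along each arc of $D(I_\ell)$ — alternates correctly and has odd length, hence is an augmenting path for $I_\ell$ whose first vertex is $x$ (using $x\in E_0\setminus\mathsf{span}_N(I_\ell)$ and $x_{2n+1}\in E_0\setminus\mathsf{span}_M(I_\ell)$). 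The $\prec$-minimal choice of the first vertex of $P_\ell$ gives $x_1\preceq x$. If $x_1=x$, then Proposition~\ref{p: augpath}/\ref{item: B} gives $x\in\mathsf{span}_N(I_\ell+x_1)\subseteq\mathsf{span}_N(I_{\ell+1})$, contradicting $x\in X$. If $x_1\prec x$ but $x_1\notin X$, then $x_1\in\mathsf{span}_N(I_{n_x})\subseteq\mathsf{span}_N(I_\ell)$ by the definition of $n_x$, contradicting that $x_1$ starts an augmenting path. So $x_1\in X$ and $x_1\prec x$; but $n_{x_1}\le n_x\le\ell$, so the inductive hypothesis for $x_1$ gives $P_\ell\cap E(x_1,\ell)=\varnothing$, contradicting $x_1\in P_\ell\cap E(x_1,\ell)$. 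This establishes $(\star)$, and with it the lemma.

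The main obstacle is the last paragraph: turning a suffix of $P_\ell$, prefixed by a directed path witnessing $x_i\in E(x,\ell)$, into a bona fide augmenting path starting at $x$ (the shortest-path choice being what guarantees chord-freeness and the alternating parity), and arranging the two inductions — along $\prec$ for $(\star)$, on $\ell$ for the five items — so that the $\prec$-minimal choice of first vertices and the thresholds $n_x$ genuinely close the argument. The remainder is bookkeeping with Lemma~\ref{l: arc remain lemma}, Observation~\ref{arc remain fact} and Proposition~\ref{p: augpath}.
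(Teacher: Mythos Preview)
Your overall approach matches the paper's: the claim $(\star)$ is precisely the paper's Proposition inside the proof, and once it is known the five items follow by the same bookkeeping with Lemma~\ref{l: arc remain lemma} and Observation~\ref{arc remain fact}. Your handling of the extension set $B_\ell$ via the type of the \emph{last in-arc} to $b$ is in fact cleaner than the paper's out-neighbour argument (though you should also cover the trivial case $b=x$, which has no in-arc but is excluded since $x\in X$ forces $x\notin I_m$ for all $m$). For $x\in E_0\cap X$ your $\prec$-induction is correct but unnecessary: the first vertex $x_1$ of any selected $P_\ell$ lands in $\mathsf{span}_N(I_{\ell+1})$ by Proposition~\ref{p: augpath}\ref{item: B}, hence $x_1\notin X$; so the third case of your trichotomy is vacuous and the argument closes in one step without the second induction. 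This is exactly what the paper means by ``impossible since $x\in X$ and $n\ge n_x$''.

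The genuine error is in your treatment of $x\in E_1\cap X$. You invoke ``orthogonality of $C$ and $D$'' where $C$ is an $M$-circuit and $D$ is an $N$-cocircuit, but circuit--cocircuit orthogonality is a statement about a \emph{single} matroid, not about two unrelated matroids $M$ and $N$; so the conclusion $x\notin\mathsf{span}_M(I_n)$, and with it $E(x,n)=\{x\}$, is unjustified. In the paper this case simply does not arise: the well-order $\prec$ is on $E_0$, so $n_x$ is only sensibly defined for $x\in E_0$, and the proof of $(\star)$ --- which needs to produce an augmenting path \emph{starting at $x$} --- already requires $x\in E_0$ by condition~(i) of the definition of augmenting path. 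The lemma is only used (and really only asserted) for $x\in X\cap E_0$; you should drop the $E_1$ paragraph rather than try to repair it.
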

\begin{proof}
Suppose that there is an $ n\geq n_x $ such that we know already the statement  whenever  $m,\ell \leq n $. For the induction step 
it is 
enough to 
show that the claim holds for $ n $ and $ n+1 $. We may assume that $ P_n $ exists, i.e. $ I_n\neq I_{n+1} $, since otherwise 
there is nothing to prove.

\begin{prop}\label{p: nx no aug}
 $ P_n \cap E(x,n)=\varnothing $.
\end{prop}
\begin{proof}
A common element of  $ P_n $ and $ E(x,n) $ would show that there is also an augmenting path in $ D(I_n) $ starting at $ x $ 
which is impossible since $ x\in X $ and  $ n\geq n_x $.
\end{proof}
\begin{cor}
$ I_{n}\cap  E(x,n)=(I_n \vartriangle P_n)\cap E(x,n) $.
\end{cor}
\begin{prop}
 $ (I_n \vartriangle P_n)\cap  E(x,n)=I_{n+1}\cap E(x,n) $.
\end{prop}
\begin{proof}
If $y\in  E(x,n)\setminus I_n  $, then its out-neighbours in $ D(I_n) $ are in $ E(x,n)\cap I_n $ and span $ y $ in $ M $. 
Thus  $I_{n+1}\setminus (I_n \vartriangle P_n)$ cannot contain any  edge from $ E(x,n) $.
\end{proof}
\begin{cor}\label{circ subset}
$ I_n\cap  E(x,n)=I_{n+1}\cap E(x,n) $  and  for every $y\in  E(x,n)\setminus I_n $ we have 
$ C_M(y,I_n)=C_M(y,I_{n+1})\subseteq E(x,n)$.
\end{cor}
\begin{cor}\label{cor: cocircuit stabil}
For $y\in  E(x,n)\setminus I_n $,  $ y $ has the same out-neighbours in $ D(I_n) $ and in $ D(I_{n+1}) $ and they span $ y $ in 
$ N^{*} $.  More concretely:
\[ C_{N^{*}}(y,\overset{\circ}{\mathsf{span}}_{M}(I_{n+1})^{1})=
C_{N^{*}}(y,\overset{\circ}{\mathsf{span}}_{M}(I_{n})^{1}). \]
\end{cor}

Finally,  for $ y\in  E(x,n)$, $ P_n $ does not contain $ y $ or any of its out-neighbours with respect 
to $ D(I_n) $  because $ P_n\cap E(x,n)=\varnothing  $. Hence by applying Lemma \ref{l: arc remain lemma} with $P_n,  y$ and 
$I_n $   (and then Observation \ref{arc remain fact}) we may conclude that  $ yz\in D(I_{n+1}) $ whenever   $yz\in  D(I_n) $.  
This implies $ E(x,n)\subseteq E(x,n+1) $  since reachability from $ x $ is witnessed by the same directed paths.
\end{proof}

Let \[ B:=\bigcup_{m\in \mathbb{N}} \bigcap_{n>m}W\cap I_n. \] We are going to show that $ B $ witnesses that $ W $ is a 
wave. Since $ M $ is
finitary the $ M $-independence of the sets  $ I_n\cap W $ implies the $ M $-independence of  $ B $. Similarly $ B^{0} $ is 
independent in $ N $ because $ N \upharpoonright E_0 $ is finitary.  Statements (\ref{item stabilize}) and 
(\ref{item same circuit}) of Claim \ref{l: stabilazing stuff} ensure $W\subseteq 
\mathsf{span}_{M}(B) $, while (\ref{item stabilize}) and (\ref{item same cocircuit}) guarantee $ B^{1} \subseteq 
\mathsf{span}_{N^{*}}(W\setminus B) $. The latter means that $ B^{1} $ is independent in $ N.(W^{1}\setminus B^{1}) $.  
Suppose for 
a contradiction that $ B^{0} $ is not independent in $ N.W^{0} $. Then there exists an $ N $-circuit $ C\subseteq E_0 $ that  
meets $ B $ 
but avoids $ W\setminus B $.  We already know that $ B^{0} $ is $ N $-independent thus $ C $ is not contained in $ B $. Hence
$C\setminus B= C\setminus W\neq\varnothing  $. Let us pick some $ e\in C\cap B$.
Since $ C $ is finite, for every large enough $ n $ we have $ C\cap B\subseteq C\cap I_n $ and  $ I_n $ spans  $ C $ in $ N $ (for 
the latter we use 
$ X\subseteq W\setminus B $). Applying Corollary \ref{cor: Noutgoing arc}
 with $I_n,  N, C $ and $ e $ tells that  $e\in C_N(f,I_n) $ 
for 
some $ f\in C\setminus W $ whenever $ n $ is large enough. 
Then by (\ref{item increasing}) of Claim \ref{l: stabilazing stuff} we can take an $ x\in X $ and an $ n\geq n_x $ such that $ e\in 
E(x,n) \cap C_N(f,I_n) $ 
for some $ f\in C\setminus W $. Then by definition $ f \in E(x,n) \subseteq W $ which contradicts $ f\in C\setminus W $. Thus  
$ B^{0} $ is indeed
independent in $ N.W^{0} $ and hence  $ B $ in $ N.W $ as well therefore $ W $ is a wave.
 
By $ \mathsf{cond}^{+}(M,N) $ we know that $ W $ consists of $ M $-loops and $ r(N.W)=0 $. It implies  $ r(N.X)=0 $ 
because $ X\subseteq W $ by definition. This means $ X\subseteq \mathsf{span}_N(E\setminus X) $. Since $X\subseteq E_0 $ 
and $ E_0 $ is the union of the finitary $ N $-components, $ X\subseteq \mathsf{span}_N(E_0\setminus X) $ follows. Thus for 
every $ x\in X $ 
there is a finite $ N $-circuit $ C\subseteq E_0 $ with $ C\cap X=\{ x \} $. The sequence $ (\mathsf{span}_N(I_n)\cap E_0 )$ is 
ascending by construction and exhausts $ E_0\setminus X $ by the definition of $ X $. As $ C-x\subseteq E_0\setminus X $ is  
finite, this implies that for every large enough $ n $, $ I_n $
spans $ C-x $ in $ N $  and hence spans $ x $ itself as well. But then by the definition of $ X $, we must have
$ X=\varnothing $. Therefore $ (\mathsf{span}_N(I_n)\cap E_0) $    exhausts $ E_0 $  and the proof of
Lemma \ref{l: key-lemma} is complete.
\end{proof}

 \section{An application: Degree-constrained orientations of infinite graphs}\label{s: application}
Matroid intersection is a powerful tool in graph theory and in combinatorial optimization. Our generalization 
Theorem \ref{t: main result} extends the scope of  its applicability to infinite 
graphs. To illustrate this, let us consider a classical problem in combinatorial optimization.  A graph is given with 
degree-constrains and we are looking for either an orientation 
that satisfies it or a certain substructure  witnessing the non-existence of such an orientation (see \cite{hakimi1965degrees}). 

Let  a  (possibly infinite) graph $ G=(V,E) $ be fixed through this section. We denote the set of edges incident with $ v $ by $ 
\boldsymbol{\delta(v)} $. 
Let $ o: V\rightarrow \mathbb{Z}$ with $ \left|o(n)\right|\leq 
d(v) $ for $ 
v\in V $ which we will threat as `lower bounds' for in-degrees in orientations in the following sense. We say that the orientation $ 
D $ of $ G $ is  
\emph{above} $ 
o $ at $ v $ if either $ 
o(v)\geq 0 $ and $ v $ has at least $ o(v) $ ingoing edges in $ D $ or  $ o(v)< 0 $ and all but at most $ -o(v) $ edges in $ \delta(v) 
$ 
are oriented towards $ v $ by $ D $. We say \emph{strictly above} if we forbid equality in the definition. Orientation $ D $ is 
above $ o $ if it is above $ o $ at every $ v\in V $. We say that $ D $ is (strictly) bellow $ o $ at $ v $ if the reverse of $ D $ is 
(strictly) above $ -o(v) $. Finally, $ D $ is (strictly) bellow $ o $ if the reverse of $ D $ is strictly above $ -o $.
 
\begin{thm}\label{t: indegree demand}
Let $ G=(V,E) $ be a countable graph and let $ o: V\rightarrow \mathbb{Z} $. If there is no 
orientation of $ G $ above $ o $, then there is a  $ V'\subseteq V $ and an orientation $ D $ of $ G $ such that
\begin{itemize}
\item $ D $ is bellow $ o $ at every $ v\in V' $;
\item There exists a $ v\in  V' $ such that $ D $ is strictly bellow $ o $ at $ v $; 
\item Every edge between $ V' $ and $ V\setminus V' $ is oriented by $ D $ towards $ V' $.
\end{itemize}
\end{thm}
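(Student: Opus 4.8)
The plan is to encode ``$D$ is above $o$'' as a matroid intersection instance on a countable ground set whose two matroids both lie in $\mathfrak{F}\oplus\mathfrak{F}^{*}$, to apply Theorem \ref{t: main result}, and then to read the required set $V'$ and orientation $D$ off the resulting common independent set and partition. For the encoding let $\hat{E}:=\{(e,v):e\in E,\ v\in e\}$ be the set of incidences and $\hat{E}_v:=\{(e,v):v\in e\}$; an orientation is the same as a set $T\subseteq\hat{E}$ meeting every pair $\{(e,u),(e,v)\}$ (for $e=uv\in E$) in exactly one element, where we read $(e,v)\in T$ as ``$e$ points away from $v$'', so that $d^{+}_D(v)=|T\cap\hat{E}_v|$ and $d^{-}_D(v)=|\hat{E}_v\setminus T|$. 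Put $N:=\bigoplus_{e=uv\in E}U_{\{(e,u),(e,v)\},1}$, the partition matroid whose blocks are the edges; it is finitary, and $T$ is $N$-independent and $N$-spanning exactly when it is a tail set of an orientation. For $v\in V$ let $M_v$ be the matroid on $\hat{E}_v$ encoding ``$D$ is above $o$ at $v$'': if $o(v)<0$ this says $d^{+}_D(v)\leq-o(v)$, so take $M_v:=U_{\hat{E}_v,-o(v)}$ (finitary); if $o(v)\geq0$ it says $|\hat{E}_v\setminus T|\geq o(v)$, so take $M_v:=(U_{\hat{E}_v,o(v)})^{*}$ (cofinitary, with all cocircuits of size $o(v)+1$). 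Then $M:=\bigoplus_v M_v\in\mathfrak{F}\oplus\mathfrak{F}^{*}$ and $N\in\mathfrak{F}$ on the countable set $\hat{E}$, and $T\subseteq\hat{E}$ is a common independent set of $M$ and $N$ spanning $N$ iff it is the tail set of an orientation above $o$.

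Apply Theorem \ref{t: main result} to get $I\in\mathcal{I}_M\cap\mathcal{I}_N$ and a partition $\hat{E}=E_M\sqcup E_N$ with $I_M:=I\cap E_M$ spanning $E_M$ in $M$ and $I_N:=I\cap E_N$ spanning $E_N$ in $N$. If $I$ spans $N$ then its tail set is an orientation above $o$, contradicting the hypothesis; so some edge $e_0=u_0v_0$ is \emph{unoriented}, i.e.\ $(e_0,u_0),(e_0,v_0)\notin I$. Set $V':=\{v\in V:\mathsf{span}_{M_v}(I_M\cap\hat{E}_v)=\hat{E}_v\}$. The key mechanical fact is that in a uniform matroid or its dual the span of a set is either that set itself or the whole ground set; from this (together with $M$-independence of $I$) one gets that $v\in V'$ forces $I\cap\hat{E}_v=I_M\cap\hat{E}_v\subseteq E_M$, and that $|\hat{E}_v\setminus I|=o(v)$ when $o(v)\geq0$, resp.\ $|I\cap\hat{E}_v|=-o(v)$ when $o(v)<0$. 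Two observations then drive the rest. First, both incidences of an unoriented edge lie in $E_M$ (otherwise $I_N$ would have to span one of them, and the only $N$-circuit meeting it is the block of that edge, forcing the other incidence into $I$), and since $I_M$ spans but does not contain those incidences, the span dichotomy puts both endpoints of an unoriented edge into $V'$ — in particular $V'\neq\varnothing$. Second, $V'$ is ``$I$-closed'': if $v\in V'$ and $(e,v)\in I$ with $e=vw$, then $(e,w)\in E_M\setminus I_M$ by the same argument, so $w\in V'$; equivalently, for $v\in V'$ and $w\notin V'$ no incidence $(e,v)$ with $e=vw$ lies in $I$.

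Now fix a well-order $\prec$ of $V$ and orient $G$ as follows: an edge with exactly one endpoint in $V'$ gets its tail at the endpoint \emph{not} in $V'$ (so it points towards $V'$); any other edge is given its tail at the endpoint whose incidence lies in $I$, if there is one ($N$-independence forbids both), and otherwise at its $\prec$-smaller endpoint. By $I$-closedness this rule is consistent with ``$(e,v)\in I\Rightarrow$ tail at $v$'' and orients every edge between $V'$ and $V\setminus V'$ towards $V'$. For $v\in V'$, each incidence of $I$ at $v$ becomes a tail, so the head set at $v$ lies in $\hat{E}_v\setminus I$: if $o(v)\geq0$ this set has size $o(v)$, hence $d^{-}_D(v)\leq o(v)$; if $o(v)<0$ then $d^{+}_D(v)\geq|I\cap\hat{E}_v|=-o(v)$; either way $D$ is below $o$ at $v$. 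Finally let $u$ be the $\prec$-smaller endpoint of $e_0$; then $u\in V'$ and $e_0$ receives tail $u$ while $(e_0,u)\notin I$, so either the tails at $u$ strictly exceed $I\cap\hat{E}_u$ (if $o(u)<0$), or the head set at $u$ is contained in $(\hat{E}_u\setminus I)\setminus\{e_0\}$, of size $o(u)-1$ (if $o(u)>0$; the case $o(u)=0$ is impossible, as then $\hat{E}_u\subseteq I$ contradicts $(e_0,u)\notin I$); in both cases $D$ is strictly below $o$ at $u$. This gives the desired $V'$ and $D$.

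The step I expect to be the real obstacle is the passage from the ``wave side'' $E_M$ of the intersection to the vertex set $V'$: one must choose $V'$ so that every unoriented edge sits inside it, so that it is $I$-closed — which is what makes it safe to turn all edges leaving $V'$ inwards without violating the out-degree bounds witnessed by $I$ — and so that the resulting head-versus-tail count yields the in- and out-degree inequalities uniformly, in particular at vertices of infinite degree. It is exactly this last requirement that forces one to encode the constraint at a vertex with $o(v)\geq0$ by the cofinitary matroid $(U_{\hat{E}_v,o(v)})^{*}$ rather than by a (vacuous) out-degree cap; once the encoding is right, the remaining verifications are routine translation between orientations and these incidence matroids.
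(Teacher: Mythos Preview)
Your proof is correct and follows the same strategy as the paper: encode orientations as common independent sets of a partition matroid $N$ (one element per edge) and a direct sum $M=\bigoplus_v M_v$ of uniform/co-uniform vertex matroids, apply Theorem~\ref{t: main result}, and read off $V'$ and $D$ from the resulting $(I,E_M,E_N)$. The only differences are that you encode via tail-incidences where the paper uses in-arcs (so your $M_v$'s are the duals of the paper's, and your $V'$ is the side where $I_M$ \emph{does} span $M_v$, which is the paper's $V''$), and you locate the strictly-below vertex explicitly at the $\prec$-smaller endpoint of an unoriented edge, whereas the paper argues by contradiction that if no $v\in V'$ were strictly below then $D$ would be above $o$ everywhere.
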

\begin{proof}
 Without loss of generality we may assume that $ G $ is loopless. We define the digraph $ 
 \overset{\leftrightarrow}{G}=(V, A) $ by 
 replacing each $ e\in E 
 $ by back  and forth arcs $ a_{e}, a'_e $ between the 
 end-vertices of $ e $. Let $ \delta^{+}(v) $ be the set of the ingoing edges of $ v $ in  $ \overset{\leftrightarrow}{G} $.  For 
 $ v\in V $, let $ M_v $ be $ 
 U_{\delta^{+}(v), o(v)} $ if $ 
 o(v)\geq 0 $ and $ 
U_{\delta^{+}(v),-o(v)}^{*} $ if $ o(v)<0 $. We define $ N_e $ to be $ U_{\{a_e, a'_e  \}, 1} $ for $ e\in E $. Let \[ 
M:=\bigoplus_{v\in V}M_v\text{ and }N:=\bigoplus_{e\in E} N_e. \] Since $ M, N\in 
(\mathfrak{F}\oplus \mathfrak{F}^{*})(A) $,  Theorem \ref{t: main result} guarantees that there exists an $ I\in 
\mathcal{I}_M\cap 
\mathcal{I}_N $ and a partition $ A=A_M\sqcup A_N $  such that $I_M:=I\cap A_M $ spans $ A_M $ in $ M $ and  $ 
I_N:=I\cap A_N $ spans $ A_N $ in $ N $.  Note that  $ \left|I\cap \{ a_e, a'_e \} \right|\leq 1$ by the $ N $-independence of $ I 
$. We define $ D $ by taking the orientation $ a_e $ of $ e $ if $ a_e\in I $ and $ a'_e $ otherwise. Let $ V'' $ consists of those 
vertices $ v $ for which $ I_M $ contains a base of $ M_v $ and let $ V':=V\setminus V'' $.
We claim that whenever an edge $ e\in E $ is incident with some $ v\in V' $, then $ I $ contains one of $ a_e $ and $ a'_e $. 
Indeed, if $ I $ contains none of them then they cannot be $ N $-spanned by $ I_N $ thus they are $ M $-spanned by $ M $ which 
implies that both end-vertices of $ e $ belong to $ V'' $, contradiction. Thus if $ e $ is incident some $ v\in V' $, then all ingoing 
arcs of $ v $ in $ D $ must be in $ I $. Then the $ M $-independence of $ I $ ensures that  $ D  $ is bellow $ o $ at $ v $.

Suppose for a contradiction that $ a_e $ is an arc in $ D $ from a $ v\in V' $ to a $ w\in V'' $. As we have already shown, we must 
have $ a_e\in I $ . By $ w\in V'' 
$ we know that $ I_M $ contains a base of 
$ M_w $ thus $ a_e\notin I_N $ by the $ M $-independence of $ I $ and therefore $ a_e\in I_M $. But then $ a'_e $ cannot be 
spanned by $ I_N $ in $ N $ hence $ a'_e\in \mathsf{span}_M(I_M) $, which means that $ I_M $ contains a base of $ M_v $ 
contradicting $ v\in V' $. We conclude that all the edges between $ V'' $ and $ V' $ are oriented towards $ V' $ in $ D $. By the 
definition of $ V'' $,  $ D $ is above $ o $ at every $ w\in V'' $. If $ D $ is also above $ o $ for every $ v\in V' $, then $ D $ 
is above $ o $. Otherwise there exists a $v\in  V' $ such that $ D $ is strictly bellow $ o $ at $ v $, but then $ V' $ is as desired.
\end{proof}

Easy calculation shows that if $ G $ is finite, then  the existence of a $ V' $ described in Theorem \ref{t: indegree demand} 
implies the non-existence of an orientation above $ o $. Indeed, the total demand by $ o $ on $ V' $ is more than the number of all 
the edges that are incident with a vertex in $ V' $. That is why for finite $ G $, ``if'' can be replaced by ``if and only if'' in 
Theorem \ref{t: indegree demand}. For an infinite $ G $ it is not always the case. Indeed, let $ G $ be the one-way infinite path $ 
v_0, v_1,\dots $ and let $ o(v_n)=1 $ for $ n\in \mathbb{N} $. Then orienting edge $ \{ v_{n}, v_{n+1} \} $ towards $ v_{n+1} 
$ for each $ n\in \mathbb{N} $ and taking $ V':=V $ satisfies the three points in Theorem \ref{t: indegree demand}. However, 
taking the opposite orientation is above $ o $.

A natural next step is to  introduce upper bounds $ p:V\rightarrow \mathbb{Z}  $ beside the lower bounds 
$ o:V\rightarrow \mathbb{Z}  $. To avoid trivial obstructions we assume that $ o $ and $ p $ are \emph{consistent} which means 
that for every $ v\in V $ there is an orientation $ D_v $ 
which is above $ o $ at bellow $ p $ at $ v $.

\begin{quest}
Let $ G $ be a countable graph and let $ o, p: V\rightarrow \mathbb{Z} $ be a consistent pair of bounding functions.  
Suppose that there are orientations $ D_o $ and $ D_p $  that are above $ o $ and bellow $ p $ respectively. Is there always a 
single 
orientation $ D $ which is above $ o $ and bellow $ p $?
\end{quest}

The positive answer for finite graphs is not too hard to prove,  as far we know its first appearance in the literature is 
\cite{frank1978orient}.

% \bib, bibdiv, biblist are defined by the amsrefs package.
\begin{bibdiv}
\begin{biblist}

\bib{aharoni1984konig}{article}{
      author={Aharoni, Ron},
       title={K{\"o}nig's duality theorem for infinite bipartite graphs},
        date={1984},
     journal={Journal of the London Mathematical Society},
      volume={2},
      number={1},
       pages={1\ndash 12},
}

\bib{aharoni2009menger}{article}{
      author={Aharoni, Ron},
      author={Berger, Eli},
       title={Menger’s theorem for infinite graphs},
        date={2009},
     journal={Inventiones mathematicae},
      volume={176},
      number={1},
       pages={1\ndash 62},
}

\bib{aharoni1983general}{article}{
      author={Aharoni, Ron},
      author={Nash-Williams, Crispin},
      author={Shelah, Saharon},
       title={A general criterion for the existence of transversals},
        date={1983},
     journal={Proceedings of the London Mathematical Society},
      volume={3},
      number={1},
       pages={43\ndash 68},
}

\bib{aharoni1984another}{article}{
      author={Aharoni, Ron},
      author={Nash-Williams, Crispin},
      author={Shelah, Saharon},
       title={Another form of a criterion for the existence of transversals},
        date={1984},
     journal={Journal of the London Mathematical Society},
      volume={2},
      number={2},
       pages={193\ndash 203},
}

\bib{aharoni1998intersection}{article}{
      author={Aharoni, Ron},
      author={Ziv, Ran},
       title={The intersection of two infinite matroids},
        date={1998},
     journal={Journal of the London Mathematical Society},
      volume={58},
      number={03},
       pages={513\ndash 525},
}

\bib{aigner2018intersection}{article}{
      author={Aigner-Horev, Elad},
      author={Carmesin, Johannes},
      author={Fr{\"o}hlich, Jan-Oliver},
       title={On the intersection of infinite matroids},
        date={2018},
     journal={Discrete Mathematics},
      volume={341},
      number={6},
       pages={1582\ndash 1596},
}

\bib{nathanhabil}{thesis}{
      author={Bowler, Nathan},
       title={Infinite matroids},
        type={Habilitation Thesis},
        date={2014},
  note={\url{https://www.math.uni-hamburg.de/spag/dm/papers/Bowler\_Habil.pdf}},
}

\bib{bowler2015matroid}{article}{
      author={Bowler, Nathan},
      author={Carmesin, Johannes},
       title={Matroid intersection, base packing and base covering for infinite
  matroids},
        date={2015},
     journal={Combinatorica},
      volume={35},
      number={2},
       pages={153\ndash 180},
}

\bib{bowler2016self}{article}{
      author={Bowler, Nathan},
      author={Geschke, Stefan},
       title={Self-dual uniform matroids on infinite sets},
        date={2016},
     journal={Proceedings of the American Mathematical Society},
      volume={144},
      number={2},
       pages={459\ndash 471},
}

\bib{bruhn2013axioms}{article}{
      author={Bruhn, Henning},
      author={Diestel, Reinhard},
      author={Kriesell, Matthias},
      author={Pendavingh, Rudi},
      author={Wollan, Paul},
       title={Axioms for infinite matroids},
        date={2013},
     journal={Advances in Mathematics},
      volume={239},
       pages={18\ndash 46},
}

\bib{edmonds1968matroid}{article}{
      author={Edmonds, Jack},
       title={Matroid partition},
        date={1968},
     journal={Mathematics of the Decision Sciences},
      volume={11},
       pages={335\ndash 345},
}

\bib{edmonds2003submodular}{incollection}{
      author={Edmonds, Jack},
       title={Submodular functions, matroids, and certain polyhedra},
        date={2003},
   booktitle={Combinatorial optimization—eureka, you shrink!},
   publisher={Springer},
       pages={11\ndash 26},
}

\bib{erde2019base}{article}{
      author={Erde, Joshua},
      author={Gollin, J.~Pascal},
      author={Jo{\'o}, Attila},
      author={Knappe, Paul},
      author={Pitz, Max},
       title={Base partition for mixed families of finitary and cofinitary
  matroids},
        date={2021},
        ISSN={1439-6912},
     journal={Combinatorica},
      volume={41},
      number={1},
       pages={31\ndash 52},
         url={https://doi.org/10.1007/s00493-020-4422-4},
}

\bib{frank1978orient}{article}{
      author={Frank, Andr{\'a}s},
       title={How to orient the edges of a graph?},
        date={1978},
     journal={Combinatorics},
       pages={353\ndash 364},
}

\bib{frank2011connections}{book}{
      author={Frank, Andr{\'a}s},
       title={Connections in combinatorial optimization},
   publisher={OUP Oxford},
        date={2011},
      volume={38},
}

\bib{ghaderi2017}{thesis}{
      author={Ghaderi, Shadisadat},
       title={On the matroid intersection conjecture},
        type={PhD. Thesis},
        date={2017},
}

\bib{hakimi1965degrees}{article}{
      author={Hakimi, S~Louis},
       title={On the degrees of the vertices of a directed graph},
        date={1965},
     journal={Journal of the Franklin Institute},
      volume={279},
      number={4},
       pages={290\ndash 308},
}

\bib{higgs1969equicardinality}{article}{
      author={Higgs, DA},
       title={Equicardinality of bases in {B}-matroids},
        date={1969},
     journal={Can. Math. Bull},
      volume={12},
       pages={861\ndash 862},
}

\bib{higgs1969matroids}{inproceedings}{
      author={Higgs, Denis~Arthur},
       title={Matroids and duality},
        date={1969},
   booktitle={Colloquium mathematicum},
      volume={2},
       pages={215\ndash 220},
}

\bib{joó2020intersection}{misc}{
      author={Joó, Attila},
       title={Intersection of a partitional and a general infinite matroid},
         how={arXiv},
        date={2020},
        note={\url{https://arxiv.org/abs/2009.07205}},
}

\bib{joo2020MIC}{article}{
      author={Jo{\'o}, Attila},
       title={Proof of {N}ash-{W}illiams' intersection conjecture for countable
  matroids},
        date={2021},
        ISSN={0001-8708},
     journal={Advances in Mathematics},
      volume={380},
       pages={107608},
}

\bib{oxley1978infinite}{article}{
      author={Oxley, James},
       title={Infinite matroids},
        date={1978},
     journal={Proc. London Math. Soc},
      volume={37},
      number={3},
       pages={259\ndash 272},
}

\bib{oxley1992infinite}{article}{
      author={Oxley, James},
       title={Infinite matroids},
        date={1992},
     journal={Matroid applications},
      volume={40},
       pages={73\ndash 90},
}

\bib{rado1966abstract}{inproceedings}{
      author={Rado, Richard},
       title={Abstract linear dependence},
organization={Institute of Mathematics Polish Academy of Sciences},
        date={1966},
   booktitle={Colloquium mathematicum},
      volume={14},
       pages={257\ndash 264},
}

\end{biblist}
\end{bibdiv}

%\bibliographystyle{acm}
%\bibliography{C:/Users/Attila/Desktop/Dropbox/tex/bibfile.bib}
\end{document}